\tikzset{mynode/.style={
 ellipse,draw,
 inner xsep=-1pt, color=red
 }}
\numberwithin{equation}{section}
\newtheorem{Theorem}{Theorem}[section]
\newtheorem*{Theorem*}{Theorem}
\newtheorem{Lemma}[Theorem]{Lemma}
\newtheorem{Corollary}[Theorem]{Corollary}
\newtheorem{Proposition}[Theorem]{Proposition}
\theoremstyle{definition}
\newtheorem{Definition}[Theorem]{Definition}
\newtheorem{Remark}[Theorem]{Remark}
\newtheorem*{Example*}{Example}
\newtheorem{Notation}[Theorem]{Notation}
\newcommand{\Cset}{\mathbb{C}}
\newcommand{\Nset}{\mathbb{N}}
\newcommand{\Rset}{\mathbb{R}}
\newcommand{\Zset}{\mathbb{Z}}
\newcommand{\cA}{\mathcal{A}}
\newcommand{\cB}{\mathcal{B}}
\newcommand{\cC}{\mathcal{C}}
\newcommand{\cD}{\mathcal{D}}
\newcommand{\cI}{\mathcal{I}}
\newcommand{\cL}{\mathcal{L}}
\newcommand{\cM}{\mathcal{M}}
\newcommand{\cN}{\mathcal{N}}
\newcommand{\tvarphi}{\tilde{\varphi}}
\newcommand{\calM}{\mathcal{M}}
\newcommand{\tcM}{\widetilde{\mathcal{M}}}
\newcommand{\tpsi}{\widetilde{\psi}}
\newcommand{\talpha}{\widetilde{\alpha}}
\newcommand{\tbeta}{\widetilde{\beta}}
\newcommand{\tgamma}{\widetilde{\gamma}}
\newcommand{\tiota}{\widetilde{\iota}}
\newcommand{\trho}{\widetilde{\rho}}
\newcommand{\hcM}{\widehat{\mathcal{M}}}
\newcommand{\hpsi}{\widehat{\psi}}
\newcommand{\halpha}{\widehat{\alpha}}
\newcommand{\hbeta}{\widehat{\beta}}
\newcommand{\hgamma}{\widehat{\gamma}}
\newcommand{\hiota}{\widehat{\iota}}
\newcommand{\1}{\mathbbm{1}}
\newcommand{\Aut}{\operatorname{Aut}}
\newcommand{\End}{\operatorname{End}}
\newcommand{\trace}{\operatorname{tr}}
\newcommand{\gen}{\operatorname{gen}}
\newcommand{\sh}{\operatorname{sh}}
\newcommand{\Id}{\operatorname{Id}}
\begin{document}
\allowdisplaybreaks

\newcommand{\arXivNumber}{2204.03595}

\renewcommand{\thefootnote}{}

\renewcommand{\PaperNumber}{083}

\FirstPageHeading

\ShortArticleName{Markovianity and the Thompson Group $F$}

\ArticleName{Markovianity and the Thompson Group $\boldsymbol{F}$\footnote{This paper is a~contribution to the Special Issue on Non-Commutative Algebra, Probability and Analysis in Action. The~full collection is available at \href{https://www.emis.de/journals/SIGMA/non-commutative-probability.html}{https://www.emis.de/journals/SIGMA/non-commutative-probability.html}}}

\Author{Claus K\"OSTLER~$^{\rm a}$ and Arundhathi KRISHNAN~$^{\rm b}$}\vspace{1.3mm}

\AuthorNameForHeading{C.~K\"ostler and A.~Krishnan}

\Address{$^{\rm a)}$~School of Mathematical Sciences, University College Cork, Cork, Ireland}
\EmailD{\href{mailto:claus@ucc.ie}{claus@ucc.ie}}\vspace{1.3mm}

\Address{$^{\rm b)}$~Department of Pure Mathematics, University of Waterloo, Ontario, Canada}
\EmailD{\href{mailto:arundhathi.krishnan@uwaterloo.ca}{arundhathi.krishnan@uwaterloo.ca}}\vspace{1.3mm}

\ArticleDates{Received April 08, 2022, in final form October 07, 2022; Published online October 27, 2022}\vspace{1.3mm}

\Abstract{We show that representations of the Thompson group $F$ in the automorphisms of a noncommutative probability space yield a large class of bilateral stationary noncommutative Markov processes. As a partial converse, bilateral stationary Markov processes in tensor dilation form yield representations of $F$. As an application, and building on a~result of K\"ummerer, we canonically associate a representation of $F$ to a bilateral stationary Markov process in classical probability.\vspace{1.3mm}}

\Keywords{noncommutative stationary Markov processes; representations of Thompson group $F$}\vspace{1.3mm}

\Classification{46L53; 60J05; 60G09; 20M30}\vspace{2mm}

\renewcommand{\thefootnote}{\arabic{footnote}}
\setcounter{footnote}{0}

\section{Introduction}\vspace{2mm}

The Thompson group $F$ was introduced
by Richard Thompson in the 1960s and many of its unusual,
interesting properties
\cite{CF11, CFP96} have been deeply studied over the past
decades, in particular due to the still open conjecture of its
nonamenability. Recently Vaughan Jones provided a new approach to the construction of (unitary) representations of the
Thompson group $F$ which is motivated by the link between
subfactor theory and conformal field theory (see \cite{AJ21,BJ19b,BJ19a,Jo17,Jo18a,Jo18b}). Independently,
another approach to the representation theory of the Thompson
group $F$ is motivated
by recent progress in the study of distributional invariance
principles and symmetries in noncommutative probability (see \cite{EGK17,Ko10} and \cite[Introduction]{KKW20}). More
precisely, a close relation between certain representations of
the Thompson monoid $F^+$ and unilateral noncommutative
stationary Markov processes is established in \cite{KKW20}.
The goal of the present paper is to demonstrate that this
connection appropriately extends to one between
representations of the Thompson group $F$ and bilateral
stationary noncommutative Markov processes (in the sense of K\"ummerer \cite{Ku85}).
Throughout we will mainly focus on a conceptual framework that is relevant in the operator algebraic reformulation of stationary Markov processes in classical probability theory.\looseness=1

One of our main results is Theorem \ref{theorem:markov-filtration-1}
which is about the construction of a local Markov filtration
and a bilateral stationary Markov process from a given representation of the Thompson group $F$.
Going beyond the framework of Markovianity, this construction is further deepened in Theorem \ref{theorem:markov-filtration-2} and Corollary \ref{corollary:triangulararray},
to obtain rich triangular arrays of commuting squares.
A~main result in the converse direction is Theorem \ref{theorem:TensorMarkovF}, where we provide a canonical construction of a representation of the Thompson group $F$ from a given bilateral stationary noncommutative Markov process in tensor dilation form. Finally, we apply this canonical construction to bilateral stationary Markov processes in classical probability. We establish in Theorem \ref{theorem:F-gen-compression} that, for a given Markov transition operator, there exists a representation of the Thompson group $F$ such that this Markov transition operator
is the compression of a represented generator of the Thompson group $F$.

We keep the presentation of our results on the connection between representations of the Thompson group $F$ and Markovianity as close as possible to our treatment for the Thompson monoid $F^+$ in \cite{KKW20}. Here we focus on the dynamical systems approach for noncommutative stationary processes and deliberately omit reformulations in terms of noncommutative random variables. In parts this is attributed to the fact that usually
the noncommutative probability space generated by a bilateral stationary Markov sequence of noncommutative random variables turns out to be ``too small'' to accommodate
a representation of the Thompson group $F$. This is in contrast to the situation in \cite{KKW20}, where unilateral stationary
Markov sequences generate a noncommutative probability space which is large enough to support a representation of the Thompson monoid $F^+$. Some of these conceptual differences are further discussed and illustrated in the closing Section~\ref{subsection:discuss-classical}.
Therein we constrain ourselves to the basics of the construction of representations of the Thompson group $F$ from a given Markov transition operator and postpone a more-in-depth structural discussion to the future.

Let us outline the content of this paper. Section~\ref{section:Preliminaries} starts with providing
definitions, notation and some background results on the
Thompson group $F$ (see Section~\ref{subsection:basics-on-F}). The basics of noncommutative
probability spaces and Markov maps are given in Section~\ref{subsection:Markov-maps}.
We review in Section~\ref{subsection:Markovianity} the notion of
commuting squares from subfactor theory, as it underlies the present concept of Markovianity in noncommutative probability. Furthermore, we provide the notion of a local Markov filtration which allows us to define Markovianity
on the level of von Neumann subalgebras without any reference to noncommutative random variables. Finally, we review some results on noncommutative stationary processes in Section~\ref{subsection:Noncommutative Stationary Processes}. Here we will meet bilateral noncommutative stationary Markov processes and Markov dilations in the sense of K\"ummerer~\cite{Ku85} as well as bilateral noncommutative stationary Bernoulli shifts.

We investigate in Section~\ref{section:Mark-From-Rep} how representations of the Thompson group
$F$ in the automorphisms of noncommutative probability spaces yield bilateral noncommutative stationary Markov processes. Section~\ref{subsection:generating} introduces the generating property of representations of $F$ in
Definition~\ref{definition:generating}. This property ensures that the fixed point algebras of the
represented generators of $F$ form a tower which generates the noncommutative probability
space, see Proposition~\ref{proposition:generating-property}. This tower of fixed
point algebras equips the noncommutative probability space with a filtration which, using actions
of the represented generators, can be further upgraded to become a local Markov filtration.
Section~\ref{subsection:Markov-F} considers certain noncommutative stationary processes which are adapted to this local Markov filtration.

The closing Section \ref{section:Reps-of-F-from-Mark} shows that representations of $F$ can be obtained from an important class of bilateral stationary noncommutative Markov processes. To be more precise, in Section~\ref{subsection:Example} we provide elementary constructions of the Thompson group $F$ in the automorphisms of a tensor product von Neumann algebra. This extends the representation of the Thompson monoid $F^+$ obtained in \cite{KKW20} and also provides examples of bilateral noncommutative Markov and Bernoulli shifts. We show in Section~\ref{subsection:constr-rep-F} that Markov processes in tensor dilation form give rise to representations of $F$. Finally, in Section~\ref{subsection:constr-classical} we use a result of K\"ummerer to show that, given a bilateral stationary Markov process
in the classical case, we can obtain representations of $F$ such that the associated transition operator is the compression of a represented generator of $F$. We provide more details to further motivate the construction of these representations in Section~\ref{subsection:discuss-classical}, also pointing out differences between the unilateral and bilateral cases in the process.\looseness=1

\section{Preliminaries} \label{section:Preliminaries}
\subsection[The Thompson group F]{The Thompson group $\boldsymbol{F}$}
\label{subsection:basics-on-F}

The Thompson group $F$, originally introduced by Richard Thompson in 1965 as a certain group of piece-wise linear homeomorphisms on the interval $[0,1]$, is known to have the infinite
presentation
\begin{equation*}
F:=\langle g_0,g_1,g_2,\ldots \mid g_{k}g_{\ell}=g_{\ell+1}g_{k}
\text{ for } 0\leq k<\ell <\infty \rangle.
\end{equation*}
We note that we work throughout with
generators $g_k$ which correspond to the
inverses of the generators usually used in the literature (e.g.,~\cite{Be04}).
Let $e \in F$ denote the neutral element.
As it is well-known, $F$ is finitely generated with $F =
\langle g_0, g_1 \rangle$. Furthermore,
as shown for example in \cite[Theorem 1.3.7]{Be04}, an element $e \neq g \in F$ has the unique normal form
\begin{align} \label{eq:F-normal-form}
g = g_0^{-b_0}\cdots g_k^{-b_k} g_k^{a_k}\cdots g_0^{a_0},
\end{align}
where $a_0, \ldots , a_k , b_0, \ldots , b_k \in \Nset_0$, $k \geq 0$ and
 \begin{enumerate}\itemsep=0pt
 \item[($i$)] exactly one of $a_k$ and $b_k$ is non-zero,
 \item[($ii$)] if $a_i\neq 0$ and $b_i\neq 0$, then $a_{i+1}\neq 0$
or $b_{i+1}\neq 0$.
 \end{enumerate}
As the defining relations of this presentation of $F$ involve no inverse generators, one can associate
to it the monoid
\begin{equation}\label{eq:F+}
F^{+}=\langle g_0,g_1,g_2,\ldots \mid g_{k}g_{\ell}=g_{\ell+1}g_{k}
\text{ for } 0\leq k<\ell <\infty \rangle^+,
\end{equation}
referred to as the \emph{Thompson monoid $F^+$}. We remark that, alternatively, the
generators of this monoid can be obtained as morphisms (in the inductive limit) of the category
of finite binary forests, see for example \cite{Be04,Jo18a}.

\begin{Definition} \label{definition:mn-shift}
Let $m,n \in \Nset_0$ with $m \le n$ be fixed. The \emph{$(m,n)$-partial shift} $\sh_{m,n}$ is
the group homomorphism on $F$ defined by
\[
\sh_{m,n}(g_k) = \begin{cases}
 g_m &\text{if}\ k=0,
 \\
 g_{n +k} &\text{if}\ k \ge 1.
 \end{cases}
\]
\end{Definition}

We remark that the map $\sh_{m,n}$ preserves all defining relations of $F$ and is thus
well-defined as a group homomorphism.

\begin{Lemma} 
The group homomorphisms $\sh_{m,n}$ on $F$ are injective for all $m,n \in \Nset_0$.
\end{Lemma}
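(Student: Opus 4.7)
The plan is to reduce the injectivity of $\sh_{m,n}$ to two simpler claims via the factorization $\sh_{m,n} = \sh_{m,m} \circ \sh_{0,n-m}$, which one verifies by evaluating both sides on each generator of $F$. It then suffices to show that $\sh_{m,m}$ and $\sh_{0,r}$ (where $r = n - m$) are each injective.

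For $\sh_{m,m}$ I would argue directly from the normal form: if $g \in F$ has normal form $g_0^{-b_0} \cdots g_k^{-b_k} g_k^{a_k} \cdots g_0^{a_0}$, then $\sh_{m,m}(g)$ is literally the word
\[
g_m^{-b_0} g_{m+1}^{-b_1} \cdots g_{m+k}^{-b_k} g_{m+k}^{a_k} \cdots g_{m+1}^{a_1} g_m^{a_0},
\]
which, when indexed from $0$ by inserting zero exponents at positions $i < m$, still satisfies conditions (i) and (ii): the top-index condition at $m+k$ is inherited from the one at $k$ for $g$, and (ii) holds because the inserted zero slots trivially satisfy it while the remaining positions $m, \ldots, m+k-1$ inherit it from $g$. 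Hence $\sh_{m,m}(g)$ is itself in normal form and in particular non-identity.

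For $\sh_{0,r}$, the easy identity $\sh_{0,r_1 + r_2} = \sh_{0,r_1} \circ \sh_{0,r_2}$ reduces matters to showing $\sh_{0,1}$ injective. My plan here is to construct an explicit left inverse $\rho \colon F \to F$ specified on generators by $\rho(g_0) = g_0$, $\rho(g_1) = g_0^{-1} g_1 g_0$, and $\rho(g_k) = g_{k-1}$ for $k \geq 2$. Provided $\rho$ is a well-defined homomorphism, one immediately checks $\rho \circ \sh_{0,1} = \Id_F$ on generators, so $\sh_{0,1}$ is injective and the lemma follows.

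The main obstacle is verifying that $\rho$ preserves every defining relation $g_k g_\ell = g_{\ell+1} g_k$ with $k < \ell$. For $k = 0$ and $\ell = 1$ the verification is immediate from the definition of $\rho(g_1)$; for $k = 0$ with $\ell \geq 2$ and for $k \geq 2$, it reduces to the same defining relation of $F$ at either the original or shifted indices. The only delicate case is $k = 1$ with $\ell \geq 2$, where both $\rho(g_1 g_\ell) = g_0^{-1} g_1 g_0 g_{\ell-1}$ and $\rho(g_{\ell+1} g_1) = g_\ell g_0^{-1} g_1 g_0$ must be reduced to the common expression $g_0^{-1} g_{\ell+1} g_1 g_0$, which I would achieve by applying $g_0 g_{\ell-1} = g_\ell g_0$ followed by $g_1 g_\ell = g_{\ell+1} g_1$ on the first expression, and the conjugate identity $g_0 g_\ell g_0^{-1} = g_{\ell+1}$ on the second.
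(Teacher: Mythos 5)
Your proof is correct, but it takes a genuinely different route from the paper's. The paper argues in one step: it applies $\sh_{m,n}$ directly to the normal form \eqref{eq:F-normal-form} of $g$, rewrites $\sh_{m,n}(g)=e$ as an equality of two positive words each in normal form, and invokes uniqueness of the normal form to force $a_i=b_i$ for all $i$. You instead factor $\sh_{m,n}=\sh_{m,m}\circ\sh_{0,n-m}$ and treat the two factors by different methods: the pure index shift $\sh_{m,m}$ via a normal-form argument (your padding with zero exponents does preserve conditions ($i$) and ($ii$), and the relevant relation checks for your retraction $\rho$ -- including the delicate case $k=1$, $\ell\ge 2$, where both sides reduce to $g_0^{-1}g_{\ell+1}g_1g_0$ -- all go through), and the shifts $\sh_{0,r}$ via an explicit left inverse of $\sh_{0,1}$. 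What your approach buys is that the $\sh_{0,r}$ part becomes purely relation-theoretic, independent of the normal form theorem, and moreover exhibits $\sh_{0,1}(F)$ as a retract of $F$, which is extra structural information not visible in the paper's argument. What it costs is length and a case analysis of relation-preservation that the paper avoids entirely; both arguments still ultimately lean on the normal form theorem (yours for the $\sh_{m,m}$ factor, where you need that a nonempty word in normal form represents a nontrivial element -- the same strength of \cite[Theorem~1.3.7]{Be04} that the paper uses).
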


\begin{proof}
It suffices to show that $\sh_{m,n}(g) = e$ implies $g=e$.
Let $g\in F$ have the (unique) normal form as stated in \eqref{eq:F-normal-form}. Thus, by the definition of the partial shifts,
\[
\sh_{m,n}(g) = g_m^{-b_0}\cdots g_{n+k}^{-b_k} \, g_{n+k}^{a_k}\cdots g_m^{a_0}.
\]
 Thus $\sh_{m,n}(g) = e$ if and only if
 $g_{n+k}^{a_k}\cdots \, g_m^{a_0} = g_{n+k}^{b_k} \cdots \, g_m^{b_0}$.
Since the elements on both sides of the last equation are in normal form, its uniqueness implies $a_i =b_i$ for all $i$. But this entails $g=e$.
\end{proof}

\subsection{Noncommutative probability spaces and Markov maps}
\label{subsection:Markov-maps}

Throughout, a \emph{noncommutative probability space} $(\cM,\psi)$ consists of a
von Neumann algebra $\cM$ and a faithful normal state $\psi$ on $\cM$. The identity
of $\cM$ will be denoted by $\1_{\cM}$, or simply by $\1$ when the context is clear.
Throughout, $\bigvee_{i\in I}\cM_i$ denotes the von Neumann algebra generated by the
family of von Neumann algebras $\{\cM_i\}_{i\in I} \subset \cM$ for $I\subset \Zset$.
If $\cM$ is abelian and acts on a~separable Hilbert space, then $(\cM,\psi)$ is
isomorphic to $\big(L^\infty(\Omega, \Sigma, \mu), \int_{\Omega} \cdot \,\, {\rm d}\mu\big)$
for some standard probability space $(\Omega, \Sigma, \mu)$.

\begin{Definition} 
An \emph{endomorphism} $\alpha$ of a noncommutative probability space $(\cM,\psi)$ is a $*$-homomorphism on $\cM$
satisfying the following additional properties:
\begin{enumerate}\itemsep=0pt
 \item[($i$)] $\psi\circ \alpha=\psi$ (stationarity),
 \item[($ii$)] $\alpha$ and the modular automorphism group $\sigma_t^{\psi}$ commute
 for all $t\in \Rset$ (modularity).
\end{enumerate}
The set of endomorphisms of $(\cM,\psi)$ is denoted by $\End(\cM,\psi)$. We note that an
endomorphism of $(\cM,\psi)$ is automatically injective. In this paper, we will chiefly work with the automorphisms of $(\cM,\psi)$ denoted by $\Aut(\cM,\psi)$.
\end{Definition}
Note that $\alpha \in \End(\cM,\psi)$ automatically satisfies
\[
\alpha(\1_{\cM})=\1_{\cM} \qquad \text{(unitality)}.
\]
Indeed, the *-homomorphism property and stationarity of $\alpha$ entails
\[
\psi\big( (\alpha(\1_{\cM}) -\1_{\cM})^* (\alpha(\1_{\cM}) -\1_{\cM})\big) = 0.
\]
Now the faithfulness of $\psi$ ensures
$\alpha(\1_{\cM}) -\1_{\cM} = 0$.

\begin{Definition}
Let $(\cM,\psi)$ and $(\cN,\varphi)$ be two noncommutative probability spaces. A linear map
$T \colon \cM \to \cN$ is called a \emph{$(\psi,\varphi)$-Markov map} if the following
conditions are satisfied:
\begin{enumerate}\itemsep=0pt
\item[$(i)$] 
$T$ is completely positive,
\item[$(ii)$] 
$T$ is unital,
\item[$(iii)$]
$\varphi \circ T = \psi$,
\item[$(iv)$] 
$T \circ \sigma_t^{\psi} = \sigma_t^{\varphi} \circ T$, for all $t \in \Rset$.
\end{enumerate}
\end{Definition}

Here $\sigma_{}^{\psi}$ and $\sigma_{}^{\varphi}$ denote the modular automorphism groups of
$(\cM,\psi)$ and $(\cN,\varphi)$, respectively. If $(\cM,\psi) = (\cN,\varphi)$, we say that
$T$ is a $\psi$-\emph{Markov map on $\cM$}. Conditions~$(i)$ to~$(iii)$
imply that a Markov map is automatically normal. The condition~$(iv)$ is equivalent
to the condition that a unique Markov map $T^* \colon (\cN,\varphi) \to (\cM,\psi)$ exists such
that
\[
\psi\big(T^*(y)x\big) = \varphi\big(y\, T(x)\big), \qquad x \in \cM,\quad y \in \cN.
\]
The Markov map $T^*$ is called the \emph{adjoint} of $T$ and $T$ is called \emph{self-adjoint} if
$T=T^*$. We note that condition~$(iv)$ is automatically satisfied whenever $\psi$ and
$\varphi$ are tracial, in particular for abelian von Neumann algebras $\cM$ and $\cN$.
Furthermore, we note that any
$T \in \End(\cM,\psi)$ is automatically a $\psi$-Markov map and, in particular, any $T \in \Aut(\cM,\psi)$ is a $\psi$-Markov map with adjoint $T^* = T^{-1}$.

We recall for the convenience of the reader the definition of conditional expectations in the
present framework of noncommutative probability spaces.
\begin{Definition}
Let $(\cM,\psi)$ be a noncommutative probability space, and $\cN$ be a von Neumann subalgebra of
$\cM$. A linear map $E\colon \cM\to \cN$ is called a \emph{conditional expectation} if it satisfies
the following conditions:
\begin{enumerate}\itemsep=0pt
 \item[$(i)$] $E(x)=x$ for all $x\in \cN$,
 \item[$(ii)$] $\|E(x)\|\leq \|x\|$ for all $x\in \cM$,
 \item[$(iii)$] $\psi\circ E=\psi$.
\end{enumerate}
\end{Definition}

Such a conditional expectation exists if and only if $\cN$ is globally invariant under the
modular automorphism group of $(\cM,\psi)$ (see \cite{Ta72,Ta79,Ta03}). The
von Neumann subalgebra $\cN$ is called $\psi$-conditioned if this condition is satisfied. Note
that such a conditional expectation is automatically normal and uniquely determined by $\psi$. In
particular, a conditional expectation is a~Markov map and satisfies the module property
$E(axb)=aE(x)b$ for $a,b\in \cN$ and $x\in \cM$.

\subsection{Noncommutative independence and Markovianity}\label{subsection:Markovianity}

We recall some equivalent properties as they serve to define commuting squares in subfactor
theory (see for example \cite{GHJ89,JS97,Po89}) and as they are familiar from conditional independence
in classical probability.

\begin{Proposition}\label{proposition:cs}
Let $\cM_0$, $\cM_1$, $\cM_2$ be $\psi$-conditioned von Neumann subalgebras of the probability space
$(\cM,\psi)$ such that $\cM_0 \subset (\cM_1 \cap \cM_2)$. Then the following are equivalent:
\begin{enumerate}\itemsep=0pt
\item[$(i)$] \label{item:cs-i}
$E_{\cM_0}(xy) = E_{\cM_0}(x) E_{\cM_0}(y)$ for all $x \in \cM_1$ and $y\in \cM_2$,
\item[$(ii)$] \label{item:cs-ii}
$E_{\cM_1} E_{\cM_2} = E_{\cM_0}$,
\item[$(iii)$] \label{item:cs-iii}
$E_{\cM_1}(\cM_2) = \cM_0$,
\item[$(iv)$] \label{item:cs-iv}
$E_{\cM_1} E_{\cM_2} = E_{\cM_2} E_{\cM_1}$ and $\cM_1\cap \cM_2 = \cM_0$.
\end{enumerate}
In particular, it holds that $\cM_0 = \cM_1 \cap \cM_2$ if one and thus all of these four
assertions are satisfied.
\end{Proposition}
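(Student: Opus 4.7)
My plan is to prove the cycle (ii) $\Leftrightarrow$ (iii) $\Rightarrow$ (i) $\Rightarrow$ (iii) among the first three conditions, then bridge to (iv) separately via (ii) $\Leftrightarrow$ (iv); the ``in particular'' clause will fall out of (ii) $\Rightarrow$ (iv).

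For (iii) $\Rightarrow$ (ii), I would argue that $E_{\cM_1} E_{\cM_2}$ is a normal u.c.p.\ map sending $\cM$ into $\cM_0$, preserving $\psi$, restricting to the identity on $\cM_0$, and satisfying the $\cM_0$-bimodule property because elements of $\cM_0 \subseteq \cM_1 \cap \cM_2$ slide through both conditional expectations. Uniqueness of the $\psi$-preserving conditional expectation onto the $\psi$-conditioned subalgebra $\cM_0$ then forces $E_{\cM_1} E_{\cM_2} = E_{\cM_0}$. The reverse direction (ii) $\Rightarrow$ (iii) is immediate: for $y \in \cM_2$, $E_{\cM_1}(y) = E_{\cM_1} E_{\cM_2}(y) = E_{\cM_0}(y) \in \cM_0$, and $\cM_0 \subseteq E_{\cM_1}(\cM_2)$ holds because $E_{\cM_1}|_{\cM_0} = \Id$.

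For (ii) $\Rightarrow$ (i), I would first take $\psi$-adjoints: since each $E_{\cM_i}$ is $\psi$-self-adjoint (the subalgebras are $\psi$-conditioned), the adjoint of (ii) also gives $E_{\cM_2} E_{\cM_1} = E_{\cM_0}$. Then for $x \in \cM_1$ and $y \in \cM_2$ the chain $E_{\cM_0}(xy) = E_{\cM_1}(E_{\cM_2}(x)\,y) = E_{\cM_0}(x)\, E_{\cM_1}(y) = E_{\cM_0}(x)\, E_{\cM_0}(y)$ follows from the $\cM_2$- and $\cM_1$-module properties together with $E_{\cM_2}(x) = E_{\cM_0}(x)$ (since $E_{\cM_2} E_{\cM_1}(x) = E_{\cM_0}(x)$ and $E_{\cM_1}(x) = x$) and the analogous $E_{\cM_1}(y) = E_{\cM_0}(y)$. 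For (i) $\Rightarrow$ (iii), fix $y \in \cM_2$ and set $z = E_{\cM_1}(y) - E_{\cM_0}(y) \in \cM_1$. For any $a \in \cM_1$, the tower identity $E_{\cM_0} = E_{\cM_0} E_{\cM_1}$ and (i) combine to give $\psi(az) = \psi(ay) - \psi(a E_{\cM_0}(y)) = \psi(E_{\cM_0}(a)\, E_{\cM_0}(y)) - \psi(a E_{\cM_0}(y)) = 0$; testing with $a = z^*$ and invoking faithfulness of $\psi$ forces $z = 0$, so $E_{\cM_1}(\cM_2) \subseteq \cM_0$.

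For the bridge to (iv), (ii) $\Rightarrow$ (iv) follows by adjoint symmetry (yielding the commutativity) and by $z = E_{\cM_1} E_{\cM_2}(z) = E_{\cM_0}(z) \in \cM_0$ for any $z \in \cM_1 \cap \cM_2$, which also supplies the ``in particular'' statement. Conversely, under (iv) commutativity makes $E_{\cM_1} E_{\cM_2}$ an idempotent $\psi$-preserving u.c.p.\ map, while $E_{\cM_2}(E_{\cM_1} E_{\cM_2}) = E_{\cM_1} E_{\cM_2}$ (combining commutativity with $E_{\cM_2}^2 = E_{\cM_2}$) forces its range into $\cM_1 \cap \cM_2 = \cM_0$; the bimodule check from (iii) $\Rightarrow$ (ii) then identifies this map with $E_{\cM_0}$. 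The only genuinely delicate step I foresee is the adjoint manoeuvre: passing between $E_{\cM_1} E_{\cM_2}$ and $E_{\cM_2} E_{\cM_1}$ rests on $\psi$-self-adjointness of each $E_{\cM_i}$, which itself depends on the $\psi$-conditioned hypothesis through Takesaki's theorem. Everything else reduces to the module property, $\psi$-preservation, and faithfulness of $\psi$.
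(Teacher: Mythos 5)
Your proof is correct and complete, but it is worth noting that the paper does not actually spell out an argument at all: its ``proof'' consists of citing \cite[Proposition~4.2.1]{GHJ89} for the tracial case and asserting that the non-tracial case follows ``after some minor modifications''. You have, in effect, supplied those modifications in full. Your cycle $(i)\Rightarrow(iii)\Rightarrow(ii)\Rightarrow(i)$ together with the bridge $(ii)\Leftrightarrow(iv)$ covers all four equivalences, and every step checks out: the identification of $E_{\cM_1}E_{\cM_2}$ with $E_{\cM_0}$ via uniqueness of the $\psi$-preserving conditional expectation onto a $\psi$-conditioned subalgebra (Takesaki), the $\psi$-self-adjointness of each $E_{\cM_i}$ as a Markov map (which replaces the orthogonal-projection picture on $L^2$ used in the tracial argument of \cite{GHJ89}), and the faithfulness argument $\psi(z^*z)=0\Rightarrow z=0$ in $(i)\Rightarrow(iii)$. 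The only cosmetic gap is that in $(i)\Rightarrow(iii)$ you conclude only $E_{\cM_1}(\cM_2)\subseteq\cM_0$; the reverse inclusion is the trivial observation $E_{\cM_1}|_{\cM_0}=\Id$ with $\cM_0\subset\cM_2$, which you had already recorded in the $(ii)\Rightarrow(iii)$ step, so nothing is really missing. What your route buys over the paper's citation is a self-contained argument that works directly in the non-tracial setting, resting only on the module property, $\psi$-preservation, uniqueness, and faithfulness, rather than on adapting the $L^2$-projection argument of the tracial case.
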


\begin{proof}
The case of tracial $\psi$ is proved in \cite[Proposition~4.2.1]{GHJ89}. The non-tracial case follows
from this, after some minor modifications of the arguments therein.
\end{proof}

\begin{Definition}
The inclusions
\[
\begin{matrix}
\cM_2 &\subset &\cM\\
\cup & & \cup \\
\cM_0 & \subset & \cM_1
\end{matrix}
\]
as given in Proposition~\ref{proposition:cs} are said to form a \emph{commuting square} (\emph {of von
Neumann algebras}) if one (and thus all) of the equivalent conditions~$(i)$ to~$(iv)$
are satisfied in Proposition~\ref{proposition:cs}.
\end{Definition}

\begin{Notation} 
We write $I < J$ for two subsets $I, J \subset \Zset$ if $i < j$ for all $i \in I$ and ${j \in
J}$. The cardinality of $I$ is denoted by $|I|$. For $N \in \Zset$, we denote by $I + N$ the
shifted set $\{i + N \mid i \in I\}$. Finally, $\cI(\Zset)$ denotes the set of all ``intervals'' of
$\Zset$, i.e.,~sets of the form $[m,n] := \{m, m+1, \ldots, n\}$, $[m,\infty) := \{m, m+1,
\ldots\}$ or $(-\infty, m] := \{\ldots, m-1,m\}$ for $-\infty < m \le n < \infty$.
\end{Notation}

We next address the basic notions of Markovianity in noncommutative probability. Commonly,
Markovianity is understood as a property of random variables relative to a filtration of the
underlying probability space. Our investigations from the viewpoint of distributional invariance
principles reveal that the phenomenon of ``Markovianity'' emerges without reference to any
stochastic process already on the level of a family of von Neumann subalgebras, indexed by the
partially ordered set of all ``intervals'' $\cI(\Zset)$. As commonly the index set of a filtration
is understood to be totally ordered \cite{Ve17}, we refer to such families
with partially ordered index sets as ``local filtrations''.

\begin{Definition}
A family of $\psi$-conditioned von Neumann subalgebras $\cM_\bullet \equiv \{\cM_I\}_{I \in
\cI(\Zset)}$ of the probability space $(\cM,\psi)$ is called a \emph{local filtration $($of
$(\cM,\psi))$} if
 \begin{gather*}
 I \subset J \, \Longrightarrow \, \cM_I \subset \cM_J \qquad \text{(isotony)}.
\end{gather*}
\end{Definition}

The isotony property ensures that one has the inclusions
\[
\begin{matrix}
\cM_{I} &\subset &\cM\\
\cup & & \cup \\
\cM_{K} & \subset & \cM_{J}
\end{matrix}
\]
for $I, J, K \in \cI(\Zset)$ with $K \subset (I \cap J)$. Finally, let $\cN_\bullet \equiv
\{\cN_I\}_{I \in \cI(\Zset)}$ be another local filtration of $(\cM,\psi)$. Then $\cN_\bullet$
is said to be \emph{coarser} than $\cM_\bullet$ if $\cN_I \subset \cM_I$ for all
$I \in \cI(\Zset)$ and we denote this by $\cN_{\bullet}\prec \cM_{\bullet}$. Occasionally we
will address $\cN_{\bullet}$ also as a \emph{local subfiltration} of $\cM_{\bullet}$.

\begin{Definition} 
Let $\cM_\bullet \equiv \{\cM_I\}_{I \in \cI(\Zset)}$ be a local filtration of $(\cM,\psi)$.
$\cM_{\bullet}$ is said to be \emph{Markovian} if the inclusions
\[
\begin{matrix}
\cM_{(-\infty,n]} &\subset &\cM\\
\cup & & \cup \\
\cM_{[n,n]} & \subset & \cM_{[n,\infty)}
\end{matrix}
\]
form a commuting square for each $n \in \Zset$.
\end{Definition}

Cast as commuting squares, Markovianity of the local filtration $\cM_\bullet$ has many equivalent
formulations, see Proposition~\ref{proposition:cs}. In particular, it holds that
\begin{gather*}
E_{\cM_{(-\infty,n]}} E_{\cM_{[n,\infty)}} = E_{\cM_{[n,n]}}\qquad \text{for all}\quad n \in \Zset.
\end{gather*}
Here $E_{\cM_I}$ denotes the $\psi$-preserving normal conditional expectation from $\cM$ onto
$\cM_I$.

\subsection{Noncommutative stationary processes and dilations}
\label{subsection:Noncommutative Stationary Processes}

We introduce bilateral noncommutative stationary processes, as they underlie the
approach to distributional invariance principles in \cite{GK09,Ko10}. Furthermore, we present
 dilations of Markov maps using K\"ummerer's approach to
noncommutative stationary Markov processes \cite{Ku85}. The existence of such dilations is
actually equivalent to the factoralizability of Markov maps (see \cite{AD06,HM11}).

\begin{Definition} \label{definition:process-sequence}
A \emph{bilateral stationary process} $(\cM,\psi, \alpha, \cA_0)$ consists of a probability
space $(\cM,\psi)$, a $\psi$-conditioned subalgebra $\cA_0 \subset \cM$, and an automorphism
$\alpha\in \Aut(\cM,\psi)$. The sequence
\[
(\iota_n)_{n \in \Zset}\colon\ (\cA_0, \psi_0) \to (\cM,\psi),
\qquad \iota_{n} := \alpha^n|_{\cA_0}=\alpha^n\iota_0,
\]
is called the \emph{sequence of random variables associated to} $(\cM,\psi, \alpha, \cA_0)$.
Here $\psi_0$ denotes the restriction of $\psi$ from $\cM$ to $\cA_0$ and $\iota_0$ denotes the
inclusion map of $\cA_0$ in $\cM$.

The stationary process $(\cM,\psi, \alpha, \cA_0)$ is called \emph{minimal} if
\[
\bigvee_{i \in \Zset} \alpha^i(\cA_0) = \cM.
\]
\end{Definition}

\begin{Definition} \label{definition:ncms}
The (not necessarily minimal) stationary process $(\cM,\psi, \alpha, \cA_0)$ is called a~(\emph{bilateral noncommutative}) \emph {stationary Markov process} if its canonical local filtration
\[
\biggl\{\cA_I:= \bigvee_{i \in I} \alpha^i(\cA_0)\biggr\}_{I \in \cI(\Zset)}
\]
is Markovian. If this process is minimal, then the endomorphism $\alpha$ is also called a
\emph{Markov shift} with generator $\cA_0$. Furthermore, the associated $\psi_0$-Markov map $T=\iota_0^*\alpha \iota_0$ on $\cA_0$ is called the
\emph{transition operator} of the stationary Markov process.
Here $\iota_0$ denotes the inclusion
map of $\cA_0$ in $\cM$, and $\psi_0$ is the restriction of $\psi$ to $\cA_0$.
\end{Definition}

The next lemma gives a simplified condition to check that a bilateral stationary process is a~Markov process.

\begin{Lemma} \label{lemma:Mark-Suff}
Let $(\cM,\psi, \alpha, \cA_0)$ be a bilateral stationary process with canonical local filtration
$
\{\cA_I:= \bigvee_{i \in I} \alpha^i(\cA_0)\}_{I \in \cI(\Zset)}$.
Suppose
\[
P_{(-\infty, 0]} P_{[0,\infty)} = P_{[0,0]},
\]
where $P_I$ denotes the $\psi$-preserving normal conditional expectation from $\cM$ onto $\cA_I$.
Then $\{\cA_I\}_{I \in \cI(\Zset)}$ is a local Markov filtration and $( \cM,\psi, \alpha, \cA_0)$ is a bilateral stationary Markov process.
\end{Lemma}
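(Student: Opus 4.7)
The plan is to exploit the stationarity of $(\cM,\psi,\alpha,\cA_0)$ to bootstrap the hypothesis, which is stated only at the single index $n = 0$, to the commuting square property at every $n \in \Zset$ required by the definition of Markovianity. The entire argument rests on the observation that $\alpha$ shifts the canonical filtration by one step, and that, in turn, the $\psi$-preserving conditional expectations transform covariantly under $\alpha$.

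First I would verify that $\alpha(\cA_I) = \cA_{I+1}$ for every $I \in \cI(\Zset)$. This is immediate from $\cA_I = \bigvee_{i \in I} \alpha^i(\cA_0)$ and the fact that a normal $*$-automorphism respects the von Neumann algebra generated by any family of subalgebras. Since $\alpha \in \Aut(\cM,\psi)$ commutes with the modular automorphism group $\sigma^{\psi}_t$, it maps $\psi$-conditioned subalgebras to $\psi$-conditioned subalgebras, so each $\cA_I$ is $\psi$-conditioned and the projections $P_I$ are well-defined. Next I would establish the covariance relation
\[
\alpha \circ P_I \circ \alpha^{-1} = P_{I+1} \qquad (I \in \cI(\Zset)).
\]
Indeed, $\alpha P_I \alpha^{-1}$ is a contractive linear projection from $\cM$ onto $\alpha(\cA_I) = \cA_{I+1}$, and it preserves $\psi$ because each of its three factors does. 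Uniqueness of the $\psi$-preserving conditional expectation onto a $\psi$-conditioned subalgebra forces it to equal $P_{I+1}$.

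With this covariance in hand, conjugating the hypothesis $P_{(-\infty,0]} P_{[0,\infty)} = P_{[0,0]}$ by $\alpha^n$ yields
\[
P_{(-\infty,n]} P_{[n,\infty)} = \alpha^n P_{(-\infty,0]} \alpha^{-n} \alpha^n P_{[0,\infty)} \alpha^{-n} = \alpha^n P_{[0,0]} \alpha^{-n} = P_{[n,n]}
\]
for every $n \in \Zset$. Since $\cA_{[n,n]} \subset \cA_{(-\infty,n]} \cap \cA_{[n,\infty)}$, this identity is exactly condition~(ii) of Proposition~\ref{proposition:cs}, and hence the inclusions with corners $\cA_{[n,n]}$, $\cA_{(-\infty,n]}$, $\cA_{[n,\infty)}$, $\cM$ form a commuting square. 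This verifies the defining condition for $\{\cA_I\}_{I \in \cI(\Zset)}$ to be a local Markov filtration, so $(\cM,\psi,\alpha,\cA_0)$ is a bilateral stationary Markov process. There is no genuine obstacle in this proof; the only point that deserves a moment's care is the intertwining $\alpha P_I \alpha^{-1} = P_{I+1}$, which in turn hinges on the modularity of $\alpha$ built into the definition of $\Aut(\cM,\psi)$.
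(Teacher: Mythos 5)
Your proof is correct and follows essentially the same route as the paper: conjugate the single commuting-square identity at $n=0$ by $\alpha^n$ using the covariance $\alpha^n P_I \alpha^{-n} = P_{I+n}$. The only cosmetic difference is that you derive this covariance directly from the uniqueness of the $\psi$-preserving conditional expectation, whereas the paper cites it from K\"ummerer; both are fine.
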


\begin{proof}
For all $k \in \Zset$ and $I \in \cI(\Zset)$, we have $ \alpha^{k} P_{I} = P_{I+k} \alpha^{k}$ (see \cite[Remark 2.1.4]{Ku85}). Hence, for each $n \in \Zset$,
\begin{align*}
P_{(-\infty,0]}P_{[0,\infty)} = P_{[0,0]}
&\,\Longleftrightarrow\,
\alpha^n P_{(-\infty,0]}P_{[0,\infty)}\alpha^{-n}
 = \alpha^n P_{[0,0]} \alpha^{-n}
\\
&\,\Longleftrightarrow\,
P_{(-\infty,n]}P_{[n,\infty)} = P_{[n,n]},
\end{align*}
which is the required
Markovianity for the local filtration $\{\cA_{I}\}_{I\in \cI(\Zset)}$.
\end{proof}

\begin{Definition}[{\cite[Definition 2.1.1]{Ku85}}] \label{definition:dilation}
Let $(\cA,\varphi)$ be a probability space. A $\varphi$-Markov map~$T$ on $\cA$ is said to admit
a \emph{$($bilateral state-preserving$)$ dilation} if there exists a probability space $(\cM,\psi)$,
an automorphism $\alpha\in \Aut(\cM,\psi)$ and a $(\varphi,\psi)$-Markov map $\iota_0\colon\cA\to \cM$
such that
\begin{eqnarray*}
T^n=\iota_0^*\alpha^n \iota_0 \qquad \text{for all}\quad n \in \Nset_0.
\end{eqnarray*}
Such a dilation of $T$ is denoted by the quadruple $(\cM,\psi,\alpha,\iota_0)$ and is said to be
\emph{minimal} if $\cM=\bigvee_{n\in \Zset} \alpha^{n}\iota_0(\cA)$. $(\cM,\psi,\alpha,\iota_0)$ is called a \emph{dilation of first order} if the equality $T= \iota_0^* \alpha \iota_0$ alone holds.
\end{Definition}

Actually it follows from the case $n=0$ that the $(\varphi,\psi)$-Markov map $\iota_0$ is a
random variable from $(\cA,\varphi)$ to $(\cM,\psi)$ such that $\iota_0\iota_0^*$ is the
$\psi$-preserving conditional expectation from $\cM$ onto~$\iota_0(A)$.

\begin{Definition}[{\cite[Definition 2.2.4]{Ku85}}] 
The dilation $(\cM,\psi,\alpha,\iota_0)$ of the $\varphi$-Markov map $T$ on~$\cA$ (as introduced
in Definition \ref{definition:dilation}) is said to be a (bilateral state-preserving) \emph {Markov
dilation} if the local filtration $\big\{\cA_I := \bigvee_{n \in I} \alpha^n\iota_0(\cA)\big\}_{I \in
\cI(\Zset)}$ is Markovian.
\end{Definition}

\begin{Remark}
A dilation of a $\varphi$-Markov map $T$ on $\cA$ may not be a Markov dilation.
This is discussed in \cite[Section 3]{KuSchr83}, where it is shown that Varilly has
constructed a dilation in \cite{Va81} which is not a Markov dilation. We are grateful
to B.~K\"ummerer for bringing this to our attention \cite{Ku21}.
Note that this does not contradict the result that the \emph{existence} of a dilation and
the \emph{existence} of a Markov dilation are equivalent (see \cite[Theorem 4.4]{HM11} or \cite[Theorem 2.6.8]{KKW20}).
\end{Remark}

\begin{Definition}[{\cite[Definition 4.1.3]{Ku85}}] \label{definition:tensordilation}
Let $(\cA, \varphi)$ be a probability space and $T$ be a $\varphi$-Markov map on $\cA$. A dilation of first order $(\cM, \psi, \alpha, \iota_0)$ of $T$ is called a \emph{tensor dilation} if the conditional expectation $\iota_0 \iota_0^* \colon \cM \to \iota_0(\cA)$ is of tensor type, that is, there exists a von Neumann subalgebra~$\cC$ of $\cM$ with faithful normal state $\chi$ such that $\mathcal{M} = \mathcal{A} \otimes \mathcal{C}$ and $(\iota_0\iota_0^*)(a \otimes x) = \chi(x)(a \otimes \mathbbm{1}_{\mathcal{C}})$ for all $a \in \cA$, $x \in \cC$.
\end{Definition}
Let us next relate the above bilateral notions of dilations and stationary processes. It is
immediate that a dilation $(\cM,\psi,\alpha,\iota_0)$ of the $\varphi$-Markov map $T$
on $\cA$ gives rise to the stationary process $(\cM,\psi, \alpha, \iota_0(\cA))$.
Furthermore, this stationary process is Markovian if and only if the dilation is a Markov
dilation, as evident from the definitions. Conversely, a stationary Markov process
yields a dilation (and thus a Markov dilation) as it was shown by K\"ummerer, stated below for the convenience of the reader.

 \begin{Proposition}[{\cite[Proposition~2.2.7]{Ku85}}]
 \label{proposition:dilation}
Let $(\cM,\psi, \alpha,\cA_0)$ be a bilateral noncommutative stationary Markov process and $T=\iota_0^*\alpha \iota_0$ be
the corresponding transition operator where $\iota_0$ is the inclusion map of $\cA_0$ into $\cM$.
Then $(\cM,\psi,\alpha,\iota_0)$ is a dilation of $T$. In other words, the following diagram
commutes for all $n\in \Nset_0$:
	\[
	\begin{tikzcd}
	(\cA_0, \psi_0) \arrow[r, "T^n"] \arrow[d, "\iota_0"]
	& (\cA_0, \psi_0) \arrow[d, leftarrow, "\iota_0^*"] \\
	(\cM,\psi) \arrow[r, "\alpha^n"]
	& (\cM,\psi).
	\end{tikzcd}
	\]
Here $\psi_0$ denotes the restriction of $\psi$ to $\cA_0$.
\end{Proposition}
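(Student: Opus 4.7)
The target identity is $T^n = \iota_0^* \alpha^n \iota_0$ for every $n \in \Nset_0$, where $\iota_0^* = E_{\cA_0}$ is the $\psi$-preserving conditional expectation onto $\cA_{[0,0]} = \cA_0$. The plan is to induct on $n$. The cases $n=0$ and $n=1$ are immediate: $\iota_0^*\iota_0 = \mathrm{id}_{\cA_0}$ since $\iota_0$ is an inclusion, and the $n=1$ identity is the very definition of $T$.

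For the inductive step, assuming $T^n(a) = E_{\cA_0}(\alpha^n(a))$ for all $a \in \cA_0$, I would rewrite
\[
T^{n+1}(a) = T\bigl(T^n(a)\bigr) = E_{\cA_0}\bigl(\alpha\bigl(E_{\cA_0}(\alpha^n(a))\bigr)\bigr),
\]
so the task is to verify $E_{\cA_0}\alpha E_{\cA_0}\alpha^n(a) = E_{\cA_0}\alpha^{n+1}(a)$. The first ingredient is the standard covariance $\alpha E_{\cA_I} = E_{\cA_{I+1}}\alpha$ (used already in the proof of Lemma \ref{lemma:Mark-Suff}), which converts the left-hand side into $E_{\cA_0} E_{\cA_{[1,1]}}\alpha^{n+1}(a)$.

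The second ingredient is Markovianity. Rather than use the commuting square at $n=0$, I would invoke its translate at $n=1$, which by Proposition \ref{proposition:cs}(ii) reads
\[
E_{\cA_{[1,1]}} = E_{\cA_{(-\infty,1]}} E_{\cA_{[1,\infty)}}.
\]
Since $a\in\cA_0$ and $n\ge 0$, we have $\alpha^{n+1}(a)\in \cA_{[n+1,n+1]}\subset \cA_{[1,\infty)}$, so $E_{\cA_{[1,\infty)}}$ fixes $\alpha^{n+1}(a)$. Then, as $\cA_0\subset \cA_{(-\infty,1]}$, the tower property gives $E_{\cA_0}E_{\cA_{(-\infty,1]}} = E_{\cA_0}$. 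Stringing these together:
\[
E_{\cA_0}E_{\cA_{[1,1]}}\alpha^{n+1}(a) = E_{\cA_0}E_{\cA_{(-\infty,1]}}\alpha^{n+1}(a) = E_{\cA_0}\alpha^{n+1}(a),
\]
which closes the induction.

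There is no real obstacle here beyond keeping the bookkeeping straight; the only delicate point is choosing the \emph{right} commuting square from the local Markov filtration. Using Markovianity at $n=0$ would force us to project $\alpha^{n+1}(a)$ onto $\cA_{(-\infty,0]}$ first, which is awkward because $\alpha^{n+1}(a)$ does not lie in $\cA_{[0,\infty)}$ in a way that aligns with the outer $\alpha$; the translate at $n=1$ matches exactly the generator shift produced by $\alpha E_{\cA_0}$ and makes the argument essentially mechanical.
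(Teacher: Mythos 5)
Your argument is correct. Note that the paper itself gives no proof of this statement---it is imported verbatim from K\"ummerer \cite[Proposition~2.2.7]{Ku85}---so there is nothing internal to compare against; your induction is essentially the standard argument. All three ingredients check out: the covariance $\alpha E_{\cA_I}\alpha^{-1}=E_{\cA_{I+1}}$ is exactly the identity the paper already invokes in the proof of Lemma~\ref{lemma:Mark-Suff}, the commuting square at level $1$ is available because Markovianity of the canonical local filtration is assumed at every $n\in\Zset$, and the tower property $E_{\cA_0}E_{\cA_{(-\infty,1]}}=E_{\cA_0}$ is legitimate since all the $\cA_I$ are $\psi$-conditioned by the definition of a local filtration. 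Your closing remark about preferring the translate at $n=1$ over the square at $n=0$ is a sensible observation but not essential; one can also run the usual argument through $E_{\cA_{(-\infty,0]}}$ after first pulling $\alpha^{n+1}(a)$ into $\cA_{[0,\infty)}$.
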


We close this section by providing a noncommutative notion of operator-valued Bernoulli
shifts. The definition of such shifts stems from
investigations of K\"ummerer on the structure of noncommutative Markov processes in \cite{Ku85},
and such shifts can also be seen to emerge from the noncommutative extended de Finetti theorem in
\cite{Ko10}.

In the following, $\cM^\beta := \{x \in \cM \mid \beta(x) = x\}$ denotes the fixed point algebra of $\beta \in \Aut(\cM,\psi)$.
Note that $\cM^\beta$ is automatically a $\psi$-conditioned von Neumann subalgebra.

\begin{Definition} 
The minimal stationary process $(\cM,\psi, \beta, \cB_0)$
with canonical local filtration $\big\{\cB_I = \bigvee_{i \in I} \beta_0^i(\cB_0)\big\}_{I \in \cI(\Zset)}$
is called a \emph{bilateral
noncommutative Bernoulli shift} with \emph{generator}~$\cB_0$ if $\cM^{\beta}
\subset \cB_0$ and
\[
	\begin{matrix}
	\cB_{I} &\subset &\cM\\
	\cup & & \cup \\
	\cM^{\beta} & \subset & \cB_{J}
	\end{matrix}
\]
forms a commuting square for any $I, J \in \cI(\Zset)$ with $I \cap J = \varnothing$.
\end{Definition}

It is easy to see that a noncommutative Bernoulli shift $( \cM,\psi, \beta, \cB_0)$ is a minimal
stationary Markov process where the corresponding transition operator $\iota_0^*\beta \iota_0$
is a conditional expectation (onto $\cM^{\beta}$, the fixed point algebra of $\beta$).
Here $\iota_0$ denotes the inclusion map of $\cB_0$ into $\cM$.

\section[Markovianity from representations of F]{Markovianity from representations of $\boldsymbol F$} \label{section:Mark-From-Rep}

We show that bilateral stationary Markov processes can be obtained from representations of the Thompson group $F$ in the automorphisms of a noncommutative probability space. Most of the results in this section follow closely those of \cite[Section 4]{KKW20}, suitably adapted to the bilateral case.

Let us fix some notation, as it will be used throughout this section. We assume that the
probability space $(\cM,\psi)$ is equipped with the representation $\rho \colon F \to
\Aut(\cM,\psi)$. For brevity of notion, especially in proofs, the represented generators of $F$
are also denoted by
\[
\alpha_n := \rho(g_n) \in \Aut(\cM,\psi),
\]
with fixed point algebras given by $\cM^{\alpha_n} := \{x \in \cM \mid \alpha_n (x) = x\}$,
for $0 \le n < \infty$. Of course, $\cM^{\alpha_n} = \cM^{\alpha_n^{-1}}$. Furthermore, the intersections of fixed point algebras
\[
\cM_n := \bigcap_{k \ge n +1} \cM^{\alpha_k}
\]
give the tower of von Neumann subalgebras
\[
\cM^{\rho(F)} \subset \cM_0 \subset \cM_1 \subset \cM_2 \subset \cdots \subset \cM_\infty := \bigvee_{n \ge 0} \cM_n \subset \cM.
\]
From the viewpoint of noncommutative probability theory, this tower provides a filtration of the
noncommutative probability space $(\cM,\psi)$. The canonical local filtration of
a stationary process $(\cM, \psi, \alpha_0, \cA_0)$ will be seen to be a local subfiltration of a local
Markov filtration whenever the $\psi$-conditioned von Neumann subalgebra $\cA_0$ is
well-localized, to be more precise: contained in the intersection of fixed point algebras
$\cM_0$. It is worthwhile to emphasize that, depending on the choice of the generator $\cA_0$,
the canonical local filtration of this stationary process may not be Markovian. Section~\ref{subsection:Markov-F} investigates in detail conditions under which the canonical local filtration
of a stationary process $(\cM, \psi, \alpha_0, \cA_0)$ is Markovian. 
\subsection{Representations with a generating property}
\label{subsection:generating}
An immediate consequence of the relations between generators of the Thompson group $F$ is
the adaptedness of the endomorphism $\alpha_0$ to the tower of (intersected) fixed point
algebras:
\[
\alpha_0(\cM_{n}) \subset \cM_{n+1} \qquad \text{for all}\quad n \in \Nset_0.
\]
To see this, note that if $x \in \cM_n$ and $k \geq n+2$, then $\alpha_k \alpha_0 (x) = \alpha_0 \alpha_{k-1} (x) = \alpha_0 x$. On the other hand, if $x \in \cM_n$ and $k \geq n$, then $\alpha_{k} \alpha_0^{-1} (x) = \alpha_0^{-1} \alpha_{k+1} (x) =\alpha_0^{-1}(x)$. This gives that $\alpha_0^{-1}(\cM_n) \subset \cM_{n-1}$ for $n \geq 1$. Hence, actually $\alpha_0(\cM_n)= \cM_{n+1}$ for all $n \in \Nset_0$. We also note that $\alpha_0^{-1}(\cM_0) \subset \cM_0$.

Thus, generalizing terminology from classical probability, the random variables
\begin{gather*}
\iota_0 := \Id|_{\cM_0} \colon\ \cM_0 \to \cM_0 \subset \cM,
\\
\iota_1 := \alpha_0|_{\cM_0} \colon\ \cM_0 \to \cM_1 \subset \cM,
\\
\iota_2 := \alpha^2_0|_{\cM_0} \colon\ \cM_0 \to \cM_2 \subset \cM,
\\
 \quad\ \ \vdots
 \\
\iota_n := \alpha^n_0|_{\cM_0} \colon\ \cM_0 \to \cM_n \subset \cM
\end{gather*}
are adapted to the filtration $\cM_0 \subset \cM_1 \subset \cM_2 \subset \cdots $, and $\alpha_0$
is the time evolution of the stationary process $(\cM,\psi, \alpha_0, \cM_0)$. An immediate question is whether a representation of the
Thompson group $F$ restricts to the von Neumann subalgebra $\cM_\infty$.

\begin{Definition}\label{definition:generating}
The representation $\rho \colon F \to \Aut(\cM,\psi)$ is said to have the \emph{generating
property} if $\cM_\infty = \cM$.
\end{Definition}

As shown in Proposition~\ref{proposition:generating-property} below, this generating property
entails that each intersected fixed point algebra $\cM_n = \bigcap_{k \ge n+1} \cM^{\alpha_k}$
equals the single fixed point algebra $\cM^{\alpha_{n+1}}$. Thus the generating property
tremendously simplifies the form of the tower $\cM_0 \subset \cM_1 \subset \cdots $, and our next
result shows that this can always be achieved by restriction.

\begin{Proposition}\label{proposition:generatingrestriction}
The representation $\rho\colon F\to \Aut(\cM,\psi)$ restricts to the generating representation
$\rho_{\gen}\colon F\to \Aut(\cM_{\infty},\psi_\infty)$ such that $\alpha_n (\cM_\infty) \subset
\cM_\infty$ and $E_{\cM_\infty} E_{\cM^{\alpha_n}} = E_{\cM^{\alpha_n}} E_{\cM_\infty}$ for all
$n \in \Nset_0$. Here $\psi_\infty$ denotes the restriction of the state $\psi$ to $\cM_\infty$.
$E_{\cM^{\alpha_n}}$ and $E_{\cM_{\infty}}$ denote the unique $\psi$-preserving normal
conditional expectations onto $\cM^{\alpha_n}$ and $\cM_{\infty}$ respectively.
\end{Proposition}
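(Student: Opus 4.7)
The plan is to first establish that each represented generator $\alpha_n$ and its inverse preserve the tower $\cM_0 \subset \cM_1 \subset \cdots$, so that restriction produces a well-defined $\rho_{\gen}\colon F \to \Aut(\cM_\infty,\psi_\infty)$, and then to deduce the commutation $E_{\cM_\infty}E_{\cM^{\alpha_n}} = E_{\cM^{\alpha_n}}E_{\cM_\infty}$ as a commuting-square consequence of Proposition~\ref{proposition:cs}. Both steps rest on the non-symmetric Thompson relation $\alpha_k \alpha_\ell = \alpha_{\ell+1}\alpha_k$ for $k<\ell$, already used in the adaptedness computation for $\alpha_0$ immediately before Definition~\ref{definition:generating}.

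For the invariance, fix $m,n \in \Nset_0$ and $x \in \cM_m$. Rewriting the relation as $\alpha_j \alpha_n = \alpha_n \alpha_{j-1}$ (valid when $j>n+1$), one sees that whenever $j \ge \max(n,m)+2$ both $j \ge n+2$ and $j-1 \ge m+1$ hold, so $\alpha_j\alpha_n(x) = \alpha_n\alpha_{j-1}(x) = \alpha_n(x)$. Consequently $\alpha_n(x) \in \bigcap_{j\ge \max(n,m)+2}\cM^{\alpha_j} = \cM_{\max(n,m)+1} \subset \cM_\infty$. The mirror identity $\alpha_j\alpha_n^{-1} = \alpha_n^{-1}\alpha_{j+1}$ for $j>n$ lands $\alpha_n^{-1}(\cM_m)$ in an analogous layer of the tower. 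Passing to $\sigma$-weak closures yields $\alpha_n(\cM_\infty) \subset \cM_\infty$ and $\alpha_n^{-1}(\cM_\infty) \subset \cM_\infty$, hence $\alpha_n(\cM_\infty) = \cM_\infty$. Each $\cM^{\alpha_k}$ is $\psi$-conditioned because $\alpha_k$ commutes with $\sigma_t^\psi$, and $\psi$-conditionedness is preserved under intersections and increasing $\sigma$-weak unions, so $\cM_\infty$ is $\psi$-conditioned and $\rho_{\gen}(g_n) := \alpha_n|_{\cM_\infty}$ belongs to $\Aut(\cM_\infty,\psi_\infty)$. The Thompson relations pass to these restrictions, so $\rho_{\gen}$ is indeed a representation; and since $\bigcap_{k\ge n+1}(\cM_\infty \cap \cM^{\alpha_k}) = \cM_n$, one has $\bigvee_n\cM_n = \cM_\infty$, which verifies the generating property for $\rho_{\gen}$.

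The commutation of conditional expectations is then quick. The invariance $\alpha_n(\cM_\infty) = \cM_\infty$ together with $\psi\circ\alpha_n = \psi$ implies, by uniqueness of the $\psi$-preserving normal conditional expectation onto $\cM_\infty$, that $\alpha_n E_{\cM_\infty} = E_{\cM_\infty}\alpha_n$. Hence for $y \in \cM^{\alpha_n}$, $\alpha_n\bigl(E_{\cM_\infty}(y)\bigr) = E_{\cM_\infty}(\alpha_n(y)) = E_{\cM_\infty}(y)$, so $E_{\cM_\infty}(\cM^{\alpha_n}) \subset \cM^{\alpha_n} \cap \cM_\infty$. Invoking Proposition~\ref{proposition:cs} with $\cM_0 := \cM^{\alpha_n} \cap \cM_\infty$, $\cM_1 := \cM_\infty$, $\cM_2 := \cM^{\alpha_n}$, condition $(iii)$ is verified and condition $(iv)$ delivers the desired commutation. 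The only genuine work lies in the invariance step, where the non-symmetric Thompson relation forces slightly different index bookkeeping for $\alpha_n$ and $\alpha_n^{-1}$; once that is settled, the remaining assertions reduce to the uniqueness principle for conditional expectations and a direct application of Proposition~\ref{proposition:cs}.
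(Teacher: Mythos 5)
Your argument is correct, and its first half coincides with the paper's own proof: you use the relation $\alpha_k\alpha_\ell=\alpha_{\ell+1}\alpha_k$ (for $k<\ell$) and its rearrangements to show that each $\alpha_n$ and each $\alpha_n^{-1}$ maps every layer $\cM_m$ of the tower into some layer, then pass to the $\sigma$-weak closure to get invariance of $\cM_\infty$; the index bookkeeping differs only cosmetically (you obtain $\alpha_n(\cM_m)\subset\cM_{\max(n,m)+1}$, the paper the slightly sharper $\alpha_i(\cM_n)\subset\cM_{n+1}$, but either suffices). The genuine divergence is in the second half. The paper derives $E_{\cM_\infty}\alpha_nE_{\cM_\infty}=\alpha_nE_{\cM_\infty}$ from invariance and then obtains the commutation $E_{\cM_\infty}E_{\cM^{\alpha_n}}=E_{\cM^{\alpha_n}}E_{\cM_\infty}$ by writing $E_{\cM^{\alpha_n}}=\lim_{N\to\infty}\frac{1}{N}\sum_{i=1}^N\alpha_n^i$ via the mean ergodic theorem and ``routine arguments''. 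You instead upgrade the invariance to the exact intertwining $\alpha_nE_{\cM_\infty}=E_{\cM_\infty}\alpha_n$ by the uniqueness of the $\psi$-preserving normal conditional expectation onto $\cM_\infty=\alpha_n(\cM_\infty)$, deduce $E_{\cM_\infty}\bigl(\cM^{\alpha_n}\bigr)=\cM^{\alpha_n}\cap\cM_\infty$ (the reverse inclusion being trivial), and then invoke the equivalence of conditions $(iii)$ and $(iv)$ in Proposition~\ref{proposition:cs} to read off the commutation. This avoids the ergodic averaging entirely, makes the underlying commuting square explicit (essentially anticipating Corollary~\ref{corollary:commsquare}), and fills in the step the paper leaves as ``routine''; the paper's route is shorter on the page and reuses the same ergodic limit that drives Lemma~\ref{lemma:generating}. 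Both are sound, and your appeal to Proposition~\ref{proposition:cs} requires only the $\psi$-conditionedness of $\cM^{\alpha_n}\cap\cM_\infty$, which you correctly justify.
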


\begin{proof}
We show that $\alpha_i (\cM_n ) \subset \cM_{n+1}$ for all $i,n \ge 0$. Let $x \in \cM_n$. If
$i \ge n+1$ then $\alpha_i (x) = x$ is immediate from the definition of $\cM_n$. If $i < n+1$
then, using the relations for the generators of the Thompson group,
$\alpha_i(x)= \alpha_i\alpha_{k+1} (x) = \alpha_{k+2}\alpha_i(x)$
for any $k \ge n$, thus $\alpha_i(x) \in \cM_{n+1}$. Consequently, $\alpha_i$ maps
$\bigcup_{n \ge 0} \cM_n$ into itself for any $i \in \Nset_0$.
It is also easily verified that $\alpha_i^{-1}(\cM_n) \subset \cM_n$ for all $i$ and $n \ge 0$.
Now a standard approximation
argument shows that $\cM_\infty$ is invariant under $\alpha_i$ and $\alpha_i^{-1}$ for any $i \in \Nset_0$.
Consequently, the representation~$\rho$ restricts to $\cM_\infty$ and, of course, this restriction
$\rho_{\gen}$ has the generating property.

Since $\cM_\infty$ is globally invariant under the modular automorphism group of $(\cM,\psi)$,
there exists the (unique) $\psi$-preserving normal conditional expectation $E_{\cM_\infty}$ from
$\cM$ onto $\cM_\infty$. In~particular, $\rho_{\gen}(g_n) = \alpha_n|_{\cM_\infty}$ commutes
with the modular automorphism group of $(\cM_\infty, \psi_\infty)$ which ensures
$\rho_{\gen}(g_n)\in \Aut(\cM_{\infty},\psi_\infty)$. Finally, that $E_{\cM_\infty}$ and
$E_{\cM^{\alpha_n}}$ commute is concluded from
\[
E_{\cM_\infty} \alpha_n E_{\cM_\infty} = \alpha_n E_{\cM_\infty},
\]
which implies $ E_{\cM^{\alpha_n}} E_{\cM_\infty} = E_{\cM_\infty}E_{\cM^{\alpha_n}}$ by
routine arguments, and an application of the mean ergodic theorem (see for example
\cite[Theorem 8.3]{Ko10}),
\[
 E_{\cM^{\alpha_n}} = \lim_{N \to \infty} \frac{1}{N} \sum_{i=1}^{N} \alpha_n^i,
\]
where the limit is taken in the pointwise strong operator topology.
\end{proof}

\begin{Lemma}\label{lemma:generating}
With the notations as above, $\cM_k=\cM^{\alpha_{k+1}}\cap \cM_{\infty}$ for all $k\in \Nset_0$.
\end{Lemma}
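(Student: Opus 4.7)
The easy inclusion $\cM_k \subset \cM^{\alpha_{k+1}} \cap \cM_\infty$ follows at once from the definitions, since $\cM_k = \bigcap_{j \ge k+1} \cM^{\alpha_j} \subset \cM^{\alpha_{k+1}}$ and $\cM_k \subset \bigvee_{n \ge 0} \cM_n = \cM_\infty$. For the reverse inclusion, I fix $x \in \cM^{\alpha_{k+1}} \cap \cM_\infty$ and aim to show $\alpha_j(x) = x$ for every $j \ge k+2$.

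Two ingredients are combined. First, the Thompson relation $g_{k+1} g_{j-1} = g_j g_{k+1}$, valid for $j \ge k+3$, translates under $\rho$ to $\alpha_j = \alpha_{k+1}\alpha_{j-1}\alpha_{k+1}^{-1}$. Evaluating at $x$ and using $\alpha_{k+1}^{-1}(x) = x$ gives $\alpha_j(x) = \alpha_{k+1}(\alpha_{j-1}(x))$, and iterating this identity down to index $k+2$ yields
\[
\alpha_j(x) = \alpha_{k+1}^{\,j-k-2}\bigl(\alpha_{k+2}(x)\bigr) \qquad \text{for all } j \ge k+2.
\]
Second, since $x \in \cM_\infty = \bigvee_{n \ge 0} \cM_n$, for every $\varepsilon > 0$ I can pick $n$ and $y \in \cM_n$ with $\|x - y\|_\psi < \varepsilon$ in the GNS $2$-norm $\|a\|_\psi := \psi(a^*a)^{1/2}$. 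For $j \ge n+1$ the element $y$ is fixed by $\alpha_j$, and $\psi \circ \alpha_j = \psi$ makes $\alpha_j$ isometric in $\|\cdot\|_\psi$; thus $\|\alpha_j(x) - x\|_\psi < 2\varepsilon$ for all $j \ge n+1$, and consequently $\alpha_j(x) \to x$ in $2$-norm as $j \to \infty$.

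Putting the two ingredients together, the sequence $\alpha_{k+1}^m(z)$ with $z := \alpha_{k+2}(x)$ converges in $2$-norm to $x$. Because $\alpha_{k+1}$ is an isometry, $\|\alpha_{k+1}^{m+1}(z) - \alpha_{k+1}^m(z)\|_\psi = \|\alpha_{k+1}(z) - z\|_\psi$ is constant in $m$; the Cauchy property of the convergent sequence forces this constant to vanish, so $\alpha_{k+1}(z) = z$. Then $\alpha_{k+1}^m(z) = z$ for every $m$, so $z = x$ in the GNS Hilbert space and, by faithfulness of $\psi$, in $\cM$. Hence $\alpha_{k+2}(x) = x$; substituting this back into the iterated formula and using $x \in \cM^{\alpha_{k+1}}$ gives $\alpha_j(x) = x$ for every $j \ge k+2$, so $x \in \cM_k$.

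The main obstacle is bridging the two levels of convergence: approximation by elements of $\cM_n$ only controls $\alpha_j$ for $j$ large (depending on the approximation error), whereas the goal requires pointwise fixedness under each individual $\alpha_j$ with $j \ge k+2$. The Thompson relation is exactly the mechanism that converts ``$j \to \infty$'' $2$-norm convergence into convergence of a single $\alpha_{k+1}$-orbit, after which the isometric/fixed-point dichotomy closes the gap. A mean-ergodic variant yielding $E_{\cM^{\alpha_{k+1}}}(\alpha_{k+2}(x)) = x$ in the style of Proposition~\ref{proposition:generatingrestriction} would also work, but the direct Cauchy argument seems cleaner.
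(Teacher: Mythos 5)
Your proof is correct, but it takes a genuinely different route from the paper's. The paper reduces the reverse inclusion to the operator identity $Q_nQ_k|_{\cM_\infty}=Q_k|_{\cM_\infty}$ for $k<n$ (where $Q_n=E_{\cM^{\alpha_n}}$), passes to the equivalent form $Q_kQ_nQ_k|_{\cM_\infty}=Q_k|_{\cM_\infty}$ via a state-positivity trick, and proves that by writing both expectations as mean-ergodic Ces\`aro limits and pushing $\alpha_k^i$ past $\alpha_n^j$ with the Thompson relations, ending with $\frac1M\sum_i Q_{n+i}Q_k\to Q_k$. You instead argue elementwise: the relation $\alpha_j=\alpha_{k+1}\alpha_{j-1}\alpha_{k+1}^{-1}$ collapses the whole tail $\{\alpha_j(x)\}_{j\ge k+2}$ into the single orbit $\alpha_{k+1}^m(\alpha_{k+2}(x))$, the $2$-norm density of $\bigcup_n\cM_n$ in $\cM_\infty$ gives $\alpha_j(x)\to x$, and the isometry/Cauchy dichotomy then forces $\alpha_{k+2}(x)=x$. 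Both proofs rest on the same two inputs (the Thompson relations and the fact that elements of $\cM_\infty$ are asymptotically fixed by $\alpha_j$ as $j\to\infty$), but yours avoids the mean ergodic theorem and conditional expectations entirely, and it handles cleanly the tail-stabilization step that the paper states somewhat informally (the claim that $Q_{n+i}Q_k(x)=Q_k(x)$ for large $i$ really requires an approximation, since $Q_k(x)\in\cM_\infty$ need not lie in any single $\cM_m$ -- exactly the point your $\varepsilon$-argument addresses). What the paper's route buys in exchange is the identity $Q_nQ_k=Q_k$ on $\cM_\infty$ itself, which is reused immediately afterwards (Corollary~\ref{corollary:commsquare}) to produce commuting squares; your argument yields the set-theoretic equality of the lemma but not that operator identity as a byproduct.
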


\begin{proof}
For the sake of brevity of notation, let $Q_n=E_{\cM^{\alpha_n}}$ denote the $\psi$-preserving
normal conditional expectation from $\cM$ onto $\cM^{\alpha_n}$. Let us first make the following observation: if $x\in \cM_{\infty}$, then $Q_n(x) \in \cM_{\infty}$ for every $n \in \Nset_0$. Indeed, by Proposition~\ref{proposition:generatingrestriction},
$x\in \cM_\infty$ implies $\alpha_n(x) \in \cM_\infty$ and thus
$\frac{1}{M} \sum_{i=1}^M \alpha_n^i (x)\in \cM_\infty$
for all $M \ge 1$. As $Q_n(x) = \lim_{M \to \infty} \frac{1}{M} \sum_{i=1}^M \alpha_k^i(x)$ in the strong operator topology, this ensures $Q_n(x) \in \cM_\infty$.

By the definition of $\cM_k$
and $\cM_\infty$, it is clear that $\cM_k\subset\cM^{\alpha_{k+1}}\cap \cM_{\infty}$. In order to
show the reverse inclusion, it suffices to show that $Q_n Q_k |_{\cM_\infty}=Q_k |_{\cM_\infty}$ for $
0\leq k<n<\infty$. We claim that, for $0\leq k < n$,
\[
	Q_n Q_k |_{\cM_\infty} = Q_k |_{\cM_\infty} \, \Longleftrightarrow \, Q_k Q_n Q_k |_{\cM_\infty} = Q_k |_{\cM_\infty}.
\]
Indeed this equivalence is immediate from
\begin{align*}
	\psi\big( (Q_nQ_k - Q_k)(y^*) (Q_nQ_k - Q_k)(x)\big)
	& = \psi\big( y^* (Q_kQ_n - Q_k) (Q_nQ_k - Q_k)(x)\big) \\
	& = \psi\big( y^* (Q_k - Q_kQ_n Q_k)(x)\big)
\end{align*}
for all $x,y \in \cM_\infty$. We are left to prove $Q_k Q_n Q_k |_{\cM_\infty}= Q_k
|_{\cM_\infty}$ for $k < n$. For this purpose we express the conditional
expectations $Q_k$ and $Q_n$ as mean ergodic limits in the pointwise strong operator topology
and calculate
\begin{align*}
Q_{k}Q_{n}Q_{k}|_{\cM_\infty}
&=\lim_{M\to \infty}\lim_{N\to \infty}
 \frac{1}{MN}\sum_{i=1}^{M}\sum_{j=1}^{N} \alpha_{k}^{i}\alpha_{n}^{j}Q_{k}|_{\cM_\infty}\\
&=\lim_{M\to \infty}\lim_{N\to\infty}
 \frac{1}{MN} \sum_{i=1}^{M}\sum_{j=1}^{N}\alpha_{n+i}^{j}\alpha_{k}^{i}Q_{k}|_{\cM_\infty}\\
&=\lim_{M\to \infty}\lim_{N\to\infty}\frac{1}{MN}
 \sum_{i=1}^{M}\sum_{j=1}^{N}\alpha_{n+i}^{j}Q_{k}|_{\cM_\infty}\\
&=\lim_{M\to \infty}\frac{1}{M}
 \sum_{i=1}^{M}Q_{n+i}Q_{k}|_{\cM_\infty} = Q_{k}|_{\cM_{\infty}}.
\end{align*}
The last equality is ensured as $x \in \cM_{\infty}$ implies that $Q_k(x) \in \cM_{\infty}$, hence as
$\cM^{\rho(F)} \subset \cM_0 \subset \cdots \subset \cM_{\infty} =\vee_{n \geq 0} \cM_n$, there exists sufficiently large $i_0$ such that $Q_{n+i} Q_k(x)=Q_k (x)$ for all $i \geq i_0$. Thus
{\samepage
\[
\lim_{M\to\infty}\frac{1}{M}\sum_{i=1}^{M}Q_{n+i}Q_k|_{\cM_{\infty}}=\operatorname{Id}Q_k|_{\cM_{\infty}}
\]
in the pointwise strong operator topology.}
\end{proof}

\begin{Corollary}\label{corollary:commsquare}
With notations as introduced at the beginning of the present Section~$\ref{section:Mark-From-Rep}$, the following set of inclusions forms a commuting square for every $n\in \Nset_0$:
\[
	\begin{matrix}
	\cM^{\alpha_{n+1}} &\subset &\cM\\
	\cup & & \cup \\
	\cM_{n} & \subset & \cM_{\infty}.
	\end{matrix}
\]
\end{Corollary}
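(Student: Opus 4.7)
The plan is to verify condition (iv) of Proposition \ref{proposition:cs} for the four algebras $\cM_n$, $\cM^{\alpha_{n+1}}$, $\cM_\infty$, $\cM$, appealing to the two results that immediately precede the corollary.

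First I would check that the hypotheses of Proposition \ref{proposition:cs} are in place. Each $\cM^{\alpha_k}$ is $\psi$-conditioned because $\alpha_k \in \Aut(\cM,\psi)$ commutes with the modular automorphism group, hence $\cM_n = \bigcap_{k \ge n+1} \cM^{\alpha_k}$ is $\psi$-conditioned as an intersection of $\psi$-conditioned subalgebras. The subalgebra $\cM_\infty$ is $\psi$-conditioned by Proposition \ref{proposition:generatingrestriction}, which asserts the existence of the $\psi$-preserving conditional expectation $E_{\cM_\infty}$. From the definition $\cM_n = \bigcap_{k \ge n+1} \cM^{\alpha_k}$ we have $\cM_n \subset \cM^{\alpha_{n+1}}$, and clearly $\cM_n \subset \cM_\infty$ since $\cM_n$ is one of the joined subalgebras in $\cM_\infty = \bigvee_{m \ge 0}\cM_m$. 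Thus $\cM_n \subset \cM^{\alpha_{n+1}} \cap \cM_\infty$ as required.

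Next I would invoke the two ingredients. Proposition \ref{proposition:generatingrestriction} provides exactly the commutation relation
\[
E_{\cM^{\alpha_{n+1}}} E_{\cM_\infty} = E_{\cM_\infty} E_{\cM^{\alpha_{n+1}}},
\]
while Lemma \ref{lemma:generating} (applied with $k=n$) yields $\cM^{\alpha_{n+1}} \cap \cM_\infty = \cM_n$. These are precisely the two conditions appearing in Proposition \ref{proposition:cs}\,(iv), so that proposition immediately implies that the prescribed square is a commuting square.

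Since the two hard inputs are already established upstream, there is no genuine obstacle: the corollary is a direct packaging of Proposition \ref{proposition:generatingrestriction} and Lemma \ref{lemma:generating} through the equivalences in Proposition \ref{proposition:cs}. The only thing to be a little careful about is matching the hypothesis $\cM_0 \subset \cM_1 \cap \cM_2$ of Proposition \ref{proposition:cs}, which is why I first explicitly verify the inclusion $\cM_n \subset \cM^{\alpha_{n+1}} \cap \cM_\infty$ before applying condition (iv).
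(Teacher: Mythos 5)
Your proof is correct and is essentially the paper's own argument: both combine the commutation $E_{\cM^{\alpha_{n+1}}}E_{\cM_\infty}=E_{\cM_\infty}E_{\cM^{\alpha_{n+1}}}$ from Proposition~\ref{proposition:generatingrestriction} with the identity $\cM_n=\cM^{\alpha_{n+1}}\cap\cM_\infty$ from Lemma~\ref{lemma:generating}, and then apply condition $(iv)$ of Proposition~\ref{proposition:cs}. The additional checks you spell out (that the subalgebras are $\psi$-conditioned and that $\cM_n\subset\cM^{\alpha_{n+1}}\cap\cM_\infty$) are left implicit in the paper but are accurate and harmless.
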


{\sloppy\begin{proof}
Let $Q_n$ and $E_{\cM_{\infty}}$ be the $\psi$-preserving normal conditional expectations from
$\cM$ onto~$\cM^{\alpha_n}$ and $\cM_{\infty}$ respectively for $n\in \Nset_0$. For
$n\in \Nset_0$, by Proposition~\ref{proposition:generatingrestriction},
$Q_{n+1}E_{\cM_{\infty}}=E_{\cM_{\infty}}Q_{n+1}$ and by Lemma \ref{lemma:generating},
$\cM_n=\cM^{\alpha_{n+1}}\cap \cM_{\infty}$. By $(iv)$ of Proposition~\ref{proposition:cs},
we get a~commuting square.
\end{proof}}

\begin{Proposition}\label{proposition:generating-property}
If the representation $\rho\colon F\to \Aut(\cM,\psi)$ has the generating property then the
following equality holds for all $n \in \Nset_0$:
\[
\cM_n = \cM^{\rho(g_{n+1})}.
\]
In other words, one has the tower of fixed point algebras
\[
\cM^{\rho(F^+)} \subset \cM^{\rho(g_{0})} \subset \cM^{\rho(g_{1})} \subset \cM^{\rho(g_{2})}
\subset \cdots \subset \cM = \bigvee_{n \ge 0} \cM^{\rho(g_n)}.
\]
\end{Proposition}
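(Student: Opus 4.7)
The plan is to extract the main equality directly from Lemma~\ref{lemma:generating}, and then derive the tower from it. Lemma~\ref{lemma:generating} asserts $\cM_k = \cM^{\rho(g_{k+1})} \cap \cM_\infty$ for every $k \in \Nset_0$, with no hypothesis on $\cM_\infty$. Since the generating property forces $\cM_\infty = \cM$ and $\cM^{\rho(g_{k+1})}$ is already a subalgebra of $\cM$, the intersection collapses to $\cM^{\rho(g_{k+1})}$; this gives the claimed equality $\cM_k = \cM^{\rho(g_{k+1})}$ for all $k \in \Nset_0$.

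The tower will then follow almost for free. The inclusions $\cM^{\rho(g_n)} \subset \cM^{\rho(g_{n+1})}$ for $n \ge 1$ are obtained by rewriting the trivial chain $\cM_{n-1} \subset \cM_n$ (which comes from the definition $\cM_n = \bigcap_{k \ge n+1} \cM^{\rho(g_k)}$ introduced at the start of Section~\ref{section:Mark-From-Rep}) via the equality just established. The leftmost inclusion $\cM^{\rho(F^+)} \subset \cM^{\rho(g_0)}$ is definitional, since $\cM^{\rho(F^+)} = \bigcap_{n \ge 0} \cM^{\rho(g_n)}$. The final equality $\cM = \bigvee_{n \ge 0} \cM^{\rho(g_n)}$ reduces to $\bigvee_{n \ge 1} \cM^{\rho(g_n)} = \bigvee_{n \ge 0} \cM_n = \cM_\infty = \cM$.

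The only inclusion in the tower not immediate from the main equality is $\cM^{\rho(g_0)} \subset \cM^{\rho(g_1)}$, and this is the step I expect to be the real obstacle. To handle it, I would iterate the Thompson-group relation $\alpha_0 \alpha_\ell = \alpha_{\ell+1} \alpha_0$ (valid for $\ell \ge 1$, with $\alpha_n := \rho(g_n)$) to show that any $x \in \cM^{\alpha_0}$ satisfies $\alpha_n(x) = \alpha_0^{n-1}\alpha_1(x)$ for all $n \ge 1$. A density argument inside $\cM = \bigvee_n \cM_n$, approximating $x$ by elements of $\cM_N$ on which $\alpha_k$ acts as the identity for $k \ge N+1$, then yields $\alpha_n(x) \to x$ in $L^2(\psi)$ as $n \to \infty$. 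Hence the sequence $\alpha_0^{n-1}\alpha_1(x)$ is $L^2$-Cauchy; since $\alpha_0$ is an $L^2$-isometry, this forces $\alpha_0^m\alpha_1(x) = \alpha_1(x)$ for every $m \ge 0$, and then passing to the limit along this now-constant orbit gives $\alpha_1(x) = x$, placing $x$ in $\cM^{\rho(g_1)}$ as required.
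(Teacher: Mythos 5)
Your derivation of the main equality $\cM_n = \cM^{\rho(g_{n+1})}$ is exactly the paper's proof: the paper also just combines Lemma~\ref{lemma:generating} with $\cM_\infty = \cM$ and stops there, treating the tower as a mere restatement. Where you genuinely go beyond the paper is in isolating the inclusion $\cM^{\rho(g_0)} \subset \cM^{\rho(g_1)}$, which indeed does \emph{not} follow formally from $\cM_n = \cM^{\rho(g_{n+1})}$ together with $\cM_0 \subset \cM_1 \subset \cdots$ (the algebra $\cM^{\rho(g_0)}$ is not of the form $\cM_n$), and which the paper's two-line proof silently passes over. Your argument for it is correct: the relation $\alpha_0\alpha_\ell = \alpha_{\ell+1}\alpha_0$ gives $\alpha_n(x) = \alpha_0^{n-1}\alpha_1(x)$ for $x \in \cM^{\alpha_0}$, the generating property plus a Kaplansky-type bounded approximation gives $\alpha_n(x) \to x$ in $\|\cdot\|_\psi$, and since $\|\alpha_0^{n}\alpha_1(x)-\alpha_0^{n-1}\alpha_1(x)\|_\psi = \|\alpha_0\alpha_1(x)-\alpha_1(x)\|_\psi$ is constant, Cauchyness forces $\alpha_0\alpha_1(x)=\alpha_1(x)$ (by faithfulness of $\psi$) and then $\alpha_1(x)=x$. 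Note your computation actually proves more, namely $\alpha_n(x)=\alpha_0^{n-1}(x)=x$ for all $n$, so $\cM^{\rho(g_0)} = \cM^{\rho(F^+)}$ under the generating hypothesis, i.e., the first two terms of the tower coincide; this is consistent with the Bernoulli-shift example of Section~\ref{subsection:Example}. In short: same route as the paper for the stated equality, plus a correct supplementary argument that fills a step the paper leaves implicit.
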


\begin{proof}
If the representation $\rho$ is generating, then $\cM_\infty=\cM$. Hence
$\cM_n=\cM^{\alpha_{n+1}}$ for all $n\in \Nset_0$ as a consequence of Lemma
\ref{lemma:generating}.
\end{proof}

The following intertwining property will be crucial for obtaining stationary Markov processes from
representations of the Thompson group $F$.

\begin{Proposition}
Suppose $\rho \colon F \to \Aut(\cM,\psi)$ is a $($not necessarily generating$)$ representation
of $F$. Then with $\alpha_n=\rho(g_n)$, the following equality holds:
\[
\alpha_k Q_n = Q_{n+1} \alpha_k \qquad \text{for all}\quad 0 \le k < n < \infty.
\]
 Here $Q_n$ denotes the $\psi$-preserving normal conditional
expectation from $\cM$ onto the fixed point algebra $\cM^{\alpha_n}$ of the represented
generator $\alpha_n \in \Aut(\cM,\psi)$.
\end{Proposition}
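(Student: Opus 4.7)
My plan is to deduce the intertwining from the Thompson relation $g_k g_n = g_{n+1} g_k$ (valid for $0 \le k < n$), applied to the representation $\rho$, combined with the conjugation-invariance of conditional expectations onto fixed point algebras.

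First I would translate the group relation into the algebra: for $0 \le k < n$ one has $\alpha_k \alpha_n = \alpha_{n+1} \alpha_k$, which rearranges as $\alpha_{n+1} = \alpha_k \alpha_n \alpha_k^{-1}$. Applying this (and its inverse) inductively gives
\[
\alpha_k \alpha_n^{\,i} = \alpha_{n+1}^{\,i} \alpha_k \qquad \text{for all } i \in \Zset.
\]
In particular, $\alpha_k$ carries $\cM^{\alpha_n}$ onto $\cM^{\alpha_{n+1}}$: if $\alpha_n(x) = x$, then $\alpha_{n+1}(\alpha_k(x)) = \alpha_k \alpha_n(x) = \alpha_k(x)$, and the same argument with $k$, $n$, $n+1$ interchanged via $\alpha_k^{-1}$ gives the reverse inclusion.

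Next I would finish the proof in either of two essentially equivalent ways. The cleanest is a uniqueness argument: $Q_{n+1}$ is the unique $\psi$-preserving normal conditional expectation from $\cM$ onto $\cM^{\alpha_{n+1}}$, and one checks directly that $\alpha_k Q_n \alpha_k^{-1}$ has exactly these properties — it is a normal completely positive unital projection (since $\alpha_k \in \Aut(\cM,\psi)$) with range $\alpha_k(\cM^{\alpha_n}) = \cM^{\alpha_{n+1}}$, and it preserves $\psi$ because both $\alpha_k$ and $Q_n$ do. Therefore $\alpha_k Q_n \alpha_k^{-1} = Q_{n+1}$, which rearranges to the required identity. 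Alternatively, one can exploit the mean ergodic representation used already in the proof of Proposition~\ref{proposition:generatingrestriction}: writing $Q_n = \lim_{N \to \infty} \frac{1}{N} \sum_{i=1}^N \alpha_n^i$ pointwise in the strong operator topology, applying $\alpha_k$ (which is SOT-continuous on norm-bounded sets) inside the limit and using the identity $\alpha_k \alpha_n^i = \alpha_{n+1}^i \alpha_k$ yields
\[
\alpha_k Q_n = \lim_{N \to \infty} \frac{1}{N} \sum_{i=1}^N \alpha_{n+1}^i \alpha_k = Q_{n+1} \alpha_k.
\]

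There is no real obstacle; the only subtle point is verifying that $\cM^{\alpha_{n+1}}$ is precisely the image of $\cM^{\alpha_n}$ under $\alpha_k$ (so that the conjugated conditional expectation has the advertised range), and this is exactly what the Thompson relation $\alpha_{n+1} = \alpha_k \alpha_n \alpha_k^{-1}$ gives. The condition $k < n$ is used only here: without it one cannot invoke the relation in this direction, and indeed $\alpha_k$ need not commute with the conditional expectation onto $\cM^{\alpha_n}$ when $k \ge n$.
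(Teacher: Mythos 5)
Your proposal is correct, and your second argument (expressing $Q_n$ as the mean ergodic limit $\lim_{N\to\infty}\frac{1}{N}\sum_{i=1}^N \alpha_n^i$ and pushing $\alpha_k$ through via $\alpha_k\alpha_n^i=\alpha_{n+1}^i\alpha_k$) is precisely the proof given in the paper. The uniqueness-of-conditional-expectation argument you lead with is a valid alternative, but it is not needed beyond what the ergodic computation already delivers.
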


\begin{proof}
An application of the mean ergodic theorem and the relations between the generators of the
Thompson group $F$ yield that, for $k < n$,
\[
\alpha_k Q_n
=\lim_{N \to \infty} \frac{1}{N} \sum_{i=1}^N \alpha_k \alpha_n^i=
\lim_{N \to \infty} \frac{1}{N} \sum_{i=1}^N \alpha_{n+1}^i \alpha_k
= Q_{n+1} \alpha_k.
\]
Here the limits are taken in the pointwise strong operator topology.
\end{proof}

\subsection{Commuting squares and Markovianity for stationary processes}\label{subsection:Markov-F}

Given the representation $\rho \colon F \to \Aut(\cM,\psi)$, with represented generators
$\alpha_n := \rho(g_n)$, for $n \in \Nset_0$,
we recall that
\[
\cM_n = \bigcap_{k \ge n+1} \cM^{\alpha_k},
\]
denotes the intersected fixed point algebras.
Throughout this section, let $\cA_0$ be a $\psi$-conditioned von Neumann subalgebra of $\cM_0$. Then
$(\cM,\psi, \alpha_0, \cA_0)$ is a (bilateral noncommutative) stationary process with
generating algebra $\cA_0$ (as introduced in Definition \ref{definition:process-sequence}). Its canonical local filtration is denoted by
$\cA_\bullet \equiv \{\cA_{I}\}_{I\in \cI(\Zset)}$, where
\[
\cA_I := \bigvee_{i \in I}\alpha_0^i(\cA_0),
\]
and an ``interval'' $I \in \cI(\Zset)$ is written as
$[m,n] := \{i \in \Zset \mid m \le i \le n\}$ or
$[m,\infty) := \{i \in \Zset \mid m \le i \}$ or
$(-\infty, n] := \{i \in \Zset \mid i \le n \}$.
Furthermore, $P_I$ will denote the $\psi$-preserving normal conditional expectation
from $\cM$ onto $\cA_I$. Note that the endomorphism $\alpha_0$ acts compatibly on the local
filtration, i.e.,~$\alpha_0(\cA_I) = \cA_{I+1}$ for all $I \in \cI(\Zset)$, where
$I+1 := \{i+1\mid i \in I\}$.

We record a simple, but important, observation obtained from the relations of $F$ on stationary
processes to which we will frequently appeal.

\begin{Proposition}\label{proposition:fixed-point}
Let $(\cM,\psi,\alpha_0,\cA_0)$ be the $($bilateral noncommutative$)$ stationary process with $\cA_0$ a $\psi$-conditioned subalgebra of $\cM_0$. Then it holds that $\cA_{(-\infty,n]}\subset \cM_n$ for all
$n \in \Nset_0$.
\end{Proposition}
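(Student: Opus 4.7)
The plan is to reduce the assertion to showing, for every $i \le n$ and every $k \ge n+1$, that $\alpha_0^i(\cA_0) \subset \cM^{\alpha_k}$. This is enough because $\cM_n = \bigcap_{k \ge n+1} \cM^{\alpha_k}$ is automatically a von Neumann subalgebra, so it absorbs the generated algebra $\cA_{(-\infty,n]} = \bigvee_{i \le n} \alpha_0^i(\cA_0)$ once it contains each generating block. Concretely I would fix $x \in \cA_0$, $i \le n$, $k \ge n+1$, and try to commute $\alpha_k$ past $\alpha_0^i$ using the Thompson relations until it lands on some $\alpha_m$ with $m \ge 1$, which fixes $x$ because $\cA_0 \subset \cM_0 = \bigcap_{j \ge 1} \cM^{\alpha_j}$.

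For $i \ge 0$, I would use the relation $g_0 g_{k-1} = g_k g_0$ (valid for $k \ge 2$), i.e.\ $\alpha_k \alpha_0 = \alpha_0 \alpha_{k-1}$, and iterate to obtain
\[
\alpha_k \alpha_0^i = \alpha_0^i \alpha_{k-i},
\]
which is legitimate as long as every intermediate index $k-j$ satisfies $k-j \ge 2$, hence as long as $k \ge i+1$; this is guaranteed by $k \ge n+1 \ge i+1$. Applying both sides to $x$ and using $\alpha_{k-i}(x) = x$ (since $k - i \ge 1$) gives $\alpha_k(\alpha_0^i(x)) = \alpha_0^i(x)$.

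For $i < 0$, write $i = -j$ with $j \ge 1$. Rewriting the same relation as $\alpha_m \alpha_0^{-1} = \alpha_0^{-1} \alpha_{m+1}$ (valid for $m \ge 1$) and iterating yields
\[
\alpha_k \alpha_0^{-j} = \alpha_0^{-j} \alpha_{k+j}
\]
for every $k \ge 1$ (the index only grows, so no further restriction is needed). Applying to $x$ and using $\alpha_{k+j}(x) = x$ again gives $\alpha_k(\alpha_0^{-j}(x)) = \alpha_0^{-j}(x)$. Combining both cases, each $\alpha_0^i(\cA_0)$ lies in $\cM^{\alpha_k}$ for all $k \ge n+1$, and the reduction in the first paragraph finishes the proof.

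The main subtlety, rather than a real obstacle, is keeping track of the index conditions under which the braiding relation $\alpha_p \alpha_0 = \alpha_0 \alpha_{p-1}$ is actually available (namely $p \ge 2$, since the Thompson relation $g_0 g_\ell = g_{\ell+1}g_0$ requires $\ell \ge 1$); one has to check that iterating never forces a step outside this range, which is exactly why the hypothesis $k \ge n+1$ together with $i \le n$ suffices in the positive case, and why no extra hypothesis is needed in the negative case.
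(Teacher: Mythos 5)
Your proposal is correct and follows essentially the same route as the paper: both arguments push $\alpha_k$ past the powers of $\alpha_0$ using the relation $\alpha_k\alpha_0^i=\alpha_0^i\alpha_{k-i}$ (separately for nonnegative and negative exponents) and then use $\cA_0\subset\cM_0$ to absorb the resulting generator. Your added bookkeeping of exactly when the relation $\alpha_p\alpha_0=\alpha_0\alpha_{p-1}$ is available is a welcome refinement of detail, not a difference in method.
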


\begin{proof}
As $\cA_0\subset \cM_0$, it holds that $\alpha_{n}(x) = x$ for any $x \in \cA_0$ and
$n \in \Nset$. Thus using the defining relations of $F$ we get for $0\le k \le n < \ell$,
	\[
	\alpha_{\ell} \alpha_0^k(x)
	= \alpha_0^k \alpha_{\ell-k}(x) = \alpha_0^k(x).
	\]
On the other hand, for $k < 0$ and $\ell \ge 1$,
	\[
	\alpha_{\ell} \alpha_0^k (x) = \alpha_0^k \alpha_{\ell-k} (x) = \alpha_0^k (x).
	\]
Hence
\begin{gather*}
\cA_{(-\infty,n]} = \bigvee_{i \in (-\infty, n]}\alpha_0^i(\cA_0) \subset \cM_0 \subset \cM_{n} \qquad \text{for all} \quad n\in \Nset_0. \tag*{\qed}
\end{gather*}
\renewcommand{\qed}{}
\end{proof}

We next observe that the generating property of the representation $\rho$ can be concluded
from the minimality of a stationary process.

\begin{Proposition}\label{proposition:minimality-generating}
Suppose the representation $\rho\colon F\to \Aut(\cM,\psi)$ and $\cA_0\subset \cM_0$ are given. If~the stationary process $(\cM,\psi,\alpha_0,\cA_0)$ is minimal, then $\rho$ is generating.
\end{Proposition}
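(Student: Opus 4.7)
The plan is to argue directly by combining the localization result Proposition~\ref{proposition:fixed-point} with the definition of minimality. Concretely, the chain of set-theoretic containments is short enough to close in one step once the right objects are identified.

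First I would invoke Proposition~\ref{proposition:fixed-point}, which says that under the standing hypothesis $\cA_0 \subset \cM_0$, one has $\cA_{(-\infty,n]} \subset \cM_n$ for every $n \in \Nset_0$. Taking the von Neumann join over $n \ge 0$ yields
\[
\bigvee_{n \in \Nset_0} \cA_{(-\infty,n]} \ \subset \ \bigvee_{n \in \Nset_0} \cM_n \ = \ \cM_\infty.
\]

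Next I would observe that the left-hand side is in fact all of $\cM$. Indeed, the family $\{\cA_{(-\infty,n]}\}_{n \in \Nset_0}$ is increasing in $n$, and for every $i \in \Zset$ we may choose $n \in \Nset_0$ with $i \le n$, so that $\alpha_0^i(\cA_0) \subset \cA_{(-\infty,n]}$. Therefore
\[
\bigvee_{n \in \Nset_0} \cA_{(-\infty,n]} \ \supset \ \bigvee_{i \in \Zset} \alpha_0^i(\cA_0) \ = \ \cM,
\]
where the final equality uses minimality of the stationary process $(\cM,\psi,\alpha_0,\cA_0)$.

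Chaining these two inclusions gives $\cM \subset \cM_\infty \subset \cM$, hence $\cM_\infty = \cM$, which is the generating property of $\rho$ in the sense of Definition~\ref{definition:generating}. There is no real obstacle here; the only point that has to be handled carefully is that minimality refers to the bilateral orbit $\bigvee_{i \in \Zset} \alpha_0^i(\cA_0)$, whereas the tower $\{\cM_n\}$ is indexed by $n \ge 0$. This is resolved precisely by Proposition~\ref{proposition:fixed-point}, which embeds each negative-indexed piece into the same $\cM_n$ via the fixed-point relations encoded in the Thompson group presentation.
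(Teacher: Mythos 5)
Your argument is correct and coincides with the paper's own proof: both invoke Proposition~\ref{proposition:fixed-point} to get $\cA_{(-\infty,n]}\subset\cM_n$, then use minimality to identify $\bigvee_{n\ge 0}\cA_{(-\infty,n]}$ with $\cM$ and conclude $\cM\subset\cM_\infty$. Your extra remark spelling out why the bilateral join is exhausted by the sets $\cA_{(-\infty,n]}$ with $n\ge 0$ is a point the paper leaves implicit, but the route is the same.
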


\begin{proof}
For the stationary process $(\cM,\psi,\alpha_0,\cA_0)$, recall that $\cA_{(-\infty,\infty)}
=\bigvee_{i \in \Zset}\alpha_0^i(\cA_0)$ and minimality implies $\cA_{(-\infty,\infty)}=\cM$.
By Proposition~\ref{proposition:fixed-point}, $\cA_{(-\infty,n]}\subset \cM_n$ for all $n \in \Nset_0$.
Thus $\cM = \bigvee_{n \ge 0}\cA_{(-\infty,n]}\subset \bigvee_{n \ge 0} \cM_n = \cM_\infty$. We
conclude from this that the representation $\rho$ has the generating property,
i.e.,~$\cM_\infty = \cM$.
\end{proof}

In the following results, it is not assumed that the stationary process is minimal or that the
representation $\rho$ is generating unless explicitly mentioned.

\begin{Theorem}\label{theorem:markov-filtration-1}
Suppose $\rho \colon F \to \Aut(\cM,\psi)$ is a representation with $\alpha_n:=\rho(g_n)$ as before. Let $\cA_0 \subset \cM_{0}$ and $\cA_{[0,\infty)}
:= \bigvee_{n \in \Nset_0}\alpha_0^n(\cA_0) $ be von Neumann subalgebras of $(\cM,\psi)$ such
that the inclusions
\[
\begin{matrix}
\cM^{\alpha_1} &\subset &\cM \\
\cup & & \cup \\
\cA_0 & \subset & \cA_{[0,\infty)}
\end{matrix}
\]
form a commuting square. Then the family of von Neumann subalgebras
$\cA_\bullet \equiv \{\cA_{I}^{}\}_{I\in \cI(\Zset)}^{}$, with
\[
\cA_I := \bigvee_{i \in I}\alpha_0^i(\cA_0),
\]
is a local Markov filtration and $(\cM,\psi, \alpha_0, \cA_0)$ is a
$($bilateral$)$ stationary Markov process.
\end{Theorem}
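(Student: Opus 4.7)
The plan is to reduce everything to the single conditional-expectation identity $P_{(-\infty,0]}P_{[0,\infty)} = P_{[0,0]}$ and then invoke Lemma~\ref{lemma:Mark-Suff}. Since $\alpha_0 \in \Aut(\cM,\psi)$ commutes with the modular group, each $\alpha_0^i(\cA_0)$ and hence each $\cA_I$ is $\psi$-conditioned, so all the conditional expectations $P_I$ onto $\cA_I$ exist; similarly $\cM^{\alpha_1}$ is $\psi$-conditioned because $\alpha_1$ is modular. This ensures every map in the argument is well defined.

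The key geometric input is that the filtration $\cA_\bullet$ sits inside the tower of fixed-point algebras in a way compatible with the commuting-square hypothesis. Specifically, Proposition~\ref{proposition:fixed-point} (applied with $n=0$) gives $\cA_{(-\infty,0]} \subset \cM_0 \subset \cM^{\alpha_1}$. From this inclusion and the tower property of conditional expectations one has
\[
P_{(-\infty,0]} \, E_{\cM^{\alpha_1}} = P_{(-\infty,0]}.
\]
On the other hand, the hypothesis that the displayed diagram is a commuting square, combined with condition~($ii$) of Proposition~\ref{proposition:cs}, yields
\[
E_{\cM^{\alpha_1}} \, P_{[0,\infty)} = E_{\cA_0} = P_{[0,0]}.
\]

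Chaining these two identities gives
\[
P_{(-\infty,0]}P_{[0,\infty)} = P_{(-\infty,0]} \, E_{\cM^{\alpha_1}} \, P_{[0,\infty)} = P_{(-\infty,0]}P_{[0,0]} = P_{[0,0]},
\]
where the last equality uses $\cA_0 \subset \cA_{(-\infty,0]}$ so that $P_{[0,0]}(\cM) \subset \cA_{(-\infty,0]}$ is left fixed by $P_{(-\infty,0]}$. An application of Lemma~\ref{lemma:Mark-Suff} then upgrades this single relation at $n=0$ to Markovianity of the whole local filtration $\cA_\bullet$ via the shift covariance $\alpha_0 P_I = P_{I+1}\alpha_0$, and by definition $(\cM,\psi,\alpha_0,\cA_0)$ is a bilateral stationary Markov process.

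I do not expect any real obstacle here: the argument is essentially the bilateral analogue of the corresponding step in \cite{KKW20}, and the only subtlety is the correct use of Proposition~\ref{proposition:fixed-point} to place $\cA_{(-\infty,0]}$ inside $\cM^{\alpha_1}$, which is exactly what lets the ``big'' commuting square hypothesis collapse to the ``small'' Markov equation. The non-trivial content has already been absorbed into Lemma~\ref{lemma:Mark-Suff} (shift covariance of conditional expectations) and Proposition~\ref{proposition:fixed-point} (the fixed-point containment coming from the Thompson relations).
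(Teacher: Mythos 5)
Your proposal is correct and follows essentially the same route as the paper's proof: insert $E_{\cM^{\alpha_1}}$ between $P_{(-\infty,0]}$ and $P_{[0,\infty)}$ (justified by Proposition~\ref{proposition:fixed-point} giving $\cA_{(-\infty,0]}\subset\cM_0\subset\cM^{\alpha_1}$), collapse $E_{\cM^{\alpha_1}}P_{[0,\infty)}$ to $P_{[0,0]}$ via the commuting-square hypothesis, and conclude with Lemma~\ref{lemma:Mark-Suff}. The only cosmetic difference is that the paper writes the middle step as $P_{(-\infty,0]}P_{[0,0]}P_{[0,\infty)}$ before collapsing, which is equivalent to your chain.
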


\begin{proof}
{\sloppy
Let $Q_n$ and $P_I$ denote the $\psi$-preserving normal conditional expectations from $\cM$ onto~$\calM^{\alpha_n}$ and $\cA_I$ respectively. Note that the commuting square condition
implies $Q_1 P_{[0,\infty)} = P_{[0,0]}$. From Proposition~\ref{proposition:fixed-point},
$\cA_{(-\infty,0]} \subset \cM_0 \subset \cM^{\alpha_{1}}$. Hence we get
\begin{alignat*}{3}
P_{(-\infty,0]} P_{[0,\infty)}
&= P_{(-\infty,0]} Q_{1} P_{[0,\infty)}
 &\qquad & \text{(since $\cA_{(-\infty,0]} \subset \cM^{\alpha_{1}}$)}\\
&= P_{(-\infty,0]} P_{[0,0]} P_{[0,\infty)}
 & \qquad & \text{(by commuting square condition)}\\
&= P_{[0,0]}
 & \qquad & \text{(as $\cA_{[0,0]} \subset \cA_{(-\infty,0]}$ and $\cA_{[0,0]} \subset \cA_{[0,\infty)}$)}.
\end{alignat*}}
Thus, by Lemma \ref{lemma:Mark-Suff}, $\{\cA_{I}\}_{I \in \cI(\Zset)}$ is a local Markov filtration and $(\cM, \psi, \alpha_0, \cA_0)$ is a bilateral stationary Markov process.
\end{proof}

\begin{Corollary}\label{corollary-markov-filtration-0}
Suppose $\rho\colon F \to \Aut(\cM,\psi)$ is a representation with $\alpha_0 = \rho(g_0)$.
Then the quadruple $(\cM,\psi, \alpha_0, \cM_{0})$ is a bilateral stationary Markov process.
\end{Corollary}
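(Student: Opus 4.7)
The plan is to recognize that this corollary is precisely the case $\cA_0 = \cM_0$ of Theorem \ref{theorem:markov-filtration-1}, so the entire task reduces to verifying that the hypothesized commuting square
\[
\begin{matrix}
\cM^{\alpha_1} &\subset &\cM \\
\cup & & \cup \\
\cM_0 & \subset & \cA_{[0,\infty)}
\end{matrix}
\]
holds when one takes $\cA_0 = \cM_0$. Once this is checked, Theorem \ref{theorem:markov-filtration-1} supplies the local Markov filtration and the Markov process structure immediately.

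First I would identify the upper-right corner of the square. Using the adaptedness relation $\alpha_0(\cM_n) = \cM_{n+1}$ for all $n \in \Nset_0$, established immediately after the definition of $\cM_n$ at the start of Section \ref{section:Mark-From-Rep}, one iterates to get $\alpha_0^n(\cM_0) = \cM_n$ for every $n \in \Nset_0$. Therefore
\[
\cA_{[0,\infty)} = \bigvee_{n \ge 0} \alpha_0^n(\cM_0) = \bigvee_{n \ge 0} \cM_n = \cM_\infty.
\]
The inclusion $\cM_0 \subset \cM^{\alpha_1}$ is immediate from the definition $\cM_0 = \bigcap_{k \ge 1} \cM^{\alpha_k}$.

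Having rewritten the square as
\[
\begin{matrix}
\cM^{\alpha_1} &\subset &\cM \\
\cup & & \cup \\
\cM_0 & \subset & \cM_\infty,
\end{matrix}
\]
the commuting square property is exactly the content of Corollary \ref{corollary:commsquare} applied at index $n=0$. Since Corollary \ref{corollary:commsquare} is proved from Proposition \ref{proposition:generatingrestriction} and Lemma \ref{lemma:generating} without assuming that $\rho$ is generating, no additional hypothesis is required here. All the ingredients of Theorem \ref{theorem:markov-filtration-1} are thereby in place.

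There is no substantive obstacle in this corollary: the only mildly delicate point is keeping the generating-versus-non-generating distinction straight, since Corollary \ref{corollary:commsquare} is formulated with the intersected fixed-point tower $\cM_n$ (rather than with single fixed-point algebras $\cM^{\alpha_{n+1}}$, which would coincide only under the generating assumption by Proposition \ref{proposition:generating-property}). Apart from this bookkeeping, the proof is a one-line deduction: the desired commuting square is Corollary \ref{corollary:commsquare}, Theorem \ref{theorem:markov-filtration-1} then yields that $\{\cA_I\}_{I \in \cI(\Zset)}$ with $\cA_I = \bigvee_{i \in I} \alpha_0^i(\cM_0)$ is a local Markov filtration, and consequently $(\cM,\psi,\alpha_0,\cM_0)$ is a bilateral stationary Markov process.
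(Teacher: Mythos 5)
Your proof is correct, and it follows the same overall skeleton as the paper's (reduce to Theorem \ref{theorem:markov-filtration-1} by exhibiting the required commuting square, which ultimately comes from Corollary \ref{corollary:commsquare}). The one step you handle differently is how the commuting square over $\cA_{[0,\infty)}$ is obtained. The paper keeps $\cA_{[0,\infty)}=\cM_{[0,\infty)}$ and $\cM_\infty$ as possibly distinct algebras with $\cM_{[0,\infty)}\subset\cM_{(-\infty,\infty)}\subset\cM_\infty$, and transfers the commuting square of Corollary \ref{corollary:commsquare} down to the smaller algebra by a conditional-expectation computation, $P_{(-\infty,\infty)}Q_1=P_{(-\infty,\infty)}E_{\cM_\infty}Q_1=P_{(-\infty,\infty)}P_0=P_0$, and then invokes Theorem \ref{theorem:markov-filtration-1}. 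You instead observe that they coincide: since $\alpha_0(\cM_n)=\cM_{n+1}$ for all $n\in\Nset_0$ (established at the start of Section \ref{subsection:generating}), one gets $\alpha_0^n(\cM_0)=\cM_n$ and hence $\cA_{[0,\infty)}=\bigvee_{n\ge 0}\cM_n=\cM_\infty$, so the hypothesis of Theorem \ref{theorem:markov-filtration-1} is literally Corollary \ref{corollary:commsquare} at $n=0$. This is a genuine shortcut: it eliminates the operator computation entirely, at the cost of leaning on the equality (rather than just the inclusion) $\alpha_0(\cM_n)=\cM_{n+1}$, which in turn uses that $\alpha_0$ is an automorphism. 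The paper's slightly longer route has the advantage of only using inclusions and the general mechanism of coarsening commuting squares, which is the pattern reused elsewhere (e.g., in Theorem \ref{theorem:markov-filtration-2}). Your remark that Corollary \ref{corollary:commsquare} does not require the generating property is accurate and is exactly the point that makes the argument go through without extra hypotheses.
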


\begin{proof}
We know from Corollary \ref{corollary:commsquare} that the following is a commuting square:
\[
\begin{matrix}
\cM^{\alpha_1} &\subset &\cM \\
\cup & & \cup \\
\cM_0 & \subset & \cM_{\infty}.
\end{matrix}
\]
Let $\{\cM_I\}_{I\in \cI(\Zset)}$ denote the local filtration given by $\cM_I=\bigvee_{i\in I}
\alpha_0^{i}(\cM_0)$ and $P_I$ be the corresponding conditional expectations. As
$\cM_{(-\infty,n]}\subset \cM_{n}$ for all $n\in \Nset_0$, it is easily verified that
$\cM_{(-\infty,\infty)}\subset \cM_\infty$. Let $P_0:=P_{[0,0]}$ be the $\psi$-preserving conditional
expectation from $\cM$ onto~$\cM_0$. Then from the commuting square above, we have
$E_{\cM_{\infty}}Q_1=P_{0}$, where $E_{\cM_\infty}$ is of course the conditional expectation
onto $\cM_\infty$. This in turn gives $P_{(-\infty,\infty)}Q_1=P_{(-\infty,\infty)} E_{\cM_{\infty}}Q_1 =
P_{(-\infty,\infty)} P_0=P_0$. Hence we get that $\cM_0$ is a von Neumann subalgebra of~$\cM$ such that
\[
	\begin{matrix}
	\cM^{\alpha_1} &\subset &\cM \\
	\cup & & \cup \\
	\cM_0 & \subset & \cM_{[0,\infty)}
	\end{matrix}
\]
forms a commuting square. By Theorem \ref{theorem:markov-filtration-1},
$(\cM,\psi, \alpha_0,\cM_{0})$ is a stationary Markov process.
\end{proof}

\begin{Corollary}\label{corollary:markov-filtration-0-MN}
Suppose $\rho\colon F \to \Aut(\cM,\psi)$ is a representation with $\alpha_m = \rho(g_m)$,
for $m\in \Nset_0$. Then the quadruple $(\cM,\psi, \alpha_m, \cM_{n})$ is a bilateral stationary Markov process for any $0 \le m \le n < \infty$.
\end{Corollary}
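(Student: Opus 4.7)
The natural approach is to reduce this to the already-proved Corollary \ref{corollary-markov-filtration-0} (the $m=n=0$ case) via the partial shift endomorphism $\sh_{m,n}$ of Definition \ref{definition:mn-shift}. The hypothesis $0 \le m \le n$ of the corollary matches exactly the range of validity of $\sh_{m,n}$, which is strong evidence that this is the intended route.

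First I would form the composed representation $\tilde{\rho} := \rho \circ \sh_{m,n} \colon F \to \Aut(\cM,\psi)$. Writing $\tilde{\alpha}_k := \tilde{\rho}(g_k)$, the definition of the partial shift gives $\tilde{\alpha}_0 = \rho(g_m) = \alpha_m$ and $\tilde{\alpha}_k = \rho(g_{n+k}) = \alpha_{n+k}$ for $k \ge 1$. In particular, the intersected fixed point algebra at level $0$ for this new representation is
\[
\tilde{\cM}_0 := \bigcap_{k \ge 1} \cM^{\tilde{\alpha}_k} = \bigcap_{k \ge 1} \cM^{\alpha_{n+k}} = \bigcap_{j \ge n+1} \cM^{\alpha_j} = \cM_n.
\]
So under $\tilde\rho$, the data $(\tilde\alpha_0, \tilde{\cM}_0)$ coincides with $(\alpha_m, \cM_n)$.

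Next, I would apply Corollary \ref{corollary-markov-filtration-0} directly to the representation $\tilde\rho$ to conclude that $(\cM, \psi, \tilde\alpha_0, \tilde{\cM}_0) = (\cM, \psi, \alpha_m, \cM_n)$ is a bilateral stationary Markov process, which is exactly the claim.

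The only real checkpoint — and the one I would verify carefully — is that the shift indeed aligns indices so that the "$\cM_0$" attached to $\tilde\rho$ is precisely $\cM_n$; this is a one-line reindexing but is the sole content of the argument. There is no substantial obstacle, since $\sh_{m,n}$ is a well-defined group homomorphism (so $\tilde\rho$ is automatically a representation) and no generating or minimality hypothesis is needed, both Corollary \ref{corollary-markov-filtration-0} and the current statement being formulated for arbitrary representations of $F$.
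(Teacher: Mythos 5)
Your proposal is correct and follows exactly the paper's own argument: form $\rho \circ \sh_{m,n}$, observe that the level-$0$ intersected fixed point algebra of this shifted representation is $\cM_n$ while its represented $g_0$ is $\alpha_m$, and invoke Corollary~\ref{corollary-markov-filtration-0}. Nothing is missing.
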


\begin{proof}
Consider the representation $\rho_{m,n} := \rho \circ \sh_{m,n} \colon F \to \Aut(\cM,\psi)$,
where $\sh_{m,n}$ denotes the $(m,n)$-partial shift as introduced in Definition
\ref{definition:mn-shift}. We observe that $\rho_{m,n}(g_0) = \rho(g_m)$ and
$\rho_{m,n}(g_k) = \rho(g_{n+k})$ for all $k \ge 1$. In particular, we get
\[
\bigcap_{k \ge 1}\cM^{\rho_{m,n}(g_k)}
=\bigcap_{k \ge 1} \cM^{\rho(g_{k+n})}
=\bigcap_{k \ge n+1} \cM^{\rho(g_k)}=\cM_n.
\]
Thus Corollary \ref{corollary-markov-filtration-0} applies for the $(m,n)$-shifted representation
$\rho_{m,n}$, and its application completes the proof.
\end{proof}

\begin{Corollary}\label{corollary:markov-filtration-MN}
Suppose $\rho \colon F \to \Aut(\cM,\psi)$ is a generating representation. Then the quadruple
$\big(\cM,\psi, \alpha_m, \cM^{\alpha_{n+1}}\big)$ is a bilateral stationary Markov process for any
$0 \le m \le n < \infty$.
\end{Corollary}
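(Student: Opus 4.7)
The plan is essentially to combine the previous corollary with the structural simplification that the generating property provides. More precisely, Corollary \ref{corollary:markov-filtration-0-MN} already establishes that $(\cM,\psi,\alpha_m,\cM_n)$ is a bilateral stationary Markov process for any $0 \le m \le n < \infty$, where $\cM_n = \bigcap_{k \ge n+1}\cM^{\alpha_k}$. So the only thing one needs in order to obtain the stated corollary is to identify the generator $\cM_n$ with the single fixed point algebra $\cM^{\alpha_{n+1}}$.

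First, I would invoke Proposition~\ref{proposition:generating-property}: under the hypothesis that $\rho$ is generating, the intersected fixed point algebra $\cM_n = \bigcap_{k \ge n+1}\cM^{\alpha_k}$ collapses to the single fixed point algebra $\cM^{\rho(g_{n+1})} = \cM^{\alpha_{n+1}}$. Thus $\cM^{\alpha_{n+1}} = \cM_n$ for every $n \in \Nset_0$.

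Second, given this identification, I would simply substitute $\cM^{\alpha_{n+1}}$ for $\cM_n$ in the statement of Corollary~\ref{corollary:markov-filtration-0-MN} and conclude that $\big(\cM,\psi,\alpha_m,\cM^{\alpha_{n+1}}\big)$ is a bilateral stationary Markov process for every $0 \le m \le n < \infty$.

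There is no real obstacle here: the work has already been done in Corollary~\ref{corollary-markov-filtration-0} (via the commuting square of Corollary~\ref{corollary:commsquare} together with Theorem~\ref{theorem:markov-filtration-1}) and in its $(m,n)$-partial shifted version Corollary~\ref{corollary:markov-filtration-0-MN}. The generating hypothesis is used only to replace the a priori intersection of fixed point algebras by a single fixed point algebra via Proposition~\ref{proposition:generating-property}, which makes the statement cleaner but does not require any additional argument.
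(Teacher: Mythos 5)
Your proposal is correct and is essentially identical to the paper's own proof: both use the generating hypothesis (via Proposition~\ref{proposition:generating-property}) to identify $\cM_n$ with $\cM^{\alpha_{n+1}}$ and then quote Corollary~\ref{corollary:markov-filtration-0-MN}. No further comment is needed.
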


\begin{proof}
If the representation $\rho$ is generating, then $\cM^{\alpha_{n+1}}=\cM_n$. Hence the result follows by Corollary \ref{corollary:markov-filtration-0-MN}.
\end{proof}

\begin{Theorem}\label{theorem:markov-filtration-2}
Let the probability space $(\cM,\psi)$ be equipped with the representation
$\rho \colon F \to \Aut(\cM,\psi)$ and the local filtration
$\cA_\bullet \equiv \{\cA_I\}_{I \in \cI(\Zset)}$, where
$ \cA_I := \bigvee_{i \in I}\rho(g_0^i)(\cA_0)$ for some
$\psi$-conditioned von Neumann subalgebra $\cA_0$ of
$\cM_0 = \bigcap_{k \ge 1} \cM^{\rho(g_k)}
$. Further suppose the inclusions
\[
\begin{matrix}
\cM^{\rho(g_{k+1})} &\subset &\cM \\
\cup & & \cup \\
\cA_{[0,k]} & \subset & \cA_{[0,\infty)}
\end{matrix}
\]
form a commuting square for every $k \ge 0$. Then each cell in the following infinite triangular array of inclusions is a commuting square:
$$
\setcounter{MaxMatrixCols}{20}
\begin{matrix}
\cdots &\hspace{-1.5mm}\subset\hspace{-1.5mm}& \cA_{(-\infty,-2]} & \hspace{-1.5mm}\subset\hspace{-1.5mm} & \cA_{(-\infty,-1]} & \hspace{-1.5mm}\subset\hspace{-1.5mm} & \cA_{(-\infty,0]} & \hspace{-1.5mm}\subset\hspace{-1.5mm}& \cA_{(-\infty,1]} & \hspace{-1.5mm}\subset\hspace{-1.5mm} & \cA_{(-\infty,2]} & \hspace{-1.5mm}\subset\hspace{-1.5mm} & \cdots & \hspace{-1.5mm}\subset\hspace{-1.5mm} & \cA_{(-\infty,\infty)}
\\
&& \cup & & \cup & & \cup & & \cup & & \cup & & \cdots & & \cup
\\
&& \vdots & & \vdots & & \vdots & & \vdots & & \vdots & & \cdots & & \vdots
\\
&& \cup & & \cup & & \cup & & \cup & & \cup & & \cdots & & \cup
\\
&& \cA_{[-2,-2]} & \hspace{-1.5mm}\subset\hspace{-1.5mm} & \cA_{[-2,-1]} & \hspace{-1.5mm}\subset\hspace{-1.5mm} & \cA_{[-2,0]} & \hspace{-1.5mm}\subset\hspace{-1.5mm}& \cA_{[-2,1]} & \hspace{-1.5mm}\subset\hspace{-1.5mm} & \cA_{[-2,2]} & \hspace{-1.5mm}\subset\hspace{-1.5mm} & \cdots & \hspace{-1.5mm}\subset\hspace{-1.5mm} & \cA_{[-2,\infty)}
\\
&&&& \cup && \cup & & \cup & & \cup & & & & \cup
\\
&&&& \cA_{[-1,-1]} & \hspace{-1.5mm}\subset\hspace{-1.5mm} & \cA_{[-1,0]} & \hspace{-1.5mm}\subset\hspace{-1.5mm}& \cA_{[-1,1]} & \hspace{-1.5mm}\subset\hspace{-1.5mm} & \cA_{[-1,2]} & \hspace{-1.5mm}\subset\hspace{-1.5mm} & \cdots & \hspace{-1.5mm}\subset\hspace{-1.5mm} & \cA_{[-1,\infty)}
 \\
&&&&&& \cup & & \cup & & \cup & & & & \cup
\\
&&&&&& \cA_{[0,0]} &\hspace{-1.5mm}\subset\hspace{-1.5mm}& \cA_{[0,1]} & \hspace{-1.5mm}\subset\hspace{-1.5mm} & \cA_{[0,2]} & \hspace{-1.5mm}\subset\hspace{-1.5mm} & \cdots & \hspace{-1.5mm}\subset\hspace{-1.5mm} & \cA_{[0,\infty)}
\\
&& &&&& && \cup & & \cup & & & & \cup
 \\
&& &&&& && \cA_{[1,1]}&\hspace{-1.5mm}\subset\hspace{-1.5mm}&\cA_{[1,2]}&\hspace{-1.5mm}\subset\hspace{-1.5mm}& \cdots & \hspace{-1.5mm}\subset\hspace{-1.5mm} & \cA_{[1,\infty)}
\\
 && && &&&& & & \cup & & & & \cup
 \\
 && &&&& &&&& \cA_{[2,2]} & \hspace{-1.5mm}\subset\hspace{-1.5mm} & \cdots & \hspace{-1.5mm}\subset\hspace{-1.5mm} & \cA_{[2,\infty)}
 \\
&&&&&&&& &&&& && \cup
 \\
&&&&&&&& &&&& && \vdots
\end{matrix}
\setcounter{MaxMatrixCols}{10}
$$
In particular, $\cA_\bullet$ is a local Markov filtration.
\end{Theorem}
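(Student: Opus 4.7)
The plan is to verify the commuting square condition in the form of Proposition~\ref{proposition:cs}$(ii)$, namely $E_{\cA_{I_1}} E_{\cA_{I_2}} = E_{\cA_{I_0}}$ with $I_0 \subset I_1 \cap I_2$, for each cell of the array. First I will reduce the bulk of the array to a single prototype ``finite'' cell by applying the shift automorphism $\alpha_0^k$, which satisfies $\alpha_0^k(\cA_I) = \cA_{I+k}$ and therefore preserves the commuting square property. Then I will handle the boundary cells, involving $\cA_{(-\infty,n]}$ at the top or $\cA_{[m,\infty)}$ at the right, by pointwise strong-operator limits.

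For the prototype cell
\[
\begin{matrix}
\cA_{[-1, n]} & \subset & \cA_{[-1, n+1]} \\
\cup & & \cup \\
\cA_{[0, n]} & \subset & \cA_{[0, n+1]}
\end{matrix}
\qquad (n \geq 0),
\]
I will show the equivalent identity $E_{\cA_{[-1,n]}} E_{\cA_{[0,n+1]}} = E_{\cA_{[0,n]}}$ on $\cM$. The key observation is that $\cA_{[-1, n]}$ sits inside the two consecutive fixed-point algebras $\cM^{\alpha_{n+1}}$ and $\cM^{\alpha_{n+2}}$. Indeed, combining $\cA_0 \subset \cM_0$ with the identities $\alpha_0(\cM_j) = \cM_{j+1}$ and $\alpha_0^{-1}(\cM_0) \subset \cM_0$ established in the discussion preceding Definition~\ref{definition:generating} yields $\alpha_0^i(\cA_0) \subset \cM_{\max\{i,0\}}$ for every $i \in \Zset$; since $\cM_j \subset \cM^{\alpha_k}$ whenever $j < k$, this gives the claimed containment. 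By Proposition~\ref{proposition:cs}$(ii)$, the hypothesis specialised to $k = n$ and $k = n+1$ reads $E_{\cM^{\alpha_{n+1}}} E_{\cA_{[0,\infty)}} = E_{\cA_{[0,n]}}$ and $E_{\cM^{\alpha_{n+2}}} E_{\cA_{[0,\infty)}} = E_{\cA_{[0,n+1]}}$. Using the tower property $E_{\cA_{[-1,n]}} E_{\cM^{\alpha_{k}}} = E_{\cA_{[-1,n]}}$ for $k \in \{n+1, n+2\}$ together with $\cA_{[0,n]} \subset \cA_{[-1,n]}$, the computation collapses as
\begin{align*}
E_{\cA_{[-1,n]}} E_{\cA_{[0,n+1]}}
 &= E_{\cA_{[-1,n]}} E_{\cM^{\alpha_{n+2}}} E_{\cA_{[0,\infty)}}
 = E_{\cA_{[-1,n]}} E_{\cA_{[0,\infty)}} \\
 &= E_{\cA_{[-1,n]}} E_{\cM^{\alpha_{n+1}}} E_{\cA_{[0,\infty)}}
 = E_{\cA_{[-1,n]}} E_{\cA_{[0,n]}}
 = E_{\cA_{[0,n]}}.
\end{align*}

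Applying $\alpha_0^{m+1}$ for $m \in \Zset$ will then promote the prototype to every cell with four bounded intervals. The boundary cells will be obtained as monotone limits: $\cA_{[m',n]} \uparrow \cA_{(-\infty,n]}$ as $m' \to -\infty$ and $\cA_{[m,n']} \uparrow \cA_{[m,\infty)}$ as $n' \to +\infty$, while the corresponding $\psi$-preserving normal conditional expectations converge pointwise in $L^2(\psi)$ by the standard noncommutative martingale convergence theorem; the identity in Proposition~\ref{proposition:cs}$(ii)$ passes to these limits. The main obstacle is pinpointing the correct pair of fixed-point algebras through which the prototype computation must factor: once one recognises that $\cA_{[-1,n]}$ lies simultaneously in $\cM^{\alpha_{n+1}}$ and $\cM^{\alpha_{n+2}}$, two invocations of the hypothesis compose to produce the cell commuting square, after which everything extends by symmetry under $\alpha_0$-shifts and by continuity. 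The Markovianity of $\cA_\bullet$ will follow as the specialisation of the cell commuting squares to the ``waist'' of the array, where $\cA_{(-\infty,n]}$ and $\cA_{[n,\infty)}$ meet at $\cA_{[n,n]}$.
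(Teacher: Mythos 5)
Your proposal is correct, and while its overall architecture matches the paper's (conjugate by powers of $\alpha_0$ to reduce all bounded cells to a single prototype strip, then pass to the boundary rows and columns by martingale limits), the central computation is genuinely different. The paper rewrites the prototype identity as $P_{[0,n]}\,\alpha_0\, P_{[0,n]} = \alpha_0 P_{[0,n-1]}$ and pushes $\alpha_0$ through the fixed-point expectations via the intertwining relation $\alpha_0 Q_n = Q_{n+1}\alpha_0$ (established as a separate proposition from the Thompson relations), after which a \emph{single} instance of the hypothesis, $Q_n P_{[0,\infty)} = P_{[0,n-1]}$, finishes the job. You instead never invoke that intertwining proposition: you encode the group relations at the level of algebras, through the localization $\cA_{[-1,n]} \subset \cM_n \subset \cM^{\alpha_{n+1}} \cap \cM^{\alpha_{n+2}}$, and then telescope
\begin{gather*}
E_{\cA_{[-1,n]}} E_{\cA_{[0,n+1]}} = E_{\cA_{[-1,n]}} E_{\cM^{\alpha_{n+2}}} E_{\cA_{[0,\infty)}} = E_{\cA_{[-1,n]}} E_{\cM^{\alpha_{n+1}}} E_{\cA_{[0,\infty)}} = E_{\cA_{[-1,n]}} E_{\cA_{[0,n]}} = E_{\cA_{[0,n]}}
\end{gather*}
using \emph{two} consecutive instances ($k=n$ and $k=n+1$) of the commuting-square hypothesis. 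Both arguments ultimately rest on the same two ingredients (the adaptedness $\cA_{[m,n]}\subset\cM^{\alpha_{n+1}}$ from Proposition~\ref{proposition:fixed-point} and the hypothesis read via Proposition~\ref{proposition:cs}), but your version is more self-contained in that it bypasses the operator-level intertwining lemma, at the cost of consuming the hypothesis at two levels per cell rather than one; you are also more explicit than the paper about the limit argument needed for the cells involving $\cA_{(-\infty,n]}$ and $\cA_{[m,\infty)}$, which the paper leaves implicit.
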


\begin{proof}
All claimed inclusions in the triangular array are clear from the definition of $\cA_{[m,n]}$.
We recall from Proposition~\ref{proposition:fixed-point} that
$\alpha_0^{k}(\cA_0)\subset \cM^{\alpha_{n+1}}$ for $k \leq n$.
Hence $\cA_{[m,n]} \subset \cM^{\alpha_{n+1}}$ for all $m \le n$. Next we show that, for
 $-\infty< m < n < \infty$, the cell of inclusions
\[
\begin{matrix}
\cA_{[m,n]} &\subset &\cA_{[m,n+1]} \\
\cup & & \cup \\
\cA_{[m+1,n]} & \subset & \cA_{[m+1,n+1]}
\end{matrix}
\]
forms a commuting square. So, as $P_I$ denotes the normal $\psi$-preserving conditional
expectation from $\cM$ onto $\cA_I$, we need to show
$P_{[m,n]} P_{[m+1,n+1]} = P_{[m+1,n]}$.
As $\alpha_0^m P_I \alpha_0^{-m} = P_{I+m}$ for all ${m \in \Zset}$, it suffices to show that, for all ${n \in \Nset}$,
$P_{[0,n]} P_{[1,n+1]} = P_{[1,n]}$
or, equivalently,
$P_{[0,n]} \alpha_0 P_{[0,n]} = \alpha_0 P_{[0,n-1]}$.
We calculate
\begin{align*}
P_{[0,n]} \alpha_0 P_{[0,n]}
 &= P_{[0,n]} Q_{n+1}\alpha_0 P_{[0,n]}
= P_{[0,n]} \alpha_0 Q_{n} P_{[0,n]}\\
& = P_{[0,n]} \alpha_0 Q_{n} P_{[0,\infty)}P_{[0,n]}
= P_{[0,n]} \alpha_0 P_{[0,n-1]} P_{[0,n]} \\
& = P_{[0,n]} \alpha_0 P_{[0,n-1]}=\alpha_0 P_{[0,n-1]}.
\end{align*}

Here we have used $ P_{[0,n]} = P_{[0,n]} Q_{n+1}$, the intertwining properties of
$\alpha_0$ and the commuting square assumption $ Q_{n} P_{[0,\infty)}= P_{[0,n-1]}$.
Thus each cell of inclusions in this triangular array forms a commuting square.
\end{proof}

More generally, we may consider a probability space which is equipped both with a local filtration and
a representation of the Thompson group $F$, and formulate compatiblity conditions between the local
filtration and the representation such that one obtains rich commuting square structures.

\begin{Corollary} \label{corollary:triangulararray}
Suppose the probability space $(\cM,\psi)$ is equipped with a local filtration
$\cN_\bullet \equiv \{\cN_{I}^{}\}_{I \in \cI(\Zset)}^{}$
and a representation $\rho \colon F \to \Aut(\cM,\psi)$ such that
\begin{enumerate}\itemsep=0pt
\item[$(i)$] 
$\rho(g_0) (\cN_I)= \cN_{I+1}$ for all $I \in \cI(\Zset)$ $($compatibility$)$,
\item[$(ii)$] 
$\cN_{[0,n]} \subset \cM^{\rho(g_{n+1})}$ for all $n \in \Nset_0$ $($adaptedness$)$,
\item[$(iii)$] 
the inclusions
\[
\begin{matrix}
\cM^{\rho(g_{k+1})} &\subset &\cM \\
\cup & & \cup \\
\cN_{[0,k]} & \subset & \cN_{[0,\infty)}
\end{matrix}
\]
form a commuting square for all $k \in \Nset_0$.
\end{enumerate}
Then each cell in the following infinite triangular array of inclusions is a commuting square:
$$
\setcounter{MaxMatrixCols}{20}
\begin{matrix}
\cdots& \hspace{-2mm}\subset\hspace{-2mm}& \cN_{(-\infty,-2]} & \hspace{-2mm}\subset\hspace{-2mm} & \cN_{(-\infty,-1]} & \hspace{-2mm}\subset\hspace{-2mm} & \cN_{(-\infty,0]} & \subset& \cN_{(-\infty,1]} & \hspace{-2mm}\subset\hspace{-2mm} & \cN_{(-\infty,2]} & \hspace{-2mm}\subset\hspace{-2mm} & \cdots & \hspace{-2mm}\subset\hspace{-2mm} & \cN_{(-\infty,\infty)}
\\
&& \cup & & \cup & & \cup & & \cup & & \cup & & \cdots & & \cup
\\
&& \vdots & & \vdots & & \vdots & & \vdots & & \vdots & & \cdots & & \vdots
\\
&& \cup & & \cup & & \cup & & \cup & & \cup & & \cdots & & \cup
\\
&& \cN_{[-2,-2]} & \hspace{-2mm}\subset\hspace{-2mm} & \cN_{[-2,-1]} & \hspace{-2mm}\subset\hspace{-2mm} & \cN_{[-2,0]} & \subset& \cN_{[-2,1]} & \hspace{-2mm}\subset\hspace{-2mm} & \cN_{[-2,2]} & \hspace{-2mm}\subset\hspace{-2mm} & \cdots & \hspace{-2mm}\subset\hspace{-2mm} & \cN_{[-2,\infty)}
\\
&&&& \cup && \cup & & \cup & & \cup & & & & \cup
\\
&&&& \cN_{[-1,-1]} & \hspace{-2mm}\subset\hspace{-2mm} & \cN_{[-1,0]} & \subset& \cN_{[-1,1]} & \hspace{-2mm}\subset\hspace{-2mm} & \cN_{[-1,2]} & \hspace{-2mm}\subset\hspace{-2mm} & \cdots & \hspace{-2mm}\subset\hspace{-2mm} & \cN_{[-1,\infty)}
 \\
&&&&&& \cup & & \cup & & \cup & & & & \cup
\\
&&&&&& \cN_{[0,0]} &\subset& \cN_{[0,1]} & \hspace{-2mm}\subset\hspace{-2mm} & \cN_{[0,2]} & \hspace{-2mm}\subset\hspace{-2mm} & \cdots & \hspace{-2mm}\subset\hspace{-2mm} & \cN_{[0,\infty)}
\\
&& &&&& && \cup & & \cup & & & & \cup
 \\
&& &&&& && \cN_{[1,1]}&\subset&\cN_{[1,2]}&\subset& \cdots & \hspace{-2mm}\subset\hspace{-2mm} & \cN_{[1,\infty)}
 \\
& & && &&&& & & \cup & & & & \cup
 \\
& & &&&& &&&& \cN_{[2,2]}
 & \hspace{-2mm}\subset\hspace{-2mm} & \cdots & \hspace{-2mm}\subset\hspace{-2mm} & \cN_{[2,\infty)}
 \\
&&&&&&&& &&&& && \cup
 \\
&&&&&&&& &&&& && \vdots
\end{matrix}
\setcounter{MaxMatrixCols}{10}
$$
In particular, $\cN_\bullet$ is a local Markov filtration.
\end{Corollary}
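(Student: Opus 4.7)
The plan is to mirror the proof of Theorem \ref{theorem:markov-filtration-2} essentially verbatim, with the family $\{\cA_I\}$ there replaced by the given $\{\cN_I\}$. The three standing hypotheses on $\cN_\bullet$ have been tailored to supply exactly the ingredients used in that argument: compatibility (i) is the analogue of the intertwining $\alpha_0(\cA_I) = \cA_{I+1}$, adaptedness (ii) plays the role of Proposition \ref{proposition:fixed-point}, and the commuting square condition (iii) is identical in the two statements.

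First set $\alpha_0 := \rho(g_0)$, denote by $Q_n$ the $\psi$-preserving conditional expectation onto $\cM^{\alpha_n}$, and by $P_I$ the one onto $\cN_I$. From (i) together with the automorphism property of $\alpha_0$, the $\psi$-preserving conditional expectation onto $\alpha_0^k(\cN_I) = \cN_{I+k}$ is $\alpha_0^k P_I \alpha_0^{-k}$, so by uniqueness
\[
\alpha_0^k P_I \alpha_0^{-k} = P_{I+k} \qquad \text{for all } I \in \cI(\Zset),\ k \in \Zset.
\]

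Next fix a finite inner cell of the triangular array, i.e.\ the square with corners $\cN_{[m,n]}$, $\cN_{[m, n+1]}$, $\cN_{[m+1, n]}$, $\cN_{[m+1, n+1]}$ for integers $m < n$. By Proposition \ref{proposition:cs}(ii), it is a commuting square iff $P_{[m,n]} P_{[m+1, n+1]} = P_{[m+1, n]}$. Conjugating by $\alpha_0^{-m}$ and using the intertwining of the preceding paragraph, this reduces, with $n' := n - m \geq 1$, to
\[
P_{[0, n']}\, P_{[1, n'+1]} = P_{[1, n']},
\]
equivalently, using $P_{[1, N]} = \alpha_0 P_{[0, N-1]} \alpha_0^{-1}$, to
\[
P_{[0, n']}\, \alpha_0\, P_{[0, n']} = \alpha_0\, P_{[0, n'-1]}.
\]
This last identity is then established by the same chain of equalities as in the proof of Theorem \ref{theorem:markov-filtration-2}: insert $P_{[0, n']} = P_{[0, n']} Q_{n'+1}$ by adaptedness (ii), apply the intertwining $Q_{n'+1} \alpha_0 = \alpha_0 Q_{n'}$ from the last proposition of Section \ref{subsection:generating} (valid for $n' \geq 1$), insert $P_{[0, \infty)}$ by isotony, and apply (iii) in the form $Q_{n'} P_{[0, \infty)} = P_{[0, n'-1]}$.

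The boundary cells of the array, involving $\cN_{(-\infty, n]}$ or $\cN_{[m, \infty)}$, follow from the finite ones by a routine strong operator limit, since $P_{[m, N]} \to P_{[m, \infty)}$ and $P_{[-N, n]} \to P_{(-\infty, n]}$ pointwise as $N \to \infty$. The asserted Markovianity of $\cN_\bullet$ is then the special case corresponding to the corner cell with $\cN_{[n, n]}$ at the lower left and $\cN_{(-\infty, \infty)}$ at the upper right. The only genuinely new step here is the conjugation reduction of a general cell to the normalized form $n' \geq 1$; once that is in place no new ideas are needed, as (i)--(iii) encode precisely those properties of the filtration $\cA_\bullet$ that were exploited in Theorem \ref{theorem:markov-filtration-2}.
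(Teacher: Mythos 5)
Your proposal is correct and follows essentially the same route as the paper: the paper's own proof simply observes that adaptedness gives $P_{[0,n]}=P_{[0,n]}Q_{n+1}$ and then defers to the argument of Theorem \ref{theorem:markov-filtration-2}, whose ingredients (the intertwining $\alpha_0^k P_I\alpha_0^{-k}=P_{I+k}$, the relation $Q_{n+1}\alpha_0=\alpha_0 Q_n$, and the commuting square hypothesis $Q_nP_{[0,\infty)}=P_{[0,n-1]}$) are exactly the ones you redeploy. Your treatment is in fact slightly more explicit than the paper's, since you spell out the conjugation reduction to $m=0$ and the limiting argument for the boundary cells, both of which the paper leaves implicit.
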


\begin{proof}Let $P_I$ be the normal $\psi$-preserving conditional expectation onto $\cN_I$. Let
$\alpha_n=\rho(g_n)$ and $Q_n$ be the normal $\psi$-preserving conditional expectation onto
$\cM^{\alpha_{n}}$ as before. We observe that $\cN=\cN_{[0,0]}\subset \cM^{\alpha_1}$ by the
adaptedness condition $(ii)$. This adaptedness property also gives us $\cN_{[0,n]}\subset
\cM^{\alpha_{n+1}}$, and thus $P_{[0,n]}=P_{[0,n]}Q_{n+1}$, for any $n \in \Nset_0$.
The rest of the proof follows the
arguments used in the proof of Theorem~\ref{theorem:markov-filtration-2}.
\end{proof}

\section[Constructions of representations of F from stationary Markov processes]{Constructions of representations of $\boldsymbol{F}$\\ from stationary Markov processes}\label{section:Reps-of-F-from-Mark}

This section is about how to construct representations of the Thompson group $F$ as they arise in noncommutative probability theory. It will be seen that a large class of bilateral stationary Markov processes in tensor dilation form (see Definition~\ref{definition:tensordilation}) will give rise to representations of~$F$. In particular, this will establish that a Markov map on a probability space $(\cA, \varphi)$ with~$\cA$ a~commutative von Neumann algebra can be written as a compressed represented generator~of~$F$.

\subsection{An illustrative example} \label{subsection:Example}
Let $(\cA,\varphi)$ and $(\cC,\chi)$ be noncommutative probability spaces.
We have already shown in \cite{KKW20} how
to obtain a representation of the Thompson monoid $F^+$ and a unilateral stationary Markov process on
$\big(\cA \otimes \cC^{\otimes_{\Nset_0}} , \varphi \otimes \chi^{\otimes_{\Nset_0}}\big)$.
In general, especially for $\cC$
finite-dimensional, this
tensor product model for a noncommutative probability space is ``too small'' to accommodate a representation of the Thompson group $F$. Also, even though the extension
$\big(\cA \otimes \cC^{\otimes_{\Zset}} , \varphi \otimes \chi^{\otimes_{\Zset}}\big)$ suffices to set up a bilateral extension
of a unilateral stationary Markov process (see for example \cite[Section~4.2.2]{Ku85}), it would still be ``too small'' for canonically extending a represention of the monoid $F^+$ to one of the group $F$.

This motivates the following model build on
two given noncommutative probability spaces $(\cA,\varphi)$ and $(\cC,\chi)$. Throughout this final section, consider the infinite von Neumann
algebraic tensor product with respect to an infinite tensor product state given by
\[
(\cM, \psi)
:= \big(\cA \otimes \cC^{\otimes_{\Nset_0^2}} , \varphi \otimes \chi^{\otimes_{\Nset_0^2}}\big).
\]
This probability space can be equipped with a representation of the Thompson group $F$. Also it can
be used to set up a bilateral noncommutative Bernoulli shift and, more generally, a bilateral stationary noncommutative Markov process. We start with providing
a representation of the Thompson group $F$.

For $k \in \Nset_0$, let $\beta_k$ be the automorphisms of $\cM$ defined on the weak*-total set of finite elementary tensors in $\cM$ as
\[
\beta_0\biggl(a \otimes \biggl(\bigotimes_{(i,j)\in \Nset_0^2} x_{i,j}\biggr)\biggr):= a \otimes \biggl(\bigotimes_{(i,j)\in \Nset_0^2} y_{i,j}\biggr)
\qquad
\text{with}
\quad
y_{i,j} = \begin{cases}
x_{2i+1,j} & \text{if}\ j=0, \\
x_{2i,j-1} & \text{if}\ j=1, \\
x_{i,j-1} & \text{if}\ j \geq 2
\end{cases}
\]
and
\[
\beta_k\biggl(a \otimes \biggl(\bigotimes_{(i,j)\in \Nset_0^2} x_{i,j}\biggr)\biggr):= a \otimes \biggl(\bigotimes_{(i,j)\in \Nset_0^2} y_{i,j}\biggr)
\qquad
\text{with}
\quad
y_{i,j} = \begin{cases}
x_{i,j} & \text{if}\ j\le k-1, \\
x_{2i+1,j} & \text{if}\ j=k, \\
x_{2i,j-1} & \text{if}\ j=k+1, \\
x_{i,j-1} & \text{if}\ j \geq k+1
\end{cases}
\]
for $k \in \Nset$. It is evident from these two definitions that
the actions of $\beta_0$ and $\beta_1$ are induced from
corresponding shifts on the index set $\Nset_0^2$, as visualized graphically in Figure~\ref{figure:graphs}.

\begin{figure}[ht]\centering
{\color{blue}$\beta_0 \quad \hat{=} $ }
\begin{tikzcd}[column sep=0.8em, row sep=0.5em]
& \vdots & \vdots& \vdots & \vdots \phantom{\,\,\cdots} \\[1pt]
& \arrow[ddddr, blue, start anchor={[xshift=-1.5ex,yshift=2.8ex]}, end anchor={[xshift=0.5ex,yshift=-0.8ex]}] \arrow[dddd, blue, start anchor={[xshift=-0.2ex,yshift=2.8ex]}, end anchor={[xshift=0.8ex,yshift=-0.8ex]}, bend right] \arrow[dddr, blue, start anchor={[xshift=1ex,yshift=3ex]}, end anchor={[xshift=0.5ex,yshift=-0.8ex]}] \arrow[ddd, blue, start anchor={[xshift=-0.8ex,yshift=3ex]}, end anchor={[xshift=0.8ex,yshift=-0.8ex]}, bend right] \arrow[ddr, blue, start anchor={[xshift=3ex,yshift=3ex]}, end anchor={[xshift=0.8ex,yshift=-0.8ex]}]
\arrow[dd, blue, start anchor={[xshift=-1.2ex,yshift=3.2ex]}, end anchor={[xshift=0.8ex,yshift=-0.8ex]}, bend right]
\arrow[d, blue, start anchor={[xshift=-1ex,yshift=3.2ex]}, end anchor={[xshift=0.8ex,yshift=-0.8ex]}, bend right] \arrow[dr, blue, start anchor={[xshift=4ex,yshift=2.6ex]}, end anchor={[xshift=1.5ex,yshift=-0.8ex]}] & & & \\
& \bullet \arrow[dddd, blue, start anchor={[xshift=0.8ex,yshift=1ex]}, end anchor={[xshift=0.8ex, yshift=-0.7ex]}, bend right]
& \bullet \arrow[r, blue, start anchor={[xshift=-1ex]}, end anchor={[xshift=1ex]}]
& \bullet \arrow[r, blue, start anchor={[xshift=-1ex]}, end anchor={[xshift=1ex]}]
& \bullet \,\, \cdots \\
& \bullet \arrow[dddr, blue, start anchor={[xshift=-0.6ex,yshift=0.9ex]}, end anchor={[xshift=0.6ex,yshift=-0.8ex]}]
& \bullet \arrow[r, blue, start anchor={[xshift=-1ex]}, end anchor={[xshift=1ex]}]
& \bullet \arrow[r, blue, start anchor={[xshift=-1ex]}, end anchor={[xshift=1ex]}]
& \bullet \,\, \cdots \\
& \bullet \arrow[ddd, blue, start anchor={[xshift=0.8ex, yshift=1ex]}, end anchor={[xshift=0.7ex,yshift=-0.8ex]}, bend right]
& \bullet \arrow[r, blue, start anchor={[xshift=-1ex]}, end anchor={[xshift=1ex]}]
& \bullet \arrow[r, blue, start anchor={[xshift=-1ex]}, end anchor={[xshift=1ex]}]
& \bullet \,\, \cdots \\
& \bullet \arrow[ddr, blue, start anchor={[xshift=-0.7ex,yshift=1ex]}, end anchor={[xshift=0.8ex,yshift=-0.8ex]}]
& \bullet \arrow[r, blue, start anchor={[xshift=-1ex]}, end anchor={[xshift=1ex]}]
& \bullet \arrow[r, blue, start anchor={[xshift=-1ex]}, end anchor={[xshift=1ex]}]
& \bullet \,\, \cdots \\
& \bullet \arrow[dd, blue, start anchor={[xshift=0.8ex,yshift=1ex]}, end anchor={[xshift=0.9ex,yshift=-0.8ex]}, bend right]
& \bullet \arrow[r, blue, start anchor={[xshift=-1ex]}, end anchor={[xshift=1ex]}]
& \bullet \arrow[r, blue, start anchor={[xshift=-1ex]}, end anchor={[xshift=1ex]}]
& \bullet \,\, \cdots \\
& \bullet \arrow[dr, blue, start anchor={[xshift=-1.3ex,yshift=0.8ex]}, end anchor={[xshift=1.2ex,yshift=-1.0ex]}]
& \bullet \arrow[r, blue, start anchor={[xshift=-1ex]}, end anchor={[xshift=1ex]}]
& \bullet \arrow[r, blue, start anchor={[xshift=-1ex]}, end anchor={[xshift=1ex]}]
& \bullet \,\, \cdots \\
\uparrow{i} & \bullet \arrow[d, blue, start anchor={[yshift=1ex]}, end anchor={[yshift=-0.8ex]}]
& \bullet \arrow[r, blue, start anchor={[xshift=-1ex]}, end anchor={[xshift=1ex]}]
& \bullet \arrow[r, blue, start anchor={[xshift=-1ex]}, end anchor={[xshift=1ex]}]
& \bullet \,\, \cdots\\
\blacksquare
& \bullet \arrow[r, blue, start anchor={[xshift=-1ex]}, end anchor={[xshift=1ex]}]
& \bullet \arrow[r, blue, start anchor={[xshift=-1ex]}, end anchor={[xshift=1ex]}]
& \bullet \arrow[r, blue, start anchor={[xshift=-1ex]}, end anchor={[xshift=1ex]}]
& \bullet \,\, \cdots \\
 & \xrightarrow{j} &&&
\end{tikzcd} \qquad \quad
{\color{blue}$\beta_1 \quad \hat{=} $ }
\begin{tikzcd}[column sep=0.8em, row sep=0.5em]
& \vdots & \vdots & \vdots& \vdots & \vdots \phantom{\,\,\cdots} \\[1pt]
& & \arrow[ddddr, blue, start anchor={[xshift=-1.5ex,yshift=2.8ex]}, end anchor={[xshift=0.5ex,yshift=-0.8ex]}] \arrow[dddd, blue, start anchor={[xshift=-0.2ex,yshift=2.8ex]}, end anchor={[xshift=0.8ex,yshift=-0.8ex]}, bend right] \arrow[dddr, blue, start anchor={[xshift=1ex,yshift=3ex]}, end anchor={[xshift=0.5ex,yshift=-0.8ex]}] \arrow[ddd, blue, start anchor={[xshift=-0.8ex,yshift=3ex]}, end anchor={[xshift=0.8ex,yshift=-0.8ex]}, bend right] \arrow[ddr, blue, start anchor={[xshift=3ex,yshift=3ex]}, end anchor={[xshift=0.8ex,yshift=-0.8ex]}]
\arrow[dd, blue, start anchor={[xshift=-1.2ex,yshift=3.2ex]}, end anchor={[xshift=0.8ex,yshift=-0.8ex]}, bend right]
\arrow[d, blue, start anchor={[xshift=-1ex,yshift=3.2ex]}, end anchor={[xshift=0.8ex,yshift=-0.8ex]}, bend right] \arrow[dr, blue, start anchor={[xshift=4ex,yshift=2.6ex]}, end anchor={[xshift=1.5ex,yshift=-0.8ex]}] & & & \\
&\bullet & \bullet \arrow[dddd, blue, start anchor={[xshift=0.8ex,yshift=1ex]}, end anchor={[xshift=0.8ex, yshift=-0.7ex]}, bend right]
& \bullet \arrow[r, blue, start anchor={[xshift=-1ex]}, end anchor={[xshift=1ex]}]
& \bullet \arrow[r, blue, start anchor={[xshift=-1ex]}, end anchor={[xshift=1ex]}]
& \bullet \,\, \cdots \\
&\bullet & \bullet \arrow[dddr, blue, start anchor={[xshift=-0.6ex,yshift=0.9ex]}, end anchor={[xshift=0.6ex,yshift=-0.8ex]}]
& \bullet \arrow[r, blue, start anchor={[xshift=-1ex]}, end anchor={[xshift=1ex]}]
& \bullet \arrow[r, blue, start anchor={[xshift=-1ex]}, end anchor={[xshift=1ex]}]
& \bullet \,\, \cdots \\
&\bullet & \bullet \arrow[ddd, blue, start anchor={[xshift=0.8ex, yshift=1ex]}, end anchor={[xshift=0.7ex,yshift=-0.8ex]}, bend right]
& \bullet \arrow[r, blue, start anchor={[xshift=-1ex]}, end anchor={[xshift=1ex]}]
& \bullet \arrow[r, blue, start anchor={[xshift=-1ex]}, end anchor={[xshift=1ex]}]
& \bullet \,\, \cdots \\
&\bullet & \bullet \arrow[ddr, blue, start anchor={[xshift=-0.7ex,yshift=1ex]}, end anchor={[xshift=0.8ex,yshift=-0.8ex]}]
& \bullet \arrow[r, blue, start anchor={[xshift=-1ex]}, end anchor={[xshift=1ex]}]
& \bullet \arrow[r, blue, start anchor={[xshift=-1ex]}, end anchor={[xshift=1ex]}]
& \bullet \,\, \cdots \\
&\bullet & \bullet \arrow[dd, blue, start anchor={[xshift=0.8ex,yshift=1ex]}, end anchor={[xshift=0.9ex,yshift=-0.8ex]}, bend right]
& \bullet \arrow[r, blue, start anchor={[xshift=-1ex]}, end anchor={[xshift=1ex]}]
& \bullet \arrow[r, blue, start anchor={[xshift=-1ex]}, end anchor={[xshift=1ex]}]
& \bullet \,\, \cdots \\
&\bullet & \bullet \arrow[dr, blue, start anchor={[xshift=-1.3ex,yshift=0.8ex]}, end anchor={[xshift=1.2ex,yshift=-1.0ex]}]
& \bullet \arrow[r, blue, start anchor={[xshift=-1ex]}, end anchor={[xshift=1ex]}]
& \bullet \arrow[r, blue, start anchor={[xshift=-1ex]}, end anchor={[xshift=1ex]}]
& \bullet \,\, \cdots \\
\uparrow{i} &\bullet & \bullet \arrow[d, blue, start anchor={[yshift=1ex]}, end anchor={[yshift=-0.8ex]}]
& \bullet \arrow[r, blue, start anchor={[xshift=-1ex]}, end anchor={[xshift=1ex]}]
& \bullet \arrow[r, blue, start anchor={[xshift=-1ex]}, end anchor={[xshift=1ex]}]
& \bullet \,\, \cdots\\
\blacksquare
&\bullet & \bullet \arrow[r, blue, start anchor={[xshift=-1ex]}, end anchor={[xshift=1ex]}]
& \bullet \arrow[r, blue, start anchor={[xshift=-1ex]}, end anchor={[xshift=1ex]}]
& \bullet \arrow[r, blue, start anchor={[xshift=-1ex]}, end anchor={[xshift=1ex]}]
& \bullet \,\, \cdots \\
 & \xrightarrow{j} &&&
\end{tikzcd} \qquad \quad
\caption{Visualization of the action of the automorphisms $\beta_0$ (left) and $\beta_1$ (right). Here $\blacksquare$ denotes an element of $\cA$ and $\bullet$ denotes an element of $\cC$, and the blue arrows indicate how the automorphisms act as shifts when considered on the index set $\Nset_0^2$.}
\label{figure:graphs}
\end{figure}
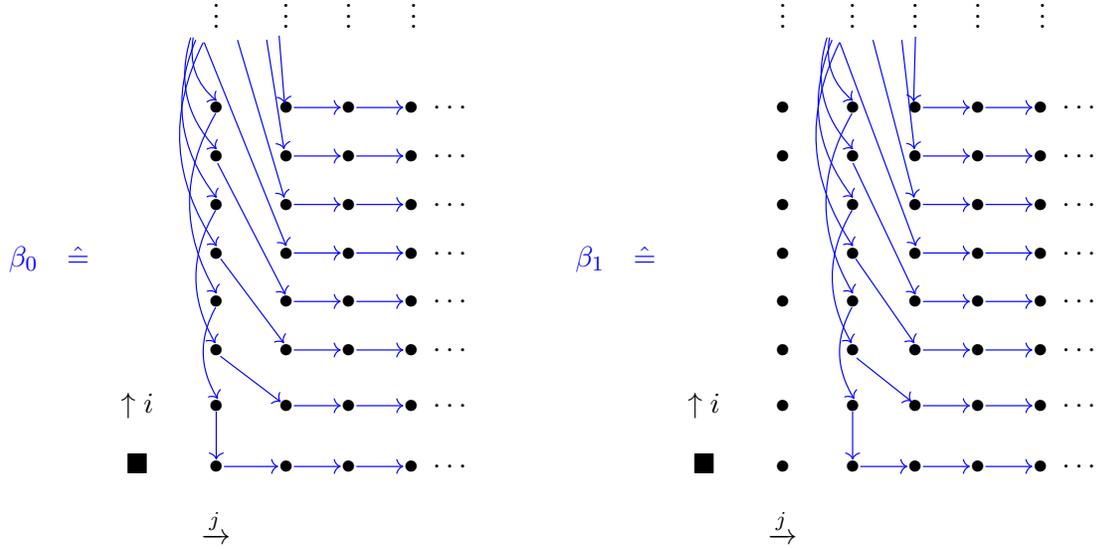
\color{black}

We note that the fixed point algebras
$\cM^{\beta_0}$ and $\cM^{\beta_1}$ of $\beta_0$ and $\beta_1$ are given by, respectively,
\begin{align}
\cM^{\beta_0}
&= \cA \otimes \1_\cC^{\otimes_{\Nset_0}}
 \otimes \1_\cC^{\otimes_{\Nset_0}}
 \otimes \1_\cC^{\otimes_{\Nset_0}}
 \otimes \cdots \label{eq:f-p-a-0}, \\[1ex]
\cM^{\beta_1}
&= \cA \otimes \cC^{\otimes_{\Nset_0}}
 \otimes \1_\cC^{\otimes_{\Nset_0}}
 \otimes \1_\cC^{\otimes_{\Nset_0}}
 \otimes \cdots \label{eq:f-p-a-1}.
\end{align}

Let $\cB_0 := \beta_0^{-1}
\big(\cA \otimes \1_\cC^{\otimes_{\Nset_0}}
\otimes \cC^{\otimes_{\Nset_0}}
\otimes \1_\cC^{\otimes_{\Nset_0}}
\otimes \cdots\big)$ which can be thought of as the ``present'' von Neumann subalgebra at time $n=0$ of the explicit form
\[
\begin{array}{ccccccccc}
&& \vdots & & \vdots & & \vdots & & \\
&& \otimes && \otimes && \otimes && \\
&& \1_\cC & & \1_\cC & & \1_\cC & & \\
&& \otimes & & \otimes & & \otimes & & \\
&& \cC & & \1_\cC & & \1_\cC & & \\
 && \otimes & & \otimes & & \otimes & & \\
 && \1_\cC & & \1_\cC & & \1_\cC & & \\
 &&\otimes & & \otimes & & \otimes & & \\
\cA & \otimes & \cC & \otimes & \1_\cC & \otimes & \1_\cC & \otimes & \cdots.
 \end{array}
 \]
\begin{Proposition}\label{proposition:rep-Fbeta}
The maps $g_n \mapsto \rho_B(g_n) := \beta_n$, with $n \in \Nset_0$, extend multiplicatively
to a~representation $\rho_B \colon F \to \Aut(\cM,\psi)$ which has the generating property. Further,
$(\cM, \psi, \beta_0, \cB_0)$ is a bilateral noncommutative Bernoulli shift with generator
$\cB_0$.
\end{Proposition}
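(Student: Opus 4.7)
The claim splits into three parts: well-definedness of $\rho_B$ as a representation, the generating property, and the Bernoulli shift structure.

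For well-definedness, each $\beta_n$ is induced from an explicit bijection $\phi_n \colon \Nset_0^2 \to \Nset_0^2$, acting as the identity on the $\cA$-factor and permuting the $\cC$-factors accordingly. Stationarity and commutation with the modular automorphism group are therefore automatic. The Thompson relations $\beta_k \beta_\ell = \beta_{\ell+1} \beta_k$ for $0 \le k < \ell$ reduce to the set-theoretic identity $\phi_\ell \circ \phi_k = \phi_k \circ \phi_{\ell+1}$ on $\Nset_0^2$, which I would verify on finite elementary tensors by a case analysis on the second coordinate $j$ in the subranges $[0, k-1]$, $\{k\}$, $\{k+1\}$, $[k+2, \ell]$, $\{\ell+1\}$, $\{\ell+2\}$, and $[\ell+3, \infty)$.

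For the generating property, the arguments yielding \eqref{eq:f-p-a-0} and \eqref{eq:f-p-a-1} generalise to
\[
\cM^{\beta_k} = \cA \otimes \underbrace{\cC^{\otimes_{\Nset_0}} \otimes \cdots \otimes \cC^{\otimes_{\Nset_0}}}_{k \text{ copies}} \otimes \1_\cC^{\otimes_{\Nset_0}} \otimes \1_\cC^{\otimes_{\Nset_0}} \otimes \cdots,
\]
because $\beta_k$ fixes the first $k$ columns pointwise while acting on the remaining columns as an ergodic permutation of tensor factors whose fixed point algebra reduces to the scalars. These algebras form an ascending chain with weak*-dense union $\cM$, hence $\cM_n = \bigcap_{k \ge n+1}\cM^{\beta_k} = \cM^{\beta_{n+1}}$ and $\cM_\infty = \cM$.

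For the Bernoulli shift, $\cM^{\beta_0} = \cA \subset \cB_0$ is immediate. Unwinding $\cB_0$ via $\phi_0^{-1}$ identifies it as the tensor subalgebra generated by the $\cA$-factor together with $\cC$-factors at the positions in $S_0 := \{(2m, 0) : m \in \Nset_0\}$. Iterating $\beta_0$ on indices then gives $\cB_n := \beta_0^n(\cB_0)$ supported over $\cA$ on a set $S_n \subset \Nset_0^2$ equal to the full $n$-th column when $n \ge 1$, and equal to $\{(i, 0) : i \equiv 2^k - 1 \pmod{2^{k+1}}\}$ when $n = -k \le -1$. The collection $\{S_n\}_{n \in \Zset}$ partitions $\Nset_0^2$, with the column-$0$ decomposition corresponding to the $2$-adic stratification of $\Nset_0$. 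Consequently, for disjoint $I, J \in \cI(\Zset)$ the algebras $\cB_I$ and $\cB_J$ are tensor subalgebras over $\cA$ with disjoint $\cC$-supports, and the required commuting square over $\cM^{\beta_0} = \cA$ is immediate from tensor independence of the product state. Minimality follows because the $S_n$ exhaust $\Nset_0^2$.

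The main technical input is the explicit description of $\cB_n$ for $n < 0$: the binary-doubling rule of $\beta_0$ on column $0$ produces a $2$-adic stratification of the row index, and verifying that these strata are pairwise disjoint and cover $\Nset_0$ is the key combinatorial step that makes the Bernoulli independence come out.
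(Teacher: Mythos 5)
Your proposal is correct and follows essentially the same route as the paper's proof: verify the relations $\beta_k\beta_\ell=\beta_{\ell+1}\beta_k$ on elementary tensors via the induced index bijections, and obtain the Bernoulli factorization from the fact that the supports $S_n\subset\Nset_0^2$ of the algebras $\beta_0^n(\cB_0)$ are pairwise disjoint and exhaust $\Nset_0^2$ (your $2$-adic description of the $S_{-k}$ makes explicit what the paper only asserts). The only divergence is minor: the paper deduces the generating property from minimality via Proposition~\ref{proposition:minimality-generating}, whereas you compute the fixed point algebras $\cM^{\beta_k}$ directly; both are valid.
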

\begin{proof}
For $0 \le k < \ell < \infty$, the relations $\beta_k \beta_\ell = \beta_{\ell+1} \beta_k$
are verified in a straightforward computation on finite elementary tensors. Since $\psi \circ \beta_n = \psi$, the
maps $g_n \mapsto \rho_B(g_n) := \beta_n$ extend to a representation of $F$ in $\Aut(\cM, \psi)$. The generating
property of this representation will follow from the minimality of the stationary process by
Proposition~\ref{proposition:minimality-generating}. Indeed, let $\cB_I := \bigvee_{i \in I}\beta_0^i(\cB_0)$ for
$I\in\cI(\Zset)$ and note that $\cB_{[0,0]} = \cB_0$. Clearly $\cB_{\Zset} = \cM$, hence the stationary
process $(\cM, \psi, \beta_0, \cB_0)$ is minimal. We are left to show that this minimal
stationary process is a bilateral noncommutative Bernoulli shift.
Clearly, $ \cM^{\beta_0} \subset \cB_0$. We are left to verify the factorization
\[Q_0(xy)=Q_0(x)Q_0(y)\] for any $x\in \cB_I, y\in \cB_J$ whenever $I \cap J = \varnothing$.
Here $Q_0$ is the $\psi$-preserving normal conditional expectation from $\cM$ onto
$\cM^{\beta_0}$ which is of the tensor type
\[
Q_0\biggl(a \otimes \biggl(\bigotimes_{(i,j)\in \Nset_0^2} x_{i,j}\biggr)\biggr) = a \otimes \biggl(\bigotimes_{(i,j)\in \Nset_0^2} \chi(x_{i,j})\1_{\cC}\biggr)
\]
for finite elementary tensors in $\cM$. Now the required factorization easily follows by observing that distinct powers of the ``time evolution'' $\beta_0$ send elements of $\cB_0$ to elements which are supported by disjoint index sets in $\Nset_0^2$.
\end{proof}

To obtain more general representations of the Thompson group $F$, we can further ``perturb'' the automorphisms $\beta_n$. Here
we focus on a very particular case of such perturbations, as it will turn out to be useful when constructing representations of $F$ from bilateral stationary noncommutative Markov processes.

Given an automorphism $\gamma \in \Aut(\cA \otimes \cC, \varphi \otimes \chi)$, let $\gamma_0 \in \Aut(\cM, \psi)$ denote its natural extension such that
\[
\gamma_0\biggl(a \otimes \biggl(\bigotimes_{(i,j)\in \Nset_0^2} x_{i,j}\biggr)\biggr) = \gamma(a \otimes x_{00}) \otimes \biggl(\bigotimes_{(i,j)\in \Nset_0^2 \setminus \{(0,0)\}} x_{i,j}\biggr).
\]
Furthermore, let
\[
\alpha_0 := \gamma_0 \circ \beta_0,\qquad
\alpha_n := \beta_{n} \qquad \text{for all}\quad n \ge 1.
\]
\begin{Proposition}\label{proposition:rep-Falpha}
The maps $g_n \mapsto \rho_M(g_n) := \alpha_n $, with $n \in \Nset_0$, extend multiplicatively
to a~representation $\rho_M \colon F \to \Aut(\cM,\psi)$ which has the generating property.
Further, the quadruple $(\cM, \psi, \alpha_0, \cM^{\alpha_1})$ is a bilateral noncommutative stationary Markov process.
\end{Proposition}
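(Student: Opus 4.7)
My plan is to verify three things in sequence: (i) the Thompson defining relations hold for the $\alpha_n$'s (so $\rho_M$ is a well-defined group representation), (ii) $\rho_M$ is generating, and (iii) apply Corollary~\ref{corollary-markov-filtration-0} to conclude the Markov process statement. Stationarity of each $\alpha_n$ is immediate from $\gamma\in\Aut(\cA\otimes\cC,\varphi\otimes\chi)$ and $\beta_n\in\Aut(\cM,\psi)$, so $\rho_M$ lands in $\Aut(\cM,\psi)$ once the relations are checked.

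For the relations $\alpha_k\alpha_\ell=\alpha_{\ell+1}\alpha_k$ with $0\le k<\ell$, the case $k\ge 1$ involves only $\beta$'s and reduces directly to Proposition~\ref{proposition:rep-Fbeta}. The only interesting case is $k=0$, $\ell\ge 1$. Using $\beta_0\beta_\ell=\beta_{\ell+1}\beta_0$ one computes
\[
\alpha_0\alpha_\ell=\gamma_0\beta_0\beta_\ell=\gamma_0\beta_{\ell+1}\beta_0,\qquad \alpha_{\ell+1}\alpha_0=\beta_{\ell+1}\gamma_0\beta_0,
\]
so what remains is the commutation $\gamma_0\beta_m=\beta_m\gamma_0$ for every $m\ge 2$. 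I would verify this by a straightforward support computation: $\gamma_0$ acts only on the $\cA$ factor and on the $(0,0)$ position of $\cC^{\otimes_{\Nset_0^2}}$, whereas by the defining formula for $\beta_m$ (with $m\ge 2$), $\beta_m$ acts as the identity on the $\cA$ factor and on every $\cC$-slot $(i,j)$ with $j\le m-1$, in particular on $(0,0)$. I expect this localization bookkeeping to be the main, though not serious, obstacle.

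For the generating property, observe that $\cM^{\alpha_k}=\cM^{\beta_k}$ for $k\ge 1$. A direct inspection of the shift picture in Figure~\ref{figure:graphs}, generalizing~\eqref{eq:f-p-a-0}--\eqref{eq:f-p-a-1}, shows that $\cM^{\beta_k}$ is $\cA$ tensored with $\cC^{\otimes_{\Nset_0}}$ in the first $k$ columns and $\1_\cC^{\otimes_{\Nset_0}}$ in every column $j\ge k$. Hence $\cM^{\beta_k}$ is increasing in $k$, so the intersection collapses to its smallest term:
\[
\cM_n=\bigcap_{k\ge n+1}\cM^{\beta_k}=\cM^{\beta_{n+1}}=\cM^{\alpha_{n+1}},
\]
and $\cM_\infty=\bigvee_{n\ge 0}\cM^{\alpha_{n+1}}$ contains elementary tensors free in every column, whose weak*-closure is $\cM$. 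In particular $\cM_0=\cM^{\alpha_1}$, so the quadruple $(\cM,\psi,\alpha_0,\cM^{\alpha_1})$ coincides with $(\cM,\psi,\alpha_0,\cM_0)$ and Corollary~\ref{corollary-markov-filtration-0} delivers the bilateral stationary Markov process claim.
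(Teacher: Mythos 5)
Your proposal is correct and follows the same overall skeleton as the paper's proof: verify the Thompson relations (with the only nontrivial case $k=0<\ell$ reduced, exactly as you do, to the commutation of $\gamma_0$ with $\beta_{\ell+1}$, which holds because $\gamma_0$ is supported on $\cA$ and the $(0,0)$ slot while $\beta_{\ell+1}$ with $\ell+1\ge 2$ fixes that tensor factor pointwise), establish the generating property, and then invoke the commuting-square machinery of Section~3. The one place you genuinely diverge is the generating property: the paper deduces it from minimality of the associated stationary process via Proposition~\ref{proposition:minimality-generating}, whereas you compute the fixed point algebras $\cM^{\beta_k}$ explicitly, observe that they form an increasing tower so that the intersection $\cM_n=\bigcap_{k\ge n+1}\cM^{\beta_k}$ collapses to $\cM^{\beta_{n+1}}$ and $\cM_\infty=\cM$. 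Your route is more self-contained (the paper's appeal to minimality is left at the level of ``similar arguments'') and has the bonus of directly identifying $\cM_0=\cM^{\alpha_1}$, which lets you quote Corollary~\ref{corollary-markov-filtration-0} instead of the paper's Corollary~\ref{corollary:markov-filtration-MN}; the two are interchangeable here, since the latter is derived from the former precisely via the identification $\cM_n=\cM^{\alpha_{n+1}}$ for generating representations.
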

\begin{proof}
For $1 \leq k <\ell$, the relations $\alpha_k \alpha_{\ell} = \alpha_{\ell+1} \alpha_k$ are those of the $\beta_n$-s
from Proposition~\ref{proposition:rep-Fbeta}. The relations $\alpha_0 \alpha_{\ell} = \alpha_{\ell+1}\alpha_0$ for
$l >0$ are verified on finite elementary tensors by a straightforward computation. Similar arguments as used in the proof of
Proposition~\ref{proposition:rep-Fbeta} ensure that the maps $g_n \mapsto \rho_M(g_n) := \alpha_n $ extend multiplicatively
to a representation $\rho_M \colon F \to \Aut(\cM,\psi)$. Its generating property is again immediate from the
minimality of the stationary process by Proposition~\ref{proposition:minimality-generating}. Finally, the Markovianity of the
bilateral stationary process $(\cM, \psi, \alpha_0, \cM^{\alpha_1})$ follows from Corollary
\ref{corollary:markov-filtration-MN}.
\end{proof}

Given the stationary Markov process $(\cM, \psi, \alpha_0, \cM^{\alpha_1})$ (from Proposition~\ref{proposition:rep-Falpha}),
a restriction of the generating algebra $\cM^{\alpha_1}$ to a von Neumann subalgebra $\cA_0$ provides a candidate
for another stationary Markov process. Viewing the Markov shift $\alpha_0$ as a ``perturbation'' of the Bernoulli
shift $\beta_0$, the subalgebra $\cA_0 = \cM^{\beta_0}$ is an interesting choice.

\begin{Proposition} \label{proposition:MarkovTwoReps2}
The quadruple $\big(\cM, \psi, \alpha_0, \cM^{\beta_0}\big)$ is a bilateral noncommutative stationary Markov process.
\end{Proposition}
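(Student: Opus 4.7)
The plan is to verify Markovianity of $(\cM,\psi,\alpha_0,\cM^{\beta_0})$ via Lemma~\ref{lemma:Mark-Suff}, which reduces the task to proving $P_{(-\infty,0]}P_{[0,\infty)}=P_{[0,0]}$, where $P_I$ denotes the $\psi$-preserving normal conditional expectation onto $\cA_I:=\bigvee_{i\in I}\alpha_0^i(\cM^{\beta_0})$. I note that Theorem~\ref{theorem:markov-filtration-1} is \emph{not} directly applicable: its hypothesis would force $E_{\cM^{\alpha_1}}(\alpha_0(\cM^{\beta_0}))\subset \cM^{\beta_0}$, but $\alpha_0(\cM^{\beta_0})=\gamma_0(\cA)=\gamma(\cA\otimes \1_\cC)$ already sits inside $\cM^{\alpha_1}$ while typically not being contained in $\cM^{\beta_0}=\cA$.

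The strategy is to localise the past and future algebras into disjoint tensor factors over the common fibre $\cA=\cM^{\beta_0}$ and then invoke the tensor product structure of $\psi$. Define
\[
\mathcal{Y}:=\cA\otimes \bigotimes_{j\ge 0}\cC_{(0,j)}, \qquad \mathcal{X}:=\cA\otimes \bigotimes_{i\ge 1}\cC_{(i,0)}.
\]
I would first show that $\alpha_0$ preserves $\mathcal{Y}$ and that $\alpha_0^{-1}$ preserves $\mathcal{X}$. For $\mathcal{Y}$: $\beta_0$ sends $\cC_{(0,0)}\mapsto \cC_{(0,1)}$ and $\cC_{(0,j)}\mapsto \cC_{(0,j+1)}$ for $j\ge 1$, so $\beta_0(\mathcal{Y})\subset \mathcal{Y}$, and $\gamma_0$ is localised on $\cA\otimes \cC_{(0,0)}\subset \mathcal{Y}$. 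For $\mathcal{X}$: $\beta_0^{-1}$ sends $\cC_{(i,0)}\mapsto \cC_{(2i+1,0)}$ for all $i\ge 0$, hence $\beta_0^{-1}(\mathcal{X})\subset \mathcal{X}$; and although $\gamma_0^{-1}$ can introduce the $\cC_{(0,0)}$ factor, the subsequent $\beta_0^{-1}$ immediately shifts it to $\cC_{(1,0)}\subset \mathcal{X}$. Since $\cA\subset \mathcal{X}\cap \mathcal{Y}$, iterating these invariance properties yields $\cA_{[0,\infty)}\subset \mathcal{Y}$ and $\cA_{(-\infty,0]}\subset \mathcal{X}$.

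The index sets $\{(0,j):j\ge 0\}$ and $\{(i,0):i\ge 1\}$ are disjoint subsets of $\Nset_0^2$, so since $\psi$ is a tensor product state the algebras $\mathcal{X}$ and $\mathcal{Y}$ form a tensor-type commuting square over $\cA$: $E_{\mathcal{X}}E_{\mathcal{Y}}=E_{\cA}$ and $\mathcal{X}\cap \mathcal{Y}=\cA=\cM^{\beta_0}$. The required Markov identity then follows from the chain
\[
P_{(-\infty,0]}P_{[0,\infty)} \,=\, P_{(-\infty,0]}E_{\mathcal{X}}P_{[0,\infty)} \,=\, P_{(-\infty,0]}E_{\cA}P_{[0,\infty)} \,=\, P_{(-\infty,0]}E_{\cA} \,=\, E_{\cA} \,=\, P_{[0,0]},
\]
using in order $\cA_{(-\infty,0]}\subset \mathcal{X}$; the identity $E_{\mathcal{X}}|_{\mathcal{Y}}=E_{\cA}|_{\mathcal{Y}}$ from the commuting square applied to $P_{[0,\infty)}(x)\in \cA_{[0,\infty)}\subset \mathcal{Y}$; the inclusion $\cA\subset \cA_{[0,\infty)}$; and the inclusion $\cA\subset \cA_{(-\infty,0]}$.

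The main obstacle is the verification $\alpha_0^{-1}(\mathcal{X})\subset \mathcal{X}$, which hinges on the fortunate coincidence that $\beta_0^{-1}$ removes the $\cC_{(0,0)}$ factor freshly introduced by $\gamma_0^{-1}$ into a column-zero position $\ge 1$. This delicate combinatorial fact---that forward iterations of $\alpha_0$ populate only row $0$ while backward iterations populate only column $0$ while avoiding position $(0,0)$---is exactly what separates past and future into tensor-disjoint regions and makes the commuting square purely geometric.
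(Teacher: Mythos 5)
Your proof is correct and follows essentially the same route as the paper: both arguments sandwich the past algebra $\cA_{(-\infty,0]}$ and the future algebra $\cA_{[0,\infty)}$ into tensor-disjoint subalgebras over $\cA=\cM^{\beta_0}$ (the paper uses $\cD_{<}=\bigvee_{i<0}\beta_0^i(\cA\otimes\cC_{(0,0)})$, a subalgebra of your $\mathcal{X}$, and $\cD_{\ge}=\bigvee_{i\ge 0}\beta_0^i(\cA\otimes\cC_{(0,0)})$, which coincides with your $\mathcal{Y}$), and then conclude via the tensor-type commuting square $E_{\cD_{<}}E_{\cD_{\ge}}=P_{[0,0]}$ together with Lemma~\ref{lemma:Mark-Suff}. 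The only difference is cosmetic: your slightly larger past envelope makes the invariance $\alpha_0^{-1}(\mathcal{X})\subset\mathcal{X}$ a one-line check, whereas the paper generates its envelopes as $\beta_0$-translates of $\cA\otimes\cC_{(0,0)}$.
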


\begin{proof}We recall from \eqref{eq:f-p-a-0} that
\begin{align*}
 \cM^{\beta_0} &= \cA \otimes \1_\cC^{\otimes_{\Nset_0}}
 \otimes \1_\cC^{\otimes_{\Nset_0}}
 \otimes \1_\cC^{\otimes_{\Nset_0}}
 \otimes \cdots.
\end{align*}
{\sloppy
Let $P_I$ denote the $\psi$-preserving normal conditional expectation from $\cM$ onto
$ \cA_{I} := \bigvee_{i \in I} \alpha_0^i\big(\cM^{\beta_0}\big)$ for an interval $I \subset \Zset$.
By Lemma \ref{lemma:Mark-Suff}, it suffices to verify the Markov property
\[
P_{(-\infty,0]}P_{[0,\infty)} = P_{[0,0]}.
\]}
For this purpose we use the von Neumann subalgebra
\[
\cD_0 :=
\begin{array}{ccccccccc}
&& \vdots & & \vdots & & \vdots & & \\
 && \otimes & & \otimes & & \otimes & & \\
 && \1_\cC & & \1_\cC & & \1_\cC & & \cdots \\
 &&\otimes & & \otimes & & \otimes & & \\
\cA & \otimes & \cC & \otimes & \1_\cC & \otimes & \1_\cC & \otimes & \cdots \\
\end{array}
\]
and the tensor shift $\beta_0$ to generate the ``past algebra'' $\cD_{<} := \bigvee_{i < 0} \beta_0^i(\cD_0)$ and
the ``future algebra'' $\cD_{\ge } := \bigvee_{i\ge 0} \beta_0^i(\cD_0)$.
One has the inclusions
\[
\cA_{(-\infty,0]} \subset \cD_{<}, \qquad
\cA_{[0,\infty)} \subset \cD_{\ge}, \qquad
\cD_{<} \cap \cD_{\ge} = \cM^{\beta_0}.
\]
Here we used for
the first inclusion that $\alpha_0 = \gamma_0 \circ \beta_0$ and thus
$\alpha_0^{-1} = \beta_0^{-1} \circ \gamma_0^{-1}$. The second inclusion is
immediate from the definitions of the von Neumann algebras. Finally, the claimed
intersection property is readily deduced from the underlying tensor product structure.
Let $E_{\cD_{<}}$ and $E_{\cD_{\ge}}$ denote the $\psi$-preserving normal conditional expectations from $\cM$ onto
$ \cD_{<} $ and $ \cD_{\ge}$, respectively. We observe that $E_{\cD_{<}} E_{\cD_{\ge}} = P_{[0,0]}$ is immediately
deduced from the tensor product structure of the probability space $(\cM,\psi)$. But this allows us to compute
\begin{gather*}
 P_{(-\infty,0]}P_{[0,\infty)}
 = P_{(-\infty,0]} E_{\cD_{<}} E_{\cD_{\ge}} P_{[0,\infty)}
 = P_{(-\infty,0]} P_{[0,0]} P_{[0,\infty)}
 = P_{[0,0]}.\tag*{\qed}
\end{gather*}
\renewcommand{\qed}{}
\end{proof}

\begin{Remark}
The above constructed bilateral noncommutative stationary Markov process $\big(\cM, \psi, \alpha_0, \cM^{\beta_0}\big)$ is not minimal, as
the von Neumann algebra $\mathcal{A}_{\mathbb{Z}}$ generated by $\alpha_0^n\big(\cM^{\beta_0}\big)$ for all $n \in \Zset$ is clearly contained in the subalgebra
\[
\begin{array}{@{}ccccccccc}
&& \vdots & & \vdots & & \vdots & & \\
&& \otimes && \otimes && \otimes && \\
&& \cC & & \1_\cC & & \1_\cC & & \\
&& \otimes & & \otimes & & \otimes & & \\
&& \cC & & \1_\cC & & \1_\cC & & \\
 && \otimes & & \otimes & & \otimes & & \\
 && \cC & & \1_\cC & & \1_\cC & & \\
 &&\otimes & & \otimes & & \otimes & & \\
\cA & \otimes & \cC & \otimes & \cC & \otimes & \cC & \otimes & \cdots.
 \end{array}
 \]
 The subalgebra $\mathcal{A}_{\mathbb{Z}}$ is invariant under the action of $\alpha_0 = \rho_M(g_0)$ and its inverse, but it fails to be invariant under the action of the inverse of $\alpha_1=\rho_M(g_1)$. This illustrates that the von
Neumann algebra of a bilateral stationary Markov process may be ``too small'' to carry a representation of the Thompson group $F$ such that its Markov shift represents the generator $g_0 \in F$.
\end{Remark}

\subsection[Constructions of representations of F from stationary Markov processes]{Constructions of representations of $\boldsymbol F$ from stationary Markov processes} \label{subsection:constr-rep-F}

The following theorem uses the tensor product construction of the present section to show that automorphisms on tensor products give representations of $F$ such that the compressed automorphism is equal to a compressed represented generator.

Throughout this section we will use the following notion of an embedding for two noncommutative probability spaces $(\cA,\varphi)$ and $(\cM,\psi)$. An \emph{embedding} $\iota\colon (\cA,\varphi) \to (\cM,\psi)$ is a~$(\varphi,\psi)$-Markov map $\iota \colon \cA \to \cM$ which is also a $*$-homomorphism. Furthermore, recall the notion
of a~dilation of first order from Definition~\ref{definition:dilation}.

\begin{Theorem}\label{theorem:TensorMarkovF}
Suppose $\gamma \in \Aut(\cA \otimes \cC, \varphi \otimes \chi)$ and let $\iota_0$ be the canonical embedding of $(\cA,\varphi)$ into $(\cA \otimes \cC, \varphi \otimes \chi)$. Then there exists a noncommutative probability space $(\cM,\psi)$, generating representations
$\rho_B, \rho_M \colon F \to \Aut(\cM,\psi)$ and an embedding
$\kappa \colon (\cA \otimes \cC, \varphi \otimes \chi) \to (\cM,\psi)$ such~that
\begin{enumerate}\itemsep=0pt
 \item[$(i)$] $\kappa \iota_0 (\cA) = \cM^{\rho_B(g_0)}$,
 \item[$(ii)$] $\iota_0^*\gamma^n \iota_0 = \iota_0^* \kappa^* \rho_M(g_0^n)\kappa \iota_0$
	for all $n \in \Nset_0$.
\end{enumerate}
In particular, $\big(\cM, \psi, \rho_M(g_0), \cM^{\rho_B(g_0)}\big)$ is
a bilateral noncommutative stationary Markov process.
\end{Theorem}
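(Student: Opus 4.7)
The plan is to realize the conclusion inside the concrete model developed in Section~\ref{subsection:Example}. Take
\[
(\cM,\psi) := \bigl(\cA\otimes\cC^{\otimes_{\Nset_0^2}}, \varphi\otimes\chi^{\otimes_{\Nset_0^2}}\bigr),
\]
let $\beta_n$ and $\alpha_n=\gamma_0\beta_0$ (for $n=0$), $\alpha_n=\beta_n$ (for $n\ge 1$) be the automorphisms from Section~\ref{subsection:Example}, and set $\rho_B(g_n):=\beta_n$ and $\rho_M(g_n):=\alpha_n$. Propositions~\ref{proposition:rep-Fbeta} and~\ref{proposition:rep-Falpha} furnish the two generating representations. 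For $\kappa$ I would take the canonical $*$-embedding identifying $\cA\otimes\cC$ with $\cA\otimes\cC_{(0,0)}\otimes\1_\cC^{\otimes(\Nset_0^2\setminus\{(0,0)\})}\subset\cM$; it is $(\varphi\otimes\chi,\psi)$-Markov. Its adjoint $\kappa^*$ is the $\psi$-preserving conditional expectation onto this copy of $\cA\otimes\cC$, so $\iota_0^*\kappa^*=\id_\cA\otimes\chi^{\otimes_{\Nset_0^2}}$ averages out everything but the $\cA$-factor.

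For (i), note that $\kappa\iota_0(\cA)=\cA\otimes\1_\cC^{\otimes_{\Nset_0^2}}$, which by~\eqref{eq:f-p-a-0} equals $\cM^{\beta_0}=\cM^{\rho_B(g_0)}$. Thus the embedding $\tau:=\kappa\iota_0\colon\cA\to\cM$ is a $*$-isomorphism onto $\cM^{\rho_B(g_0)}$, and its adjoint satisfies $\iota_0^*\kappa^*=\tau^{-1}\circ P$, where $P$ denotes the $\psi$-preserving conditional expectation from $\cM$ onto $\cM^{\beta_0}$.

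For (ii), the key observation is that $\beta_0$ fixes $\tau(\cA)$ pointwise (all tensor entries are $\1_\cC$), while $\gamma_0$ acts on $\tau(a)=a\otimes\1_\cC^{\otimes}$ by $\gamma$ on the $\cA\otimes\cC_{(0,0)}$ slot. Applying $P$ and then $\tau^{-1}$ amounts to $\id_\cA\otimes\chi$ on the $\cA\otimes\cC$ slot, giving
\[
\tau^{-1} P \,\alpha_0\, \tau \;=\; \iota_0^*\gamma\iota_0 \;=\; T.
\]
In other words, writing $\iota_0'\colon\cM^{\beta_0}\hookrightarrow\cM$ and $T':=\iota_0'^{*}\alpha_0\iota_0'$ for the transition operator of the stationary process $(\cM,\psi,\alpha_0,\cM^{\beta_0})$ from Proposition~\ref{proposition:MarkovTwoReps2}, one has the intertwining $T'\tau=\tau T$, hence $(T')^n\tau=\tau T^n$ for every $n\in\Nset_0$. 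Since $(\cM,\psi,\alpha_0,\cM^{\beta_0})$ is a bilateral stationary Markov process, Proposition~\ref{proposition:dilation} gives the dilation identity $(T')^n=\iota_0'^{*}\alpha_0^n\iota_0'$, so that
\[
\iota_0^*\kappa^{*}\rho_M(g_0^n)\kappa\iota_0 \;=\; \tau^{-1}\iota_0'^{*}\alpha_0^n\iota_0'\tau \;=\; \tau^{-1}(T')^n\tau \;=\; T^n \;=\; \iota_0^*\gamma^n\iota_0,
\]
as required. Finally, $\bigl(\cM,\psi,\rho_M(g_0),\cM^{\rho_B(g_0)}\bigr)=(\cM,\psi,\alpha_0,\cM^{\beta_0})$ is a bilateral stationary noncommutative Markov process by Proposition~\ref{proposition:MarkovTwoReps2}.

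I expect the main obstacle to be bookkeeping, not substance: the delicate point is justifying the intertwining $T'\tau=\tau T$ cleanly from the definitions of $\beta_0$ and $\gamma_0$, so that the whole argument reduces to invoking Proposition~\ref{proposition:dilation} for the already-established Markov process of Proposition~\ref{proposition:MarkovTwoReps2}. Once this intertwining is verified, the dilation identity and identification of $\iota_0^*\kappa^*$ with $\tau^{-1}\circ P$ do the rest.
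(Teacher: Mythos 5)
Your proposal is essentially the paper's own proof in slightly different notation: the same space $(\cM,\psi)=\big(\cA\otimes\cC^{\otimes_{\Nset_0^2}},\varphi\otimes\chi^{\otimes_{\Nset_0^2}}\big)$, the same canonical $\kappa$, the representations of Propositions~\ref{proposition:rep-Fbeta} and~\ref{proposition:rep-Falpha}, claim $(i)$ via \eqref{eq:f-p-a-0}, and claim $(ii)$ by identifying the transition operator of the Markov process $\big(\cM,\psi,\alpha_0,\cM^{\beta_0}\big)$ of Proposition~\ref{proposition:MarkovTwoReps2} with $\iota_0^*\gamma\iota_0$ and then invoking the dilation property from Proposition~\ref{proposition:dilation}. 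Your intertwining $T'\tau=\tau T$ is exactly the paper's identity \eqref{equation:Tgamma} rewritten with $\tau=\kappa\iota_0$, and your verification of it is sound.

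One step deserves scrutiny, and it is precisely the step the paper also leaves implicit: the final equality $T^n=\iota_0^*\gamma^n\iota_0$. Everything before it yields $\iota_0^*\kappa^*\rho_M(g_0^n)\kappa\iota_0=T^n=(\iota_0^*\gamma\iota_0)^n$; to conclude $(ii)$ one still needs $(\iota_0^*\gamma\iota_0)^n=\iota_0^*\gamma^n\iota_0$, i.e., that $(\cA\otimes\cC,\varphi\otimes\chi,\gamma,\iota_0)$ is a dilation of $R:=\iota_0^*\gamma\iota_0$ and not merely a dilation of first order. This is not automatic for an arbitrary $\gamma\in\Aut(\cA\otimes\cC,\varphi\otimes\chi)$: for the tensor flip on $\Cset^2\otimes\Cset^2$ with the uniform trace one has $\gamma^2=\Id$, hence $\iota_0^*\gamma^2\iota_0=\Id$, whereas $(\iota_0^*\gamma\iota_0)^2=\chi(\cdot)\1\neq\Id$. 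In the intended application (Theorem~\ref{theorem:F-gen-compression}) the missing property is supplied by K\"ummerer's Theorem~\ref{theorem:kuemmerer-twosided}, which produces a full Markov dilation, so nothing is lost there; but a self-contained write-up of Theorem~\ref{theorem:TensorMarkovF} should either assume $\iota_0^*\gamma^n\iota_0=(\iota_0^*\gamma\iota_0)^n$ for all $n$, or state $(ii)$ with $(\iota_0^*\gamma\iota_0)^n$ on the left-hand side. Since your argument reproduces the paper's reasoning at this point, this is a remark on the statement and the published proof rather than a defect specific to your attempt.
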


\begin{proof}
We take
\[
	(\cM, \psi)
	:= \big(\cA \otimes \cC^{\otimes_{\Nset_0^2}}, \varphi \otimes \chi^{\otimes_{\Nset_0^2}}\big)
\]
and let $\kappa$ be the natural
embedding of $(\cA \otimes \cC, \varphi \otimes \chi)$ into $(\cM,\psi)$.
We construct two representations of the Thompson group $F$ as done for the illustrative example in
Section~\ref{subsection:Example}. That is, we define the representation $\rho_B \colon F \to
\Aut(\cM,\psi)$ as $\rho_B(g_n):=\beta_n$ for $n \ge 0$ (see Proposition~\ref{proposition:rep-Fbeta}) and the representation $\rho_M \colon F \to
\Aut(\cM,\psi)$ as $\rho_M(g_n):=\alpha_n$ with $\alpha_0 = \gamma_0 \circ \beta_0$ and $\alpha_n = \beta_n$ for $n \ge 1$ (see Proposition~\ref{proposition:rep-Falpha}).
The generating property of these two representations $\rho_B$ and~$\rho_M$ has already been verified in Propositions~\ref{proposition:rep-Fbeta} and~\ref{proposition:rep-Falpha}.
We recall from Section~\ref{subsection:Example} that $\gamma_0$ is the natural extension of $\gamma$ to an automorphism on $(\cM, \psi)$ which is easily seen to satisfy
\begin{equation} \label{equation:gamma0}
\kappa^*\gamma_0^n \kappa \iota_0 = \gamma^n \iota_0 \qquad \text{for all}\quad n \in \Nset_0.
\end{equation}
Note that for the case $n=1$, the left hand side of this equation can be written as
\begin{equation} \label{equation:gamma0first}
\kappa^*\gamma_0 \kappa \iota_0
=
\kappa^*\gamma_0 \beta_0\kappa \iota_0 =
\kappa^*\alpha_0 \kappa \iota_0.
\end{equation}

Now Proposition~\ref{proposition:MarkovTwoReps2} ensures that $\big(\cM,\psi,\alpha_0,\cM^{\beta_0}\big)$ is a bilateral
noncommutative stationary Markov process with $\kappa \iota_0(\cA)=\cM^{\beta_0}$, as claimed in~$(i)$ of the theorem.
We note that $\kappa \iota_0 (\kappa \iota_0)^*$
is the $\psi$-preserving normal conditional expectation
from $\cM$ onto $\cM^{\beta_0} =
\kappa \iota_0(\cA)$, and by definition, the stationary Markov process
$\big(\cM,\psi,\alpha_0,\cM^{\beta_0}\big)$ has the transition operator
\[
T := \kappa \iota_0 (\kappa \iota_0)^* \alpha_0 \kappa \iota_0 (\kappa \iota_0)^*.
\]

We observe that \eqref{equation:gamma0} and \eqref{equation:gamma0first} allow us to rewrite $T$ as follows:
\begin{align}
T &= \kappa \iota_0 (\kappa \iota_0)^* \alpha_0 \kappa \iota_0 (\kappa \iota_0)^*\nonumber
= \kappa \iota_0\iota_0^*(\kappa^*\alpha_0 \kappa\iota_0)(\kappa\iota_0)^*
\\
&= \kappa\iota_0\iota_0^*(\kappa^*\gamma_0\kappa\iota_0)\iota_0^* \kappa^*
=\kappa\iota_0\iota_0^*\gamma\iota_0\iota_0^*\kappa^*.
\label{equation:Tgamma}
\end{align}

On the other hand, Proposition~\ref{proposition:dilation} gives that
$T$ satisfies
\begin{equation}\label{equation:dilation}
T^n = \kappa \iota_0 (\kappa \iota_0)^* \alpha_0^n \kappa \iota_0 (\kappa \iota_0)^* \qquad
\text{for all}\quad n \in \Nset_0.
\end{equation}

Hence by \eqref{equation:Tgamma} and \eqref{equation:dilation},
\begin{align*}
(\kappa \iota_0\iota_0^*) \gamma^n (\kappa \iota_0\iota_0^*)^*
=[(\kappa \iota_0\iota_0^*) \gamma (\kappa \iota_0\iota_0^*)^*]^n
=T^n= \kappa \iota_0 (\kappa \iota_0)^* \alpha_0^n \kappa \iota_0 (\kappa \iota_0)^*.
\end{align*}

Simplifying, we get
\[
\iota_0^* \gamma^n \iota_0 = \iota_0^* \kappa^* \alpha_0^n \kappa \iota_0 \qquad \text{for all}\quad n \in \Nset_0,
\]
as claimed in $(ii)$ of the theorem.
\end{proof}

This result builds on an observation
related to the existence of Markov dilations already made by K\"ummerer in {\cite[Theorem 4.2.1]{Ku85}}: if a $\varphi$-Markov map $R$ on $\cA$ has a
tensor dilation of first order $(\cA \otimes \cC, \varphi \otimes \chi, \gamma, \iota_0)$, then this implies the existence of a (Markov) dilation on
the noncommutative probability space
$\big(\cA \otimes \cC^{\otimes_{\Zset}^{}}_{}, \varphi \otimes \chi^{\otimes_{\Zset}^{}}_{}\big)$. Here we have utilized this fact and amplified further the dilation to the noncommutative probability space $(\cM,\psi) = \big(\cA \otimes \cC^{\otimes_{\Nset_0^2}} , \varphi \otimes \chi^{\otimes_{\Nset_0^2}}\big)$, such that a~representation of the Thompson group $F$ can be accommodated.

\subsection{The classical case} \label{subsection:constr-classical}

We state a result of K\"ummerer that provides a tensor dilation of any Markov map on a commutative von Neumann algebra. This will allow us to obtain a representation of $F$ as in Theorem~\ref{theorem:TensorMarkovF}.

\begin{Notation} \label{notation:lebesgue}
The (non)commutative probability space $(\cL, \trace_\lambda)$
is given by the Lebesgue space of essentially bounded functions $\cL := L^\infty([0,1],\lambda)$
and $\trace_\lambda := \int_{[0,1]} \cdot\, {\rm d}\lambda$ as the faithful normal state on $\cL$.
Here $\lambda$ denotes the Lebesgue measure on the unit interval $[0,1] \subset \Rset$.
\end{Notation}
\begin{Theorem}[{\cite[4.4.2]{Ku86}}] \label{theorem:kuemmerer-twosided}
Let $R$ be a $\varphi$-Markov map on $\cA$, where $\cA$ is a commutative von
Neumann algebra with separable predual. Then there exists
$\gamma \in \Aut(\cA \otimes \cL, \varphi \otimes \trace_\lambda)$
such that $(\cA\otimes \cL, \varphi\otimes \trace_{\lambda}, \gamma, \iota_0)$ is a Markov $($tensor$)$
dilation of $R$. That is, $(\cA\otimes \cL, \varphi\otimes \trace_{\lambda}, \gamma, \cA\otimes
\1_{\cL})$ is a stationary Markov process, and for all $n \in \Nset_0$,
\[
R^n = \iota_0^* \, \gamma^n \iota_0,
\]
where $\iota_0 \colon (\cA,\varphi) \to (\cA \otimes \cL, \varphi \otimes \trace_\lambda)$
denotes the canonical embedding $\iota_0(a) = a \otimes \1_\cL$ such
that $E_0 := \iota_0 \circ \iota_0^* $ is the $\varphi \otimes \trace_\lambda$-preserving
normal conditional expectation from $\cA \otimes \cL$ onto $\cA \otimes \1_{\cL}$.
\end{Theorem}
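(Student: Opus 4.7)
The plan is to classically realize $R$ as the shift on the two-sided path space of a stationary Markov chain, and then transport this realization to $\cA \otimes \cL$ via a measure-preserving isomorphism.

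First I would use that $\cA$ is commutative with separable predual to identify $(\cA, \varphi) \cong (L^\infty(X, \mu), \int \cdot\, {\rm d}\mu)$ for some standard Borel probability space $(X, \mu)$, and translate $R$ into a $\mu$-preserving Markov transition kernel $P(x, {\rm d}y)$. This kernel determines a shift-invariant probability measure $\mathbb{P}$ on $X^\Zset$ realizing the bilateral stationary Markov chain $(X_n)_{n \in \Zset}$ with transitions $P$ and one-dimensional marginal $\mu$. The left-shift $\sigma$ on $X^\Zset$ is invertible and $\mathbb{P}$-preserving, and the inclusion $j \colon L^\infty(X,\mu) \to L^\infty(X^\Zset, \mathbb{P})$ given by composition with the zero-coordinate projection satisfies $j^* \sigma^n j = R^n$ for all $n \in \Nset_0$, while the local filtration $\{\sigma^n j(\cA)\}_{n \in \Zset}$ is Markovian by the classical Markov property of $(X_n)$.

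The second step would be to recast this path-space dilation in the tensor form demanded by the statement. Using the standard Borel isomorphism theorem together with the disintegration of $\mathbb{P}$ over the projection $\pi_0 \colon X^\Zset \to X$, I would produce a measure space isomorphism $\Psi \colon (X^\Zset, \mathbb{P}) \to (X \times [0,1], \mu \otimes \lambda)$ that commutes with projection onto $X$, by measurably identifying the conditional fibers $(\pi_0^{-1}(x), \mathbb{P}(\cdot | x))$ with $([0,1], \lambda)$, absorbing possible atomic parts via the self-isomorphism $([0,1], \lambda)^{\otimes \Nset} \cong ([0,1], \lambda)$. Transferring $\sigma$ and $j$ along $\Psi$ would then yield the desired automorphism $\gamma \in \Aut(\cA \otimes \cL, \varphi \otimes \trace_\lambda)$ together with the canonical embedding $\iota_0(a) = a \otimes \1_\cL$, and the Markov dilation properties would transport verbatim; the identity $R^n = \iota_0^* \gamma^n \iota_0$ would then follow from Markovianity alone via Proposition~\ref{proposition:dilation}.

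The hard part will be the fiberwise isomorphism in the second step: simultaneously and in a jointly measurable fashion identifying all conditional measures $\mathbb{P}(\cdot | x)$ with Lebesgue measure on $[0,1]$ is more delicate than the pointwise standard Borel isomorphism theorem, and requires a measurable selection argument combined with the fact that any standard probability extension of $(X,\mu)$ can be absorbed into $\cA \otimes \cL$. Once this fiberwise isomorphism is in hand, the verification of the dilation identity and the Markov property is purely formal transport, and the theorem would follow.
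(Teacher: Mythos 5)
Your argument is essentially correct, but it takes a genuinely different route from the one the paper relies on. The paper gives no proof of Theorem \ref{theorem:kuemmerer-twosided}: it quotes the result from K\"ummerer's Habilitationsschrift \cite{Ku86} (with the proof reproduced in \cite{KKW20}), and the construction behind it is the ``two-step'' open-systems strategy described in Section \ref{subsection:discuss-classical} --- one first builds by hand a first-order tensor dilation, i.e., an explicit coupling automorphism $\gamma$ of $\cA \otimes \cL$ with $\iota_0^* \gamma \iota_0 = R$ (a point transformation of $X \times [0,1]$ assembled from a measurable partition of the fibres $\{x\}\times[0,1]$ according to the transition probabilities), and then amplifies over a Bernoulli shift on $\cL^{\otimes_{\Zset}} \cong \cL$ to obtain the full Markov dilation. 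You instead start from the Daniell--Kolmogorov path-space realization, where the dilation identity and Markovianity are automatic, and transport everything to $\cA \otimes \cL$ by a measure isomorphism relative to the zero coordinate; the paper explicitly contrasts these two constructions in Section \ref{subsection:discuss-classical}, preferring K\"ummerer's because it puts the information into the automorphism rather than the state (which matters for the subsequent Thompson-group constructions), but for the bare existence statement of Theorem \ref{theorem:kuemmerer-twosided} your route is legitimate. Two caveats on your second step, which is indeed where all the work sits. First, the fibrewise identification you need is Rokhlin's classification of measurable partitions of Lebesgue spaces (a \emph{relative} isomorphism theorem), not merely a measurable selection layered on the Borel isomorphism theorem, and it should be invoked as such. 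Second, your device for ``absorbing atomic parts'' does not work as written: if, say, $R = \mathrm{id}$, then $\mathbb{P}$ is carried by the constant sequences and every conditional measure $\mathbb{P}(\cdot \mid x)$ is a point mass, so no self-isomorphism of $([0,1],\lambda)$ can render the fibres nonatomic. The standard repair is to enlarge the path space to $\big(X^{\Zset} \times [0,1], \mathbb{P} \otimes \lambda\big)$ with the shift acting trivially on the new factor --- this makes every fibre nonatomic without disturbing the dilation identity, the tensor form of $\iota_0\iota_0^*$, or the Markov property --- and only then apply Rokhlin. With these two points made precise your proof goes through, and the reduction of $R^n = \iota_0^* \gamma^n \iota_0$ to the $n=1$ case via Proposition \ref{proposition:dilation} is exactly compatible with the paper's framework.
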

A proof of this result on bilateral commutative stationary Markov processes is contained in \cite{Ku86}. For the convenience of the reader, this proof is made available in \cite{KKW20}, with minor modifications to the unilateral setting of such processes.
This folkore result ensures that, in particular, every transition operator of a commutative stationary Markov process has a dilation of first order, which was the starting assumption of Theorem \ref{theorem:TensorMarkovF}.
Consequently, we can associate to
each classical bilateral stationary Markov process a
representation of the
Thompson group~$F$.
\begin{Theorem}\label{theorem:F-gen-compression}
Let $(\cA,\varphi)$ be a noncommutative probability space where $\cA$ is commutative with separable predual,
and let $R$ be a $\varphi$-Markov map on $\cA$. There exists a probability space $(\cM,\psi)$,
generating representations $\rho_B, \rho_M \colon F \to \Aut(\cM,\psi)$, and an embedding
$\iota \colon (\cA, \varphi) \to (\cM,\psi)$ such that
\begin{enumerate}[$(i)$]\itemsep=0pt
\item
$\iota(\cA) = \cM^{\rho_B(g_0)}$,
\item
$R^n = \iota^* \rho_M(g_0^n)\iota $
for all $n \in \Nset_0$. 	
\end{enumerate}	
\end{Theorem}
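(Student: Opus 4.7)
The plan is to obtain this theorem as a direct consequence of chaining together K\"ummerer's tensor dilation result (Theorem \ref{theorem:kuemmerer-twosided}) and the construction from Theorem \ref{theorem:TensorMarkovF}. The point is that K\"ummerer provides the tensor dilation of first order that is the input hypothesis of Theorem \ref{theorem:TensorMarkovF}, while Theorem \ref{theorem:TensorMarkovF} then packages this into the two representations $\rho_B$ and $\rho_M$ with the required properties.

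First, I would apply Theorem \ref{theorem:kuemmerer-twosided} with $(\cC,\chi) = (\cL,\trace_\lambda)$ to the given $\varphi$-Markov map $R$ on $\cA$. This produces an automorphism $\gamma \in \Aut(\cA \otimes \cL, \varphi \otimes \trace_\lambda)$ together with the canonical embedding $\iota_0 \colon (\cA,\varphi) \to (\cA \otimes \cL, \varphi \otimes \trace_\lambda)$, $a \mapsto a \otimes \1_\cL$, such that
\[
R^n = \iota_0^* \gamma^n \iota_0 \qquad \text{for all } n \in \Nset_0.
\]
In particular, $(\cA \otimes \cL, \varphi \otimes \trace_\lambda, \gamma, \iota_0)$ is a tensor dilation of first order of $R$, so the hypothesis of Theorem \ref{theorem:TensorMarkovF} is met.

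Next, I would feed $\gamma$ into Theorem \ref{theorem:TensorMarkovF}, again with $(\cC,\chi) = (\cL, \trace_\lambda)$. This yields the probability space $(\cM,\psi) = (\cA \otimes \cL^{\otimes_{\Nset_0^2}}, \varphi \otimes \trace_\lambda^{\otimes_{\Nset_0^2}})$, two generating representations $\rho_B, \rho_M \colon F \to \Aut(\cM,\psi)$, and an embedding $\kappa \colon (\cA \otimes \cL, \varphi \otimes \trace_\lambda) \to (\cM,\psi)$ satisfying
\[
\kappa\iota_0(\cA) = \cM^{\rho_B(g_0)} \qquad \text{and} \qquad \iota_0^* \gamma^n \iota_0 = \iota_0^* \kappa^* \rho_M(g_0^n) \kappa \iota_0 \quad \text{for all } n \in \Nset_0.
\]

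Finally, I would set $\iota := \kappa \circ \iota_0 \colon (\cA,\varphi) \to (\cM,\psi)$, which is an embedding as the composition of two embeddings. Property $(i)$ is then immediate from the first identity above, while for $(ii)$ the two displayed identities combine to give
\[
R^n = \iota_0^* \gamma^n \iota_0 = \iota_0^* \kappa^* \rho_M(g_0^n) \kappa \iota_0 = \iota^* \rho_M(g_0^n) \iota.
\]
There is essentially no obstacle here: the substantive work is already carried out in Theorem \ref{theorem:kuemmerer-twosided} (which supplies the classical-to-noncommutative tensor dilation of $R$) and in Theorem \ref{theorem:TensorMarkovF} (which amplifies any such tensor dilation into a representation of $F$ whose generator $\rho_M(g_0)$ compresses to $\gamma$, and hence by a further compression to $R$). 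The present theorem is the formal statement that these two results compose cleanly.
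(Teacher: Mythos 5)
Your proposal is correct and follows exactly the same route as the paper's own proof: apply Theorem \ref{theorem:kuemmerer-twosided} to obtain the tensor dilation $\gamma$ with $R^n = \iota_0^*\gamma^n\iota_0$, feed $\gamma$ into Theorem \ref{theorem:TensorMarkovF} to obtain $(\cM,\psi)$, $\rho_B$, $\rho_M$ and $\kappa$, and set $\iota := \kappa\circ\iota_0$. Nothing is missing.
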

\begin{proof}
By Theorem \ref{theorem:kuemmerer-twosided}, there exists $\gamma\in \Aut(\cA\otimes
\cL,\varphi\otimes \trace_{\lambda})$ such that $(\cA\otimes \cL, \varphi\otimes
\trace_{\lambda}, \gamma, \cA\otimes \1_{\cL})$ is a stationary Markov process, and $R^n = \iota_0^* \, \gamma^n \iota_0$, for all $n \in \Nset_0$, where $\iota_0 \colon (\cA,\varphi) \to (\cA \otimes \cL, \varphi \otimes \trace_\lambda)$
denotes the canonical embedding $\iota_0(a) = a \otimes \1_\cL$.

By Theorem \ref{theorem:TensorMarkovF},
there exists a probability space $(\cM,\psi)$, generating representations
$\rho_B, \rho_M$: $F \to \Aut(\cM,\psi)$, and an embedding
$\kappa \colon (\cA \otimes \cL, \varphi \otimes \chi) \to (\cM,\psi)$ such that
$\kappa(\cA \otimes \1_{\cL}) = \cM^{\rho_B(g_0)}$ and $\iota_0^* \gamma^n \iota_0 = \iota_0^* \kappa^* \rho_M(g_0^n)\kappa \iota_0$ for all $n \in \Nset_0$. The proof is completed by taking $\iota : = \kappa \circ \iota_0$, as we get
\begin{align*}
 R^n = \iota_0^* \gamma^n \iota_0
 = \iota_0^* \kappa^* \rho_M(g_0^n)\kappa \iota_0 = \iota^* \rho_M(g_0^n)\iota \qquad \text{for all}\quad n \in \Nset_0.\tag*{\qed}
\end{align*}
\renewcommand{\qed}{}
\end{proof}

\subsection{Further discussion of the classical case} \label{subsection:discuss-classical}
We illustrate Theorem \ref {theorem:F-gen-compression} for
a classical stationary Markov process taking values in the finite set $[d] := \{1,2, \ldots, d\}$ for some $d \ge 2$,
adapting the classical construction of such processes to our
algebraic approach.

Consider the unital *-algebra
$\cA := \Cset^d \cong
\{f \colon [d] \to \Cset\}$. Then
$\varphi(f) := \sum_{i=1}^d q_i f(i)$
defines a~faithful (normal tracial) state $\varphi$ on $\cA$ if and only if
$\sum_{i=1}^d q_i =1$ and $0 < q_i < 1$ for all $1 \le i \le d$. Now consider
the transition operator $R \colon \cA \to \cA$ given by the matrix
\[
R = \begin{bmatrix}
p_{1,1} & p_{1,2} & \cdots & p_{1,d} \\
p_{2,1} & p_{2,2} & \cdots & p_{2,d} \\
\vdots & \vdots & \ddots & \vdots \\
p_{d,1} & p_{d,2} & \cdots & p_{d,d} \\
\end{bmatrix}
\]
for some $p_{i,j} \in [0,1]$ satisfying
$\sum_{j=1}^d p_{i,j} =1$ for all $i=1, \ldots, d$. One easily verifies that
\begin{align*}
\varphi \circ R = \varphi
\, \Longleftrightarrow \,
\sum_{i=1}^{d} q_i p_{i,j} = q_j \qquad \text{for all}\quad 1 \le j \le d \qquad \text{(stationarity)}.
\end{align*}
The usual Daniell--Kolmogorov construction of a stationary Markov process can now be algebraically reformulated as follows. Here we closely follow the exposition provided in \cite{Ku86}. A~state~$\widetilde{\varphi}$ is defined on the infinite algebraic
tensor product $\bigodot_\Zset \cA$ by
\begin{gather*}
\widetilde{\varphi}
(\cdots \otimes \1_\cA \otimes f_{-m} \otimes
f_{-m+1} \otimes \cdots \otimes f_{n-1} \otimes f_{n} \otimes \1_\cA \otimes \cdots)
\\ \qquad
{}:= \varphi\big(f_{-m} R(f_{-m+1} R(\cdots f_{n-1} R(f_n) \cdots )) \big).
\end{gather*}
This state $\widetilde{\varphi}$ extends to a faithful normal state $\widehat{\varphi}$ on the
von Neumann algebraic tensor product
$\widehat{\cA } :=\bigotimes_\Zset \cA$ such that $\big(\widehat{\cA}, \widehat{\varphi}\big)$ is a noncommutative probability space
(in the sense of Section~\ref{subsection:Markov-maps}).
Furthermore,
the tensor right shift on $\bigodot_\Zset \cA$ extends to an
automorphism $\widehat{\cA}$ of
$\big(\widehat{\cA},\widehat{\varphi}\big)$.
Finally, let $\widehat{\iota}_{DK}\colon \cA \to \widehat{\cA}$ denote the injection
which canonically embeds $f \in \cA$ into the $0$-th position of the infinite tensor product $\widehat{\cA } =\bigotimes_\Zset \cA$.
Then it can be
verified that $\big(\widehat{\cA}, \widehat{\varphi}, \widehat{T},\widehat{\iota}_{DK}(\cA)\big)$ is a~minimal stationary Markov process (in
the sense of Definition~\ref{definition:ncms}).

However, the Daniell--Kolmogorov construction does not seem to accommodate
a representation $\widehat{\rho} \colon F \to \Aut\big(\widehat{A}, \widehat{\varphi}\big)$ with $\widehat{\rho}(g_0) =
\widehat{T}$ which satisfies the additional
localization property $\widehat{\iota}_{DK}(\cA) \subset \widehat{\cA}^{\widehat{\rho}(g_n)}$ for $n \ge 1$.
This observation is connected to the well-known fact that the Daniell--Kolmogorov construction puts all information about a stochastic process into the state~$\widehat{\varphi}$, while the
automorphism $\widehat{T}$ is simply implemented by a bilateral tensor shift.

 Fortunately, K\"ummerer's approach to the construction of stationary Markov processes is more feasible for finding representations of the Thompson group $F$
 with properties as addressed above. This open dynamical system approach is alternative to the Daniell--Kolmogorov construction in classical probability;
 and it is actually independent
 of it for finite-set-valued processes. As explained in~\cite{Ku86}, this alternative approach
 provides a construction which puts some information of the stationary Markov process into the automorphism while simplifying the state
 (see Theorem~\ref{theorem:kuemmerer-twosided}).
 More specifically, this
 strategy divides the construction into two steps.
 One first tries to construct a dilation of first order, and then one attempts in a second step to extend this first-order dilation to a full (Markov) dilation (see Section~\ref{subsection:Noncommutative Stationary Processes}). In fact,
 as already observed in Section~\ref{subsection:constr-rep-F},
 this two-step strategy can be further extended to construct a representation of the Thompson group $F$ which encodes the Markovianity of the given stationary process.
 Let us further discuss this alternative construction for a tensor dilation for the present example $\big(\cA = \Cset^d, \varphi\big)$ with transition operator $R$ on $\cA$. For
 this purpose,
 recall Notation \ref{notation:lebesgue}.
 Similar as done for the case $d=2$ in \cite[Example~3.4.3]{KKW20}
 and as detailed in~\cite{Ku86}, one can construct an automorphism $\gamma \in
\Aut(\cA \otimes \cL, \varphi \otimes \trace_\lambda)$ such that the $\varphi$-Markov map~$R$
on $\cA$ has the dilation of first order $(\cA \otimes \cL, \varphi \otimes \trace_{\lambda}, \gamma, \iota_0)$. As before, $\iota_0$~denotes the canonical embedding of $(\cA, \varphi)$ into $(\cA \otimes \cL, \varphi \otimes \trace_{\lambda})$. In other words, the diagram\looseness=1
	\begin{equation} \label{eq:cd-first-order}
		\begin{tikzcd}
	(\cA, \varphi) \arrow[r, "R"] \arrow[d, "\iota_0"]
	& (\cA, \varphi) \arrow[d, leftarrow, "\iota_0^*"] \\
	(\cA \otimes \cL, \varphi \otimes \trace_{\lambda}) \arrow[r, "\gamma"]
	& (\cA \otimes \cL, \varphi \otimes \trace_{\lambda})
	\end{tikzcd}
	\end{equation}
commutes.

\begin{Remark}
All information about the $\varphi$-Markov map $R$ on $\cA$ is contained in the $\varphi \otimes \trace_\lambda$-preserving automorphism $\gamma$ on $\cA \otimes \cL$. Generally, $\cA_\Zset := \bigvee_{n\in \Zset}\gamma^n(\cA \otimes \1_{\cL})$ is
strictly contained in $\cA \otimes \cL$. In other words, Theorem \ref{theorem:kuemmerer-twosided} provides a non-minimal stationary Markov process, in general. Actually, our first step in the construction of a representation of the Thompson group~$F$ consists in finding a suitable dilation of first order \eqref{eq:cd-first-order}. K\"ummerer's Theorem \ref{theorem:kuemmerer-twosided} guarantees the existence of such dilations. However, we refrain from further discussing the structure of these dilations of first order, as this would go beyond the scope of the present paper.\looseness=1
\end{Remark}

Having arrived at this dilation of first order, several straightforward constructions of stationary Markov processes are possible. Here we discuss those which are of relevance for obtaining unilateral and bilateral versions of stationary Markov processes, in particular with the view of obtaining suitable representations of the Thompson group $F$, and its monoid $F^+$, as introduced in \eqref{eq:F+}.

A unilateral noncommutative stationary Markov process
$\big(\tcM,\tpsi,\talpha_0,\tiota(\cA)\big)$ is
obtained by putting
$\big(\tcM, \tpsi\big) := \big(\cA \otimes \cL^{\otimes_{\Nset_0}^{}}_{},
\varphi \otimes \trace_\lambda^{\otimes_{\Nset_0}^{}}\big)
$ with $\talpha_0 := \tgamma_0 \tbeta_0$, where
\begin{gather*}
\tbeta_0(f \otimes x_0 \otimes x_1 \otimes \cdots) := f \otimes \1_\cL \otimes x_0 \otimes x_1 \otimes \cdots, \\
\tgamma_0(f \otimes x_0 \otimes x_1 \otimes \cdots) := \gamma(f \otimes x_0) \otimes x_1 \otimes \cdots, \\
\tiota(f) := f \otimes \1_\cL \otimes \1_{\cL} \otimes \cdots
\end{gather*}
for $f \in \cA$, $x_0, x_1, \ldots \in \cL$. This
construction was the subject of \cite{KKW20}, as it
allows to introduce the representations $\trho_B$ and $\trho_M$ of the Thompson monoid $F^+$ by putting
\begin{align}
\trho_B(g_k) &:= \tbeta_k \qquad\text{for}\quad k\ge 0, \label{eq:ex-uni-bernoulli}\\
\trho_M(g_k) &:= \begin{cases}
 \talpha_0 & \text{for}\ k=0,
 \\
 \tbeta_k & \text{for}\ k>0,
 \end{cases} \label{eq:ex-uni-markov}
\end{align}
with
$\tbeta_k(f \otimes x_0 \otimes \cdots \otimes x_{k-1} \otimes x_k \otimes x_{k+1} \otimes \cdots) := f \otimes x_0 \otimes \cdots \otimes x_{k-1} \otimes \1_{\cL} \otimes x_{k} \otimes \cdots$. It is now elementary to verify the relations
\begin{gather}
 \tbeta_k \tbeta_\ell = \tbeta_{\ell+1} \tbeta_k, \qquad 0 \le k \le \ell < \infty,\nonumber
\\
 \talpha_k \talpha_\ell = \talpha_{\ell+1} \talpha_k, \qquad 0 \le k < \ell < \infty. \label{eq:beta-end}
\end{gather}
The choices made in \eqref{eq:ex-uni-bernoulli}
are canonical for the partial shifts $\tbeta_k$ (see also \cite{EGK17,KKW20}).
The choice made in \eqref{eq:ex-uni-markov} is
also canonical from the dynamical systems viewpoint of constructing a stationary Markov process as a
local perturbation of a Bernoulli shift. But of course, other choices are possible for $\trho_M(g_k)$ for $k\ge 1$, respecting the
localization property $\tiota(\cA) \subset \tcM^{\rho_M(g_k)}$, without violating the
relations of the Thompson monoid $F^+$
 (see also \cite[Section~5.3]{KKW20}).
This construction is nicely illustrated in Figure \ref{figure:graphs-onesided} with actions of injective maps on the set $\{\blacksquare\}\sqcup \Nset_0$. Here the set~$\{\blacksquare\}$ pictures the algebra $\cA$ (or an element of it), $\bullet$ pictures a~copy of the algebra $\cL$ (or an element of it), and disjoint unions of sets correspond to tensor products in the algebraic formulation. Now the action of the partial shifts $\tbeta_0$ and $\tbeta_1$ become injective maps on the set
$\{\blacksquare\}\sqcup \Nset_0$ which can be visualized by blue arrows. Furthermore, the
action of the local automorphism $\tgamma_0$ is visualized by a bijection on $\{\blacksquare\}\sqcup \Nset_0$ which moves only those elements inside the red ellipse, as indicated in red colour in Figure \ref{figure:graphs-onesided}. A similar visualization is immediate for the actions of $\tbeta_k$ for $k >1$. We finally note for
Figure \ref{figure:graphs-onesided} that
$\circ$ visualizes the one-dimensional subalgebra
$\Cset \1_\cL \subset \cL$ (or its element $\1_\cL$) which is actually given by the empty set $\varnothing$ on the level of sets. Here we could have omitted these
isomorphic embeddings for our visualization, but these embeddings will guide our consecutive amplifications, in particular as relevant for canonically constructing representations of $F$. As it can be clearly seen in Figure \ref{figure:graphs-onesided}, the set $\{\blacksquare\}\sqcup \Nset_0$ is invariant for the injections which visualize the actions of $\tilde{\beta}_k-s$ and $\tgamma_0$.
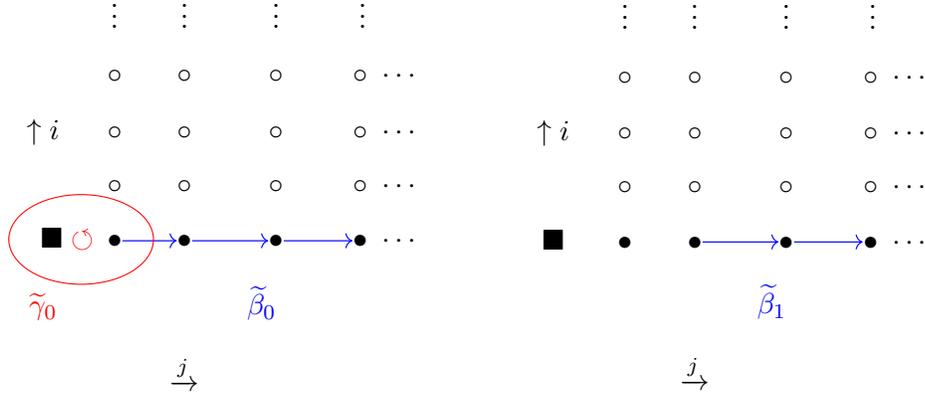
\begin{figure}[t!]\centering
\begin{tikzcd}[column sep=0.8em, row sep=0.5em,execute at end picture={
 \node[mynode, fit={(\tikzcdmatrixname-6-1) (\tikzcdmatrixname-6-2)}]{};
 }]
& \vdots & \vdots& \vdots & \vdots \phantom{\,\,\cdots} \\[-3pt]
&
\\
& \circ
& \circ
& \circ
& \circ \,\, \cdots \\
\uparrow{i} & \circ
& \circ
& \circ
& \circ \,\, \cdots \\
 & \circ
& \circ
& \circ
& \circ \,\, \cdots\\
\blacksquare \color{red} \hspace{0pt} \circlearrowleft \hspace{-18pt}
& \bullet \arrow[r, blue, start anchor={[xshift=-1ex]}, end anchor={[xshift=1ex]}]
& \bullet \arrow[r, blue, start anchor={[xshift=-1ex]}, end anchor={[xshift=1ex]}]
& \bullet \arrow[r, blue, start anchor={[xshift=-1ex]}, end anchor={[xshift=1ex]}]
& \bullet \,\, \cdots \\
 \color{red} \tgamma_0 & & & {\color{blue} \tbeta_0 \quad} & \\
 & & \xrightarrow{j} & &
\end{tikzcd} \qquad \quad
\begin{tikzcd}[column sep=0.8em, row sep=0.5em]
& \vdots & \vdots& \vdots & \vdots \phantom{\,\,\cdots} \\[-3pt]
&
\\
& \circ
& \circ
& \circ
& \circ \,\, \cdots \\
\uparrow{i} & \circ
& \circ
& \circ
& \circ \,\, \cdots \\
 & \circ
& \circ
& \circ
& \circ \,\, \cdots\\
\blacksquare
& \bullet
& \bullet \arrow[r, blue, start anchor={[xshift=-1ex]}, end anchor={[xshift=1ex]}]
& \bullet \arrow[r, blue, start anchor={[xshift=-1ex]}, end anchor={[xshift=1ex]}]
& \bullet \,\, \cdots \\
 & & & {\color{blue} \tbeta_1 \quad} & \\
 & & \xrightarrow{j} & &
\end{tikzcd}
\caption{Visualization
on the set $\{\blacksquare\}\sqcup \Nset_0$ of the action of the one-sided Bernoulli shift $\tbeta_0$ (blue, left), and the local automorphism $\tgamma_0$ (red, left) and the action of the one-sided Bernoulli shift $\tbeta_1$ (blue, right).}
\label{figure:graphs-onesided}
\end{figure}

Next, we extend the unilateral stationary Markov process $\big(\tcM,\tpsi,\talpha_0,\tiota(\cA)\big)$ to the
bilateral stationary Markov process $\big(\hcM,\hpsi,\halpha_0,\hiota(\cA)\big)$
by putting $(\hcM, \hpsi) := \big(\cA \otimes \cL^{\otimes_{\Zset}^{}}_{},
\varphi \otimes \trace_\lambda^{\otimes_{\Zset}^{}}\big)$ with $\halpha_0 := \hgamma_0 \hbeta_0$, where
\begin{gather*}
\hbeta_0\left(\cdots \otimes x_{-1} \otimes
\begin{pmatrix} x_{0}\\ \otimes\\ f\end{pmatrix}
\otimes x_1 \otimes \cdots\right) := \cdots \otimes x_{-2} \otimes
\begin{pmatrix} x_{-1}\\ \otimes\\ f\end{pmatrix}
\otimes x_0 \otimes \cdots, \\
\hgamma_0\left(\cdots \otimes x_{-1} \otimes
\begin{pmatrix} x_{0}\\ \otimes\\ f\end{pmatrix}
\otimes x_1 \otimes \cdots \right) := \cdots \otimes x_{-1} \otimes
 \gamma_0 \begin{pmatrix} x_{0}\\ \otimes\\ f\end{pmatrix}
\otimes x_1 \otimes \cdots,\\
\hiota(f) := \cdots \otimes \1_{\cL} \otimes
\begin{pmatrix} \1_{\cL}\\ \otimes\\ f\end{pmatrix}
\otimes \1_{\cL} \otimes \cdots
\end{gather*}
for $f \in \cA$, $\ldots, x_{-1},x_0, x_1, \ldots \in \cL$.
Considering the automorphism $\halpha_0$
as a canonical bilateral extension of the endomorphism $\talpha_0$, we are interested in identifying bilateral extensions of the other endomorphisms $\talpha_1, \talpha_2, \ldots$ to automorphisms of
$\big(\hcM,\hpsi\,\big)$, now satisfying the relations of the Thompson group~$F$.
But this seems to be impossible, as
$\big(\hcM,\hpsi\,\big)$ provides ``too little space'' for accommodating such automorphisms. This is illustrated in Figure~\ref{figure:graphs-twosided} again on the level of the set $\{\blacksquare\} \sqcup \Zset$, when visualized as an appropriate
subset of $\{\blacksquare\} \sqcup \Nset_0^2$.
Note that we have made a particular choice of how to embed $\{\blacksquare\} \sqcup \Zset$ into $\{\blacksquare\} \sqcup \Nset_0^2$, and there are many other interesting possibilities for choosing such an embedding.
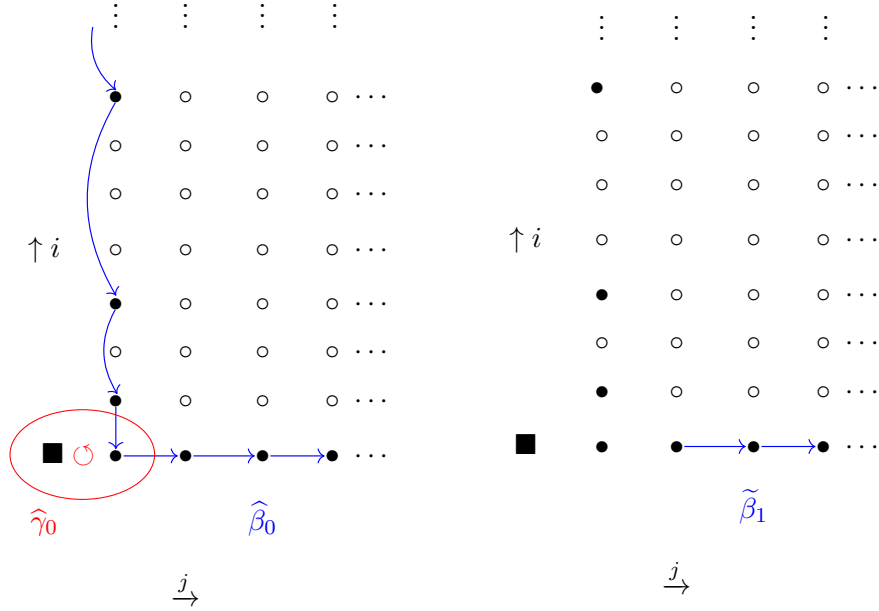
\begin{figure}[t!]\centering
\begin{tikzcd}[column sep=0.8em, row sep=0.5em, execute at end picture={
 \node[mynode, fit={(\tikzcdmatrixname-10-1) (\tikzcdmatrixname-10-2)}]{};
 }]
& \vdots & \vdots& \vdots & \vdots \phantom{\,\,\cdots} \\[-3pt]
& \arrow[d, blue, start anchor={[xshift=-1ex,yshift=3.2ex]}, end anchor={[xshift=0.8ex,yshift=-0.8ex]}, bend right] & & & \\
& \bullet \arrow[dddd, blue, start anchor={[xshift=0.8ex,yshift=1ex]}, end anchor={[xshift=0.8ex, yshift=-0.7ex]}, bend right]
& \circ
& \circ
& \circ \,\, \cdots \\
& \circ
& \circ
& \circ
& \circ \,\, \cdots \\
& \circ
& \circ
& \circ
& \circ \,\, \cdots \\
\uparrow{i} & \circ
& \circ
& \circ
& \circ \,\, \cdots \\
 & \bullet \arrow[dd, blue, start anchor={[xshift=0.8ex,yshift=1ex]}, end anchor={[xshift=0.9ex,yshift=-0.8ex]}, bend right]
& \circ
& \circ
& \circ \,\, \cdots \\
 & \circ
& \circ
& \circ
& \circ \,\, \cdots \\
 & \bullet \arrow[d, blue, start anchor={[yshift=1ex]}, end anchor={[yshift=-0.8ex]}]
& \circ
& \circ
& \circ \,\, \cdots\\
\blacksquare \color{red} \hspace{0pt} \circlearrowleft \hspace{-18pt}
& \bullet \arrow[r, blue, start anchor={[xshift=-1ex]}, end anchor={[xshift=1ex]}]
& \bullet \arrow[r, blue, start anchor={[xshift=-1ex]}, end anchor={[xshift=1ex]}]
& \bullet \arrow[r, blue, start anchor={[xshift=-1ex]}, end anchor={[xshift=1ex]}]
& \bullet \,\, \cdots \\
 \color{red} \hgamma_0 & & & \color{blue} \hbeta_0& \\
 & & \xrightarrow{j} &&
\end{tikzcd} \qquad \quad
\begin{tikzcd}[column sep=0.8em, row sep=0.5em]
& \vdots & \vdots& \vdots & \vdots \phantom{\,\,\cdots} \\[-3pt]
& & & & \\
& \bullet \
& \circ
& \circ
& \circ \,\, \cdots \\
& \circ
& \circ
& \circ
& \circ \,\, \cdots \\
& \circ
& \circ
& \circ
& \circ \,\, \cdots \\
\uparrow{i} & \circ
& \circ
& \circ
& \circ \,\, \cdots \\
& \bullet
& \circ
& \circ
& \circ \,\, \cdots \\
& \circ
& \circ
& \circ
& \circ \,\, \cdots \\
 & \bullet
& \circ
& \circ
& \circ \,\, \cdots\\
\blacksquare
& \bullet
& \bullet \arrow[r, blue, start anchor={[xshift=-1ex]}, end anchor={[xshift=1ex]}]
& \bullet \arrow[r, blue, start anchor={[xshift=-1ex]}, end anchor={[xshift=1ex]}]
& \bullet \,\, \cdots \\
 & & & \color{blue} \tbeta_1& \\
 & & \xrightarrow{j} &&
\end{tikzcd}
\caption{Visualization on the set $\{\blacksquare\} \sqcup \Zset$ of the action of the two-sided Bernoulli shift $\hbeta_0$ and the
local automorphism $\hgamma_0$ and of the ``inability'' to extend $\tbeta_1$
from $\{\blacksquare\} \sqcup \Nset_0$ to an automorphism $\hbeta_1$ on $\{\blacksquare\} \sqcup \Zset$ such that the relations of $F$ are satisfied.}
\label{figure:graphs-twosided}
\end{figure}
This challenge to provide sufficient space for properly extending all partial shifts
$\big\{\tbeta_k \mid k \ge 0\big\} \subset \big(\tcM,\tvarphi\big)$ is overcome by choosing
\[
(\mathcal{M}, \psi) = \big(\mathcal{A}\otimes \mathcal{L}^{\otimes_{\mathbb{N}_0^2}}, \varphi \otimes \trace_{\lambda}^{\otimes_{\mathbb{N}_0^2}} \big)
\]
with the canonical embedding
$\iota \colon (\cA,\varphi) \to (\cM, \psi)$ given by
$\iota(a) := a \otimes \big(\bigotimes_{(i,j)\in \Nset_0^2} \1_{\cL}\big)$.
This approach has already been detailed in the illustrative example of Section~\ref{subsection:Example}. For the convenience of the reader, let us repeat how
the partial shifts $\tbeta_k$ and the local automorphism $\tgamma_0$ on $\tcM$ are extended to automorphisms on $\cM$:
\[
\beta_0\biggl(a \otimes \biggl(\bigotimes_{(i,j)\in \Nset_0^2} x_{i,j}\biggr)\biggr):= a \otimes \biggl(\bigotimes_{(i,j)\in \Nset_0^2} y_{i,j}\biggr)\qquad
\text{with}\quad
y_{i,j} = \begin{cases}
x_{2i+1,j} & \text{if}\ j=0, \\
x_{2i,j-1} & \text{if}\ j=1, \\
x_{i,j-1} & \text{if}\ j \geq 2,
\end{cases}
\]
and, for $k \in \Nset$,
\[
\beta_k\biggl(a \otimes \biggl(\bigotimes_{(i,j)\in \Nset_0^2} x_{i,j}\biggr)\biggr):= a \otimes \biggr(\bigotimes_{(i,j)\in \Nset_0^2} y_{i,j}\biggr)
\qquad\text{with}\quad
y_{i,j} = \begin{cases}
x_{i,j} & \text{if}\ j\le k-1, \\
x_{2i+1,j} & \text{if}\ j=k, \\
x_{2i,j-1} & \text{if}\ j=k+1, \\
x_{i,j-1} & \text{if}\ j \geq k+1.
\end{cases}
\]
Furthermore, the local perturbation $\gamma \in \Aut(\cA, \cL)$ is amplified to
\[
\gamma_0\biggl(a \otimes \biggl(\bigotimes_{(i,j)\in \Nset_0^2} x_{i,j}\biggr)\biggr) = \gamma(a \otimes x_{00}) \otimes \biggl(\bigotimes_{(i,j)\in \Nset_0^2 \setminus \{(0,0)\}} x_{i,j}\biggr).
\]
 We refer the reader to Figure \ref{figure:graphs-twodim} for a
visualization of the action of the two-sided shifts $\beta_0$, $\beta_1$ and the action of the local automorphism $\gamma_0$.

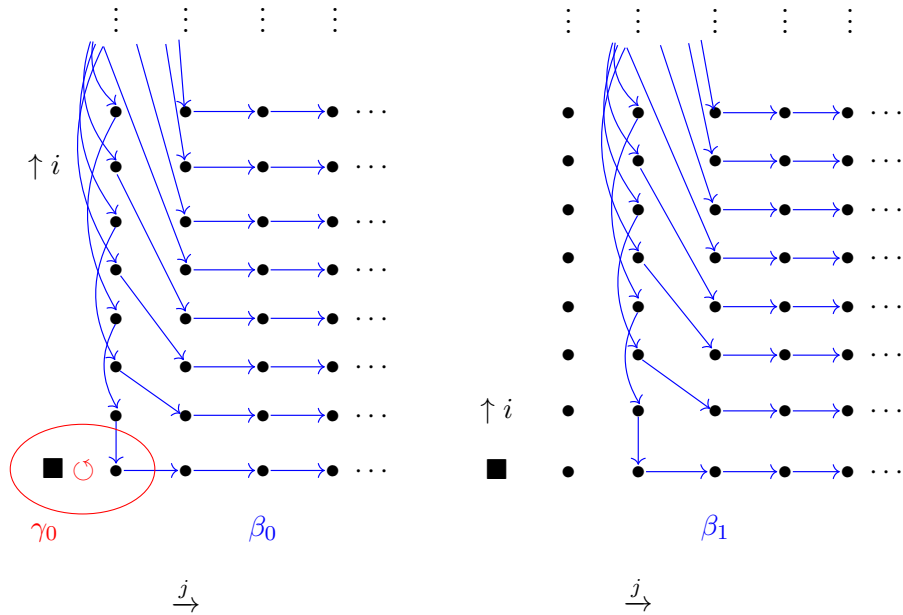
\begin{figure}[h]\centering
\begin{tikzcd}[column sep=0.8em, row sep=0.5em, execute at end picture={
 \node[mynode, fit={(\tikzcdmatrixname-10-1) (\tikzcdmatrixname-10-2)}]{};
 }]
& \vdots & \vdots& \vdots & \vdots \phantom{\,\,\cdots} \\[1pt]
& \arrow[ddddr, blue, start anchor={[xshift=-1.5ex,yshift=2.8ex]}, end anchor={[xshift=0.5ex,yshift=-0.8ex]}] \arrow[dddd, blue, start anchor={[xshift=-0.2ex,yshift=2.8ex]}, end anchor={[xshift=0.8ex,yshift=-0.8ex]}, bend right] \arrow[dddr, blue, start anchor={[xshift=1ex,yshift=3ex]}, end anchor={[xshift=0.5ex,yshift=-0.8ex]}] \arrow[ddd, blue, start anchor={[xshift=-0.8ex,yshift=3ex]}, end anchor={[xshift=0.8ex,yshift=-0.8ex]}, bend right] \arrow[ddr, blue, start anchor={[xshift=3ex,yshift=3ex]}, end anchor={[xshift=0.8ex,yshift=-0.8ex]}]
\arrow[dd, blue, start anchor={[xshift=-1.2ex,yshift=3.2ex]}, end anchor={[xshift=0.8ex,yshift=-0.8ex]}, bend right]
\arrow[d, blue, start anchor={[xshift=-1ex,yshift=3.2ex]}, end anchor={[xshift=0.8ex,yshift=-0.8ex]}, bend right] \arrow[dr, blue, start anchor={[xshift=4ex,yshift=2.6ex]}, end anchor={[xshift=1.5ex,yshift=-0.8ex]}] & & & \\
 & \bullet \arrow[dddd, blue, start anchor={[xshift=0.8ex,yshift=1ex]}, end anchor={[xshift=0.8ex, yshift=-0.7ex]}, bend right]
& \bullet \arrow[r, blue, start anchor={[xshift=-1ex]}, end anchor={[xshift=1ex]}]
& \bullet \arrow[r, blue, start anchor={[xshift=-1ex]}, end anchor={[xshift=1ex]}]
& \bullet \,\, \cdots \\
\uparrow{i} & \bullet \arrow[dddr, blue, start anchor={[xshift=-0.6ex,yshift=0.9ex]}, end anchor={[xshift=0.6ex,yshift=-0.8ex]}]
& \bullet \arrow[r, blue, start anchor={[xshift=-1ex]}, end anchor={[xshift=1ex]}]
& \bullet \arrow[r, blue, start anchor={[xshift=-1ex]}, end anchor={[xshift=1ex]}]
& \bullet \,\, \cdots \\
& \bullet \arrow[ddd, blue, start anchor={[xshift=0.8ex, yshift=1ex]}, end anchor={[xshift=0.7ex,yshift=-0.8ex]}, bend right]
& \bullet \arrow[r, blue, start anchor={[xshift=-1ex]}, end anchor={[xshift=1ex]}]
& \bullet \arrow[r, blue, start anchor={[xshift=-1ex]}, end anchor={[xshift=1ex]}]
& \bullet \,\, \cdots \\
& \bullet \arrow[ddr, blue, start anchor={[xshift=-0.7ex,yshift=1ex]}, end anchor={[xshift=0.8ex,yshift=-0.8ex]}]
& \bullet \arrow[r, blue, start anchor={[xshift=-1ex]}, end anchor={[xshift=1ex]}]
& \bullet \arrow[r, blue, start anchor={[xshift=-1ex]}, end anchor={[xshift=1ex]}]
& \bullet \,\, \cdots \\
& \bullet \arrow[dd, blue, start anchor={[xshift=0.8ex,yshift=1ex]}, end anchor={[xshift=0.9ex,yshift=-0.8ex]}, bend right]
& \bullet \arrow[r, blue, start anchor={[xshift=-1ex]}, end anchor={[xshift=1ex]}]
& \bullet \arrow[r, blue, start anchor={[xshift=-1ex]}, end anchor={[xshift=1ex]}]
& \bullet \,\, \cdots \\
& \bullet \arrow[dr, blue, start anchor={[xshift=-1.3ex,yshift=0.8ex]}, end anchor={[xshift=1.2ex,yshift=-1.0ex]}]
& \bullet \arrow[r, blue, start anchor={[xshift=-1ex]}, end anchor={[xshift=1ex]}]
& \bullet \arrow[r, blue, start anchor={[xshift=-1ex]}, end anchor={[xshift=1ex]}]
& \bullet \,\, \cdots \\
 & \bullet \arrow[d, blue, start anchor={[yshift=1ex]}, end anchor={[yshift=-0.8ex]}]
& \bullet \arrow[r, blue, start anchor={[xshift=-1ex]}, end anchor={[xshift=1ex]}]
& \bullet \arrow[r, blue, start anchor={[xshift=-1ex]}, end anchor={[xshift=1ex]}]
& \bullet \,\, \cdots\\
\blacksquare \color{red} \hspace{0pt} \circlearrowleft \hspace{-18pt}
& \bullet \arrow[r, blue, start anchor={[xshift=-1ex]}, end anchor={[xshift=1ex]}]
& \bullet \arrow[r, blue, start anchor={[xshift=-1ex]}, end anchor={[xshift=1ex]}]
& \bullet \arrow[r, blue, start anchor={[xshift=-1ex]}, end anchor={[xshift=1ex]}]
& \bullet \,\, \cdots \\
 \color{red} \gamma_0 & & & \color{blue} \beta_0& \\
 & & \xrightarrow{j} &&
\end{tikzcd} \quad \quad
\begin{tikzcd}[column sep=0.8em, row sep=0.5em]
& \vdots & \vdots & \vdots& \vdots & \vdots \phantom{\,\,\cdots} \\[1pt]
& & \arrow[ddddr, blue, start anchor={[xshift=-1.5ex,yshift=2.8ex]}, end anchor={[xshift=0.5ex,yshift=-0.8ex]}] \arrow[dddd, blue, start anchor={[xshift=-0.2ex,yshift=2.8ex]}, end anchor={[xshift=0.8ex,yshift=-0.8ex]}, bend right] \arrow[dddr, blue, start anchor={[xshift=1ex,yshift=3ex]}, end anchor={[xshift=0.5ex,yshift=-0.8ex]}] \arrow[ddd, blue, start anchor={[xshift=-0.8ex,yshift=3ex]}, end anchor={[xshift=0.8ex,yshift=-0.8ex]}, bend right] \arrow[ddr, blue, start anchor={[xshift=3ex,yshift=3ex]}, end anchor={[xshift=0.8ex,yshift=-0.8ex]}]
\arrow[dd, blue, start anchor={[xshift=-1.2ex,yshift=3.2ex]}, end anchor={[xshift=0.8ex,yshift=-0.8ex]}, bend right]
\arrow[d, blue, start anchor={[xshift=-1ex,yshift=3.2ex]}, end anchor={[xshift=0.8ex,yshift=-0.8ex]}, bend right] \arrow[dr, blue, start anchor={[xshift=4ex,yshift=2.6ex]}, end anchor={[xshift=1.5ex,yshift=-0.8ex]}] & & & \\
&\bullet & \bullet \arrow[dddd, blue, start anchor={[xshift=0.8ex,yshift=1ex]}, end anchor={[xshift=0.8ex, yshift=-0.7ex]}, bend right]
& \bullet \arrow[r, blue, start anchor={[xshift=-1ex]}, end anchor={[xshift=1ex]}]
& \bullet \arrow[r, blue, start anchor={[xshift=-1ex]}, end anchor={[xshift=1ex]}]
& \bullet \,\, \cdots \\
&\bullet & \bullet \arrow[dddr, blue, start anchor={[xshift=-0.6ex,yshift=0.9ex]}, end anchor={[xshift=0.6ex,yshift=-0.8ex]}]
& \bullet \arrow[r, blue, start anchor={[xshift=-1ex]}, end anchor={[xshift=1ex]}]
& \bullet \arrow[r, blue, start anchor={[xshift=-1ex]}, end anchor={[xshift=1ex]}]
& \bullet \,\, \cdots \\
&\bullet & \bullet \arrow[ddd, blue, start anchor={[xshift=0.8ex, yshift=1ex]}, end anchor={[xshift=0.7ex,yshift=-0.8ex]}, bend right]
& \bullet \arrow[r, blue, start anchor={[xshift=-1ex]}, end anchor={[xshift=1ex]}]
& \bullet \arrow[r, blue, start anchor={[xshift=-1ex]}, end anchor={[xshift=1ex]}]
& \bullet \,\, \cdots \\
&\bullet & \bullet \arrow[ddr, blue, start anchor={[xshift=-0.7ex,yshift=1ex]}, end anchor={[xshift=0.8ex,yshift=-0.8ex]}]
& \bullet \arrow[r, blue, start anchor={[xshift=-1ex]}, end anchor={[xshift=1ex]}]
& \bullet \arrow[r, blue, start anchor={[xshift=-1ex]}, end anchor={[xshift=1ex]}]
& \bullet \,\, \cdots \\
&\bullet & \bullet \arrow[dd, blue, start anchor={[xshift=0.8ex,yshift=1ex]}, end anchor={[xshift=0.9ex,yshift=-0.8ex]}, bend right]
& \bullet \arrow[r, blue, start anchor={[xshift=-1ex]}, end anchor={[xshift=1ex]}]
& \bullet \arrow[r, blue, start anchor={[xshift=-1ex]}, end anchor={[xshift=1ex]}]
& \bullet \,\, \cdots \\
&\bullet & \bullet \arrow[dr, blue, start anchor={[xshift=-1.3ex,yshift=0.8ex]}, end anchor={[xshift=1.2ex,yshift=-1.0ex]}]
& \bullet \arrow[r, blue, start anchor={[xshift=-1ex]}, end anchor={[xshift=1ex]}]
& \bullet \arrow[r, blue, start anchor={[xshift=-1ex]}, end anchor={[xshift=1ex]}]
& \bullet \,\, \cdots \\
\uparrow{i} &\bullet & \bullet \arrow[d, blue, start anchor={[yshift=1ex]}, end anchor={[yshift=-0.8ex]}]
& \bullet \arrow[r, blue, start anchor={[xshift=-1ex]}, end anchor={[xshift=1ex]}]
& \bullet \arrow[r, blue, start anchor={[xshift=-1ex]}, end anchor={[xshift=1ex]}]
& \bullet \,\, \cdots\\
\blacksquare
&\bullet & \bullet \arrow[r, blue, start anchor={[xshift=-1ex]}, end anchor={[xshift=1ex]}]
& \bullet \arrow[r, blue, start anchor={[xshift=-1ex]}, end anchor={[xshift=1ex]}]
& \bullet \arrow[r, blue, start anchor={[xshift=-1ex]}, end anchor={[xshift=1ex]}]
& \bullet \,\, \cdots \\
 & & & \color{blue} \beta_1 & \\
 & &\xrightarrow{j} &&
\end{tikzcd} \qquad \quad
\caption{Visualization on the set $\{\blacksquare\} \sqcup \Nset_0^2$ of the action of the two-sided Bernoulli shift $\beta_0$, the local automorphism $\gamma_0$, and the two-sided Bernoulli shift $\beta_1$.}
\label{figure:graphs-twodim}
\end{figure}

We address $\{\beta_k \mid k \ge 0\}$ as a \emph{canonical extension} of the family $\big\{\tbeta_k \mid k \ge 0\big\}$. Of course, there are many other interesting possibilities to arrive at suitable extensions. Now the multiplicative extension of the automorphisms
\begin{gather*}
\rho_B(g_k) := \beta_k \quad\text{for}\quad k\ge 0,\qquad 
\rho_M(g_k) := \begin{cases}
 \alpha_0 :=\gamma_0 \beta_0 & \text{for}\ k=0,
 \\
 \alpha_k := \beta_k & \text{for}\ k>0,
 \end{cases}
\end{gather*}
provides us with two representations $\rho_B, \rho_M \colon F \to \Aut(\cM,\psi)$, as it is elementary to verify the relations
\begin{gather}
 \beta_k \beta_\ell = \beta_{\ell+1} \beta_k,
 \qquad\ 0 \le k < \ell < \infty,\nonumber
 \\
 \alpha_k \alpha_\ell = \alpha_{\ell+1} \alpha_k,
 \qquad 0 \le k < \ell < \infty.
 \label{eq:beta-aut}
\end{gather}
Note that \eqref{eq:beta-aut} fails to be valid for $k = \ell$, in contrast to the relations for the partial shifts~$\tbeta_k$
in \eqref{eq:beta-end}.
We have already verified in Proposition~\ref{proposition:MarkovTwoReps2} that
$(\cM, \psi, \alpha_0, \iota(\cA))$ is a bilateral noncommutative Markov process.

The above discussion has provided additional background information on the ideas underlying Theorem \ref{theorem:F-gen-compression}, and on its proof strategy.

\subsection*{Acknowledgements}

The second author was partially supported by a Government of
Ireland Postdoctoral Fellowship (Project ID: GOIPD/2018/498).
Both authors acknowledge several helpful discussions
with B.V.~Rajarama Bhat in an early stage of this project.
Also the first author would like to thank Persi Diaconis, Gwion Evans, Rolf Gohm, Burkhard K\"ummerer
and Hans Maassen for several fruitful discussions on Markovianity.
Both authors thank the organizers of the conference
\emph{Non-commutative algebra, Probability and Analysis in Action}
held at Greifswald in September 2021 in honour of Michael
Sch\"urmann. The authors also thank the anonymous referees for their comments.


\pdfbookmark[1]{References}{ref}
\LastPageEnding

\end{document}